\documentclass[12pt]{amsart}

\usepackage{a4wide}

\usepackage{enumerate}
\usepackage{xcolor}
\usepackage{amssymb} 
\usepackage{tikz}
\usepackage{array,multirow}
\usetikzlibrary{arrows.meta, positioning, calc}
\newcommand{\circleinnersep}{3}
\newcommand{\nodedistance}{1.2}

\newtheorem{theorem}{Theorem}[section]
\newtheorem{lemma}[theorem]{Lemma}
\newtheorem{proposition}[theorem]{Proposition}

\newtheorem{theo}{Theorem}

\theoremstyle{definition}
\newtheorem{remark}[theorem]{Remark}
\newtheorem{example}[theorem]{Example}

\newcommand{\C}{\ensuremath{\mathbb{C}}}
\newcommand{\R}{\ensuremath{\mathbb{R}}}
\renewcommand{\H}{\ensuremath{\mathbb{H}}}
\newcommand{\g}[1]{\ensuremath{\mathfrak{#1}}}
\newcommand{\s}[1]{\ensuremath{\mathsf{#1}}}
\newcommand{\cal}[1]{\ensuremath{\mathcal{#1}}}

\DeclareMathOperator{\codim}{codim}
\DeclareMathOperator{\Ad}{Ad}
\DeclareMathOperator{\ad}{ad}
\DeclareMathOperator{\id}{id}

\DeclareMathOperator{\Exp}{Exp}
\DeclareMathOperator{\spann}{span}

\begin{document}
\title[Cohomogeneity one actions on symmetric spaces of higher rank]{Cohomogeneity one actions on  symmetric spaces\\of noncompact type and higher rank}

\author[J.\ C.\ D\'{\i}az-Ramos]{Jos\'{e} Carlos D\'{\i}az-Ramos}
\author[M.\ Dom\'{\i}nguez-V\'{a}zquez]{Miguel Dom\'{\i}nguez-V\'{a}zquez}
\author[T.\ Otero]{Tom\'as Otero}

\address{CITMAga, 15782 Santiago de Compostela, Spain.\newline\indent Department of Mathematics, Universidade de Santiago de Compostela, Spain}
\email{josecarlos.diaz@usc.es}
\email{miguel.dominguez@usc.es}
\email{tomas.otero.casal@usc.es}


\begin{abstract}
We develop a new structural result for cohomogeneity one actions on (not necessarily irreducible) symmetric spaces of noncompact type and arbitrary rank. We apply this result to classify cohomogeneity one actions on $\s{SL}_{n}(\R)/\s{SO}_n$, $n\geq 2$, up to orbit equivalence. We also reduce the classification problem on a reducible space to the classification on each one of its irreducible factors, which in particular allows to classify cohomogeneity one actions on any finite product of hyperbolic spaces.
\end{abstract}

\thanks{The authors have been supported by the projects PID2019-105138GB-C21/AEI/10.13039/501100011033 (Spain) and ED431C 2019/10, ED431F 2020/04 (Xunta de Galicia, Spain). The second and third authors  acknowledge  support  of  the  Ram\'on y Cajal program (AEI, Spain) and the FPU program (Ministry of Universities, Spain), respectively. 
}

\subjclass[2010]{Primary 53C35; Secondary 57S20}

\keywords{Cohomogeneity one action, symmetric space, noncompact type, reducible, $\s{SL}_{n}(\R)/\s{SO}_n$, reductive subalgebra, parabolic subalgebra, canonical extension, nilpotent construction}

\maketitle

\section{Introduction and main results}
When studying proper isometric actions on a given Riemannian manifold, it is natural to investigate those that produce hypersurfaces as their regular orbits: these are the so-called cohomogeneity one actions. This kind of study makes special sense in manifolds with a large isometry group, as is the case of symmetric spaces. 

The classification of cohomogeneity one actions on irreducible symmetric spaces of compact type up to orbit equivalence was completed by Kollross in \cite{Kollross:tams}. However, neither the group theoretical approach used in this classification nor the use of duality of symmetric spaces allow to derive complete classification results in the noncompact case (see~\cite{BT:tohoku}, \cite{K:dual} for partial results). This is ultimately due to the fact that noncompact real semisimple Lie groups enjoy a much richer lattice of subgroups than compact Lie groups. This is why the development of specific techniques, based on the algebraic and geometric structure of symmetric spaces of noncompact type, has been shown to be crucial.

The particular but important case of rank one spaces (that is, the hyperbolic spaces over the normed division algebras) was first addressed in real hyperbolic spaces by Cartan~\cite{Cartan}, following a geometric approach. However, the classification in the other rank one spaces has only been concluded eighty years later, after Berndt and Tamaru's article~\cite{BT:tams} and the very recent classification for quaternionic hyperbolic spaces due to the first two authors and Rodr\'iguez-V\'azquez~\cite{DDR:crelle}, both works based on geometric, and especially, algebraic ideas. 

The higher rank setting seems to be even more complicated. In~\cite{BT:crelle}, Berndt and Tamaru proposed a general procedure for the classification of cohomogeneity one actions on irreducible symmetric spaces up to orbit equivalence. They proved (see~\S\ref{subsec:classes} for details) that any such action either induces a regular foliation, or has a totally geodesic singular orbit, or can be obtained by two new methods, called \emph{canonical extension} and \emph{nilpotent construction}. Cohomogeneity one actions inducing regular foliations on (possibly reducible) symmetric spaces had been previously classified into two families (which we will call of \emph{horospherical} or \emph{solvable type}) in~\cite{BT:jdg} and~\cite{BDT:jdg}, whereas those with a totally geodesic singular orbit (on irreducible spaces) had been determined in~\cite{BT:tohoku}. The canonical extension method allows to extend an action from certain totally geodesic submanifolds, called boundary components, to the whole ambient space. Since totally geodesic submanifolds of symmetric spaces are symmetric spaces of lower rank, this method suggests a rank reduction approach to the classification problem. However, a boundary component of an irreducible symmetric space might be reducible, and no general study of cohomogeneity one actions on reducible symmetric spaces of noncompact type has been developed yet, as far as the authors know. Finally, the actual application of the nilpotent construction in concrete spaces seems to be a very difficult task. Indeed, finding an effective application of this general method to the particular case of quaternionic hyperbolic spaces was the fundamental obstacle that delayed the achievement of the classification finally obtained in~\cite{DDR:crelle}.

Due to the difficulty in applying the nilpotent construction, the lack of a general theory for cohomogeneity one actions on reducible spaces, and the incomplete understanding of the interaction between the nilpotent construction and the canonical extension, only a few explicit classifications are known. Apart from the rank one setting, all known classifications correspond to certain irreducible symmetric spaces of rank two, namely the spaces $\s{SL}_3(\R)/\s{SO}_3$, $\s{SL}_3(\C)/\s{SU}_3$, $\s{SL}_3(\H)/\s{Sp}_3$, $\s{SO}_5(\C)/\s{SO}_5$, $\s{G}_2^2/\s{SO}_4$, $\s{G}_2^\C/\s{G}_2$, and the noncompact real and complex two-plane Grassmannians; see~\cite{BT:crelle}, \cite{BD:tg}, and the recent work~\cite{Solonenko} by Solonenko.

The aim of this article is twofold. On the one hand, we present a new structural result for cohomogeneity one actions on symmetric spaces of noncompact type, which provides an efficient tool to deal with spaces which are reducible or of rank higher than two. Indeed, for reducible spaces we show that the classification problem can be reduced to the corresponding problem on each one of the factors. On the other hand, we show the power of these results by deriving the first explicit classifications of cohomogeneity one actions on symmetric spaces of noncompact type and rank higher than two, namely on the spaces $\s{SL}_{n}(\R)/\s{SO}_n$ and on any finite product of rank~one spaces. 

We now state the structural result that we prove in this article. The subtle but impor\-tant improvement in relation to Berndt and Tamaru's result is that the only actions with a singular orbit that can be obtained by canonical extension, and not by nilpotent construction, are those extending an action of a maximal proper reductive subgroup (or equivalently, with a totally geodesic singular orbit) on a boundary component which is either irreducible or a product of two mutually homothetic symmetric spaces of rank one. See below for the explicit description of the different types of actions mentioned in the theorem.

\begin{theo}\label{th:classification}
	Let $M=G/K$ be a symmetric space of noncompact type, and let $H$ be a connected closed subgroup of $G$. Then $H$ acts on $M$  with cohomogeneity one if and only if the $H$-action is orbit equivalent to one of the following:
	\begin{enumerate}
		\item[\rm (FH)] An action inducing a regular codimension one foliation of horospherical type. \label{item:FH}
		\item[\rm (FS)] An action  inducing a regular codimension one foliation of solvable type. 
		\item[\rm (CEI)] The canonical extension of a cohomogeneity one action with a totally geodesic singular orbit on an irreducible boundary component.
		\item[\rm (CER)] The canonical extension of a cohomogeneity one diagonal action on a reducible boundary component of rank two whose two factors are homothetic.
		\item[\rm (NC)] An action obtained by nilpotent construction.
	\end{enumerate}
\end{theo}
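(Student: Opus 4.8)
The plan is to start from Berndt–Tamaru's structural theorem (\cite{BT:crelle}), which already asserts that a cohomogeneity one $H$-action on an irreducible $M$ is orbit equivalent to one inducing a regular foliation (type (FH) or (FS)), to one obtained by nilpotent construction (type (NC)), or to the canonical extension of a cohomogeneity one action \emph{with a singular orbit} on a proper boundary component $B = B_\Phi$ of $M$. The content of our refinement is therefore to analyze the last case and show that, after passing to orbit equivalence, one may always assume either that the action being extended has a \emph{totally geodesic} singular orbit on $B$ (and then, if $B$ is irreducible, we land in (CEI), while if $B$ decomposes we must still argue), or that the extended action is already accounted for by (NC). The first step is thus to recall the precise setup of the canonical extension: fixing a minimal parabolic subalgebra with Iwasawa decomposition $\g{g} = \g{k} \oplus \g{a} \oplus \g{n}$, a subset $\Phi$ of simple roots determines a boundary component $B_\Phi$, and a cohomogeneity one action on $B_\Phi$ whose orbits are the leaves of a foliation or have a singular orbit canonically extends to $M$ by taking the product with the nilpotent part corresponding to the complement of $\Phi$.

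The heart of the argument is an induction on the rank (or on $\dim M$), together with a careful bookkeeping of which actions on $B_\Phi$ can occur. Concretely, if the $H$-action is the canonical extension of a cohomogeneity one action on $B_\Phi$ that itself induces a \emph{foliation}, then I would show this canonical extension is again a foliation on $M$, hence already of type (FH) or (FS) by the classification of foliations on reducible symmetric spaces in \cite{BDT:jdg}; so we may assume the action on $B_\Phi$ has a singular orbit. Next, applying the induction hypothesis to each irreducible factor of $B_\Phi$ and the reduction-to-factors principle announced in the introduction, a cohomogeneity one action on the (possibly reducible) space $B_\Phi$ with a singular orbit is, up to orbit equivalence, built from a cohomogeneity one action with a singular orbit on exactly one irreducible factor $B'$ of $B_\Phi$ together with the full isometry actions on the remaining factors. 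Then one distinguishes: if that action on $B'$ has a totally geodesic singular orbit and no other factor is present (or the other factors get absorbed), the canonical extension is of type (CEI); the genuinely new phenomenon is when two rank-one factors of $B_\Phi$ are homothetic and the action is the diagonal one — this must be singled out as (CER) because the diagonal does not split as a product action. In all remaining cases — in particular whenever the singular orbit on $B'$ is obtained by nilpotent construction, or when $B_\Phi$ has at least two non-homothetic factors with nontrivial actions — I would verify that the canonical extension is itself realizable by a nilpotent construction on $M$, i.e.\ type (NC); this uses the compatibility between the canonical extension and the nilpotent construction, which should be extracted from the explicit description of both methods in terms of the root space decomposition.

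The step I expect to be the main obstacle is precisely this last verification: showing that a canonical extension which is \emph{not} of type (CEI) or (CER) is orbit equivalent to a nilpotent construction. Berndt and Tamaru's nilpotent construction attaches to a parabolic subalgebra $\g{q}_\Phi = \g{l}_\Phi \oplus \g{n}_\Phi$ a subalgebra of the form (roughly) $\g{h} = \g{h}_{\g{l}} \oplus (\g{n}_\Phi \ominus \g{v})$ where $\g{v}$ is a suitable subspace of a root space and $\g{h}_{\g{l}}$ acts on the Levi part; matching the orbits of a canonical extension with such an $\g{h}$ requires identifying the correct $\g{v}$ and checking the cohomogeneity and orbit-equivalence conditions, which is delicate because the nilpotent construction is only well understood abstractly. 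A secondary difficulty is organizing the induction so that the reducible boundary components are handled uniformly with the irreducible ones; this is where the reduction-to-factors result does the essential work, and I would prove that result first, as a standalone proposition on reducible symmetric spaces, before feeding it into the induction. Finally, the ``only if'' direction is the substantive one; the ``if'' direction — that each of (FH), (FS), (CEI), (CER), (NC) does give a cohomogeneity one action — is immediate from the definitions and from \cite{BT:crelle}, so I would dispatch it in a sentence.
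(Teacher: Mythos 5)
Your overall architecture --- iterate the Berndt--Tamaru alternative down a chain of boundary components, observe that a foliation on the boundary component extends to a foliation, isolate the diagonal rank-one case as (CER), and absorb everything else into (CEI) or (NC) --- matches the paper's strategy. But there are three substantive problems. First, you propose to feed the ``reduction-to-factors principle announced in the introduction'' into your induction; in the paper that principle (Theorem~\ref{th:reducible}) is \emph{deduced from} Theorem~\ref{th:classification}, so as stated your argument risks circularity. The non-circular ingredient is Dynkin's theorem on maximal proper subalgebras of a semisimple Lie algebra: any such subalgebra either respects the splitting into simple ideals or contains a diagonal factor $\g{g}_{j,k,\sigma}$, and in the diagonal case one computes (Theorem~\ref{th:diag_cohom}) that the cohomogeneity of the diagonal action equals the rank of the factor, which forces both factors to have rank one. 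This is what replaces your appeal to the reduction theorem, and it also eliminates your spurious case of ``at least two non-homothetic factors with nontrivial actions'': a cohomogeneity one action sits inside a maximal subalgebra of one of these two shapes, hence is transitive on all irreducible factors but one (or is diagonal on two homothetic rank one factors), so the case you propose to resolve by nilpotent construction simply does not occur --- and could not be so resolved if it did.

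Second, the step you yourself flag as the main obstacle --- that the canonical extension of a nilpotent construction on a boundary component is orbit equivalent to a nilpotent construction on $M$ --- is the technical heart of the proof (Lemma~\ref{lemma:nilpotent}), and your proposal contains no argument for it. One must verify that the subspace $\g{v}\subset\g{n}^1_{\Phi\setminus\{\alpha_j\},\Phi}$ still satisfies (NC1) and (NC2) when regarded as a subspace of $\g{n}^1_{\Lambda\setminus\{\alpha_j\}}$ (transitivity of the relevant normalizer on the larger boundary component $B_{\Lambda\setminus\{\alpha_j\}}$ is not automatic and requires a genuine root-space computation), and then that $\g{h}^\Lambda_\Phi\subset\g{h}_{\Lambda\setminus\{\alpha_j\},\g{v}}$, so that both groups have the same orbits. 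Finally, your induction glosses over the fact that an orbit equivalence on a boundary component only propagates to the canonical extensions when it is realized by an element of $S_\Phi$ (an inner isometry of $B_\Phi$); the paper tracks this carefully via Remarks~\ref{rmk:th:BT} and~\ref{rem:scaling_reducible}, and without it the chain of orbit equivalences between successive canonically extended actions does not close up.
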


We will now introduce the necessary context and notation to explain each one of the types of actions appearing in Theorem~\ref{th:classification}. More information can be found in Section~\ref{sec:parabolic}.

Let $M=G/K$ be a Riemannian symmetric space of noncompact type, where $G$ is (a finite covering of) the connected component of the identity of the isometry group of $M$, and $K$ is the isotropy subgroup at some base point $o\in M$. Recall that $M$ is isometric to an open Euclidean ball endowed with a symmetric metric of nonpositive sectional curvature. The semisimple Lie algebra $\g{g}$ admits a Cartan decomposition $\g{g}=\g{k}\oplus\g{p}$, where $\g{p}$ can be identified with $T_oM$. We can define a positive definite inner product $\langle\cdot,\cdot \rangle$ on $\g{g}$ that agrees with the Killing form $\cal{B}$ of $\g{g}$ on $\g{p}$, with $-\cal{B}$ on $\g{k}$, and that makes $\g{k}$ and $\g{p}$ orthogonal. In this article,  $\ominus$ will denote orthogonal complement with respect to $\langle\cdot,\cdot\rangle$. A choice of a maximal abelian subspace  $\g{a}$ of $\g{p}$ determines a restricted root space decomposition $\g{g}=\g{g}_0\oplus(\bigoplus_{\lambda\in\Sigma}\g{g}_\lambda)$, where $\Sigma\subset\g{a}^*$ is the set of restricted roots. Let $\Sigma^+\subset\Sigma$ be a set of positive roots, and $\Lambda\subset \Sigma^+$ the associated set of simple roots. Recall that $|\Lambda|=\dim\g{a}=\mathrm{rank}\; M$. Then $\g{g}=\g{k}\oplus\g{a}\oplus\g{n}$ is an Iwasawa decomposition of $\g{g}$, where $\g{n}=\bigoplus_{\lambda\in\Sigma^+}\g{g}_\lambda$. 

The connected solvable Lie subgroup $AN$ of $G$ with Lie algebra $\g{a}\oplus\g{n}$ acts simply transitively on $M$. Hence, any connected subgroup of $G$ with Lie algebra of codimension one in $\g{a}\oplus\g{n}$ acts with cohomogeneity one and no singular orbits on $M$. As shown in~\cite{BDT:jdg} (or in~\cite{BT:jdg} for the case that $M$ is irreducible), any cohomogeneity one action on $M$ without singular orbit, or equivalently, inducing a \emph{regular foliation}, arises in this way, up to orbit equivalence. Moreover, these homogeneous regular foliations have been classified into two types, which correspond to items (FH) and (FS) in Theorem~\ref{th:classification}:
\begin{enumerate}
	\item[\rm (FH)] \emph{Horospherical type}, if it is induced by the action of the connected subgroup of $AN$ with Lie algebra $(\g{a}\ominus\ell)\oplus\g{n}$, for some one-dimensional subspace $\ell$ of $\g{a}$, up to orbit equivalence. Such regular foliations are characterized by the property that all their orbits are mutually congruent. For some choices of $\ell$, these orbits are horospheres (see~\cite[Remark~5.4]{DST:jmpa}).
	\item[\rm (FS)] \emph{Solvable type}, if it is induced by the action of the connected subgroup of $AN$ with Lie algebra $\g{a}\oplus(\g{n}\ominus\ell)$, for some one-dimensional subspace $\ell$ of a simple root space $\g{g}_\alpha$, $\alpha\in \Lambda$, up to orbit equivalence. These foliations have exactly one minimal leaf.
\end{enumerate}

We will now describe the remaining actions in Theorem~\ref{th:classification}. 
Let $\Phi\subset \Lambda$. Define $\Sigma_{\Phi}=\Sigma\cap\spann\Phi$, and $\Sigma^+_\Phi=\Sigma^+\cap \spann\Phi$. Consider the abelian Lie subalgebra $
\g{a}_\Phi=\bigcap_{\alpha\in\Phi}\ker\alpha$, and the nilpotent subalgebra $\g{n}_\Phi=\bigoplus_{\lambda\in \Sigma^+\setminus\Sigma^+_\Phi}\g{g}_\lambda\subset\g{n}$. The connected Lie subgroup $A_\Phi N_\Phi$ of $AN$ with Lie algebra $\g{a}_\Phi\oplus\g{n}_\Phi$ is known to act freely and polarly on $M$, as follows from the horospherical decomposition of $M$ associated with the parabolic subgroup of $G$ determined by $\Phi$ (see~\cite{DV:imnr}). This means that there is a totally geodesic submanifold $B_\Phi$ of $M$ passing through $o$ and intersecting each $A_\Phi N_\Phi$-orbit perpendicularly (and exactly once). Such totally geodesic submanifold $B_\Phi$, which is called a \emph{boundary component} in the context of the maximal Satake compactification of $M$, is intrinsically a symmetric space of noncompact type and rank $|\Phi|$. If we denote by $\g{b}_\Phi\cong T_o B_\Phi$ the Lie triple system corresponding to the totally geodesic submanifold $B_\Phi$, then $\g{s}_\Phi=[\g{b}_\Phi,\g{b}_\Phi]\oplus\g{b}_\Phi$ is a real semisimple Lie algebra, and the associated connected subgroup $S_\Phi$ of $G$ is (up to a finite covering) the connected component of the identity of the isometry group of~$B_\Phi$.
Given a Lie subgroup $H_\Phi$ of $S_\Phi$ acting with cohomogeneity one on $B_\Phi$, the action of the Lie group 
\[
H_\Phi^\Lambda=H_\Phi A_\Phi N_\Phi
\]
on $M$ is of cohomogeneity one on $M$, and is called the \emph{canonical extension} of the $H_\Phi$-action on $B_\Phi$ to $M$. Items (CEI) and (CER) of Theorem~\ref{th:classification} correspond to canonical extensions of two different types of cohomogeneity one $H_\Phi$-actions with a totally geodesic singular orbit on a boundary component $B_\Phi$:
\begin{enumerate}\setcounter{enumi}{2}
	\item[\rm (CEI)] $\Phi$ is a connected subset of simple roots in the Dynkin diagram of $\g{g}$, or equivalently, $B_\Phi$ is an irreducible symmetric space of noncompact type. Up to orbit equivalence, the group $H_\Phi$ can be taken as a maximal proper connected reductive subgroup of $S_\Phi$. As already mentioned, cohomogeneity one actions with a totally geodesic singular orbit on irreducible spaces have been completely classified in~\cite{BT:tohoku}.
	\item[\rm (CER)] $\Phi=\{\alpha,\beta\}$, where $\alpha$ and $\beta$ are orthogonal simple roots with $\dim\g{g}_\alpha=\dim\g{g}_\beta$ and $\dim\g{g}_{2\alpha}=\dim\g{g}_{2\beta}$. Equivalently, $B_\Phi=B_{\{\alpha\}}\times B_{\{\beta\}}=\mathbb{F} \s{H}^n\times \mathbb{F} \s{H}^n$ is a reducible boundary component of rank two whose factors are mutually homothetic hyperbolic spaces of the same dimension and over the same normed division algebra $\mathbb{F}$. By cohomogeneity one \emph{diagonal action} on such a $B_\Phi$ we understand the action of the connected subgroup $H_{\Phi}$ of $G$ with Lie algebra $\g{h}_{\Phi}=\{X+\sigma X:X\in \g{s}_{\{\alpha\}}\}\cong\g{s}_{\{\alpha\}}\cong\g{s}_{\{\beta\}}$, for some Lie algebra isomorphism $\sigma\colon \g{s}_{\{\alpha\}}\to\g{s}_{\{\beta\}}$ between the Lie algebras of the isometry groups of $B_{\{\alpha\}}$ and $B_{\{\beta\}}$. Such an $H_\Phi$-action on $B_\Phi$ has a diagonal totally geodesic singular orbit $H_\Phi\cdot o$ homothetic to $B_{\{\alpha\}}\cong B_{\{\beta\}}$.
\end{enumerate}

In order to describe the nilpotent construction we can and will restrict our attention to subsets $\Phi$ of simple roots of the form $\Phi=\Lambda\setminus\{\alpha\}$, for some simple root $\alpha$. Given any $\lambda\in \Sigma^+\setminus\Sigma^+_{\Phi}$, the coefficient of $\alpha$ in the expression of $\lambda$ as a sum of simple roots is a positive integer $k$. This determines a grading $\g{n}_{\Phi}=\bigoplus_{k\in\mathbb{N}}\g{n}_{\Phi}^k$. Define $L_{\Phi}=Z_G(\g{a}_{\Phi})$ as the centralizer of $\g{a}_{\Phi}$ in $G$, whose Lie algebra is $\g{l}_{\Phi}=\g{g}_0\oplus(\bigoplus_{\lambda\in\Sigma_{\Phi}}\g{g}_\lambda)$, and consider the group $K_{\Phi}=L_{\Phi}\cap K$ and the totally geodesic submanifold $F_{\Phi}=L_{\Phi}\cdot o\cong (A_{\Phi}\cdot o)\times B_{\Phi}$. 

\begin{enumerate}\setcounter{enumi}{4}
	\item[\rm (NC)] Let $\g{v}$ be a subspace of $\g{n}_{\Phi}^1$ with $\dim\g{v}\geq 2$. Let $N_\cdot(\cdot)$ denote a normalizer, and assume that the following two conditions are satisfied:
	\begin{enumerate}[\hspace{7ex}]
		\item [\quad\rm (NC1)] $N_{L_{\Phi}}(\g{n}_{\Phi}\ominus\g{v})$ acts transitively on $F_{\Phi}$, and
		\item[\rm (NC2)] $N_{K_{\Phi}}(\g{v})$ acts transitively on the unit sphere of $\g{v}$. 
	\end{enumerate}
	Then the connected subgroup $H_{\Phi,\g{v}}$ of $G$ with Lie algebra
	\[
	\g{h}_{\Phi,\g{v}}=N_{\g{l}_{\Phi}}(\g{n}_{\Phi}\ominus\g{v})\oplus(\g{n}_{\Phi}\ominus\g{v})
	\]
	acts with cohomogeneity one and a minimal singular orbit $H_{\Phi,\g{v}}\cdot o$ on $M$. In this case, we say that such action has been obtained by \emph{nilpotent construction} from the choice of the simple root $\alpha$ and the subspace $\g{v}$.
\end{enumerate}
As already mentioned, the determination of the possible subspaces $\g{v}$ giving rise to cohomogeneity one actions via nilpotent construction may be a complicated task for many spaces~$M$, due to the difficulty of checking conditions (NC1) and (NC2) simultaneously. It is important to remark that many actions obtained by nilpotent construction may be obtained via canonical extension as well. Indeed, so far, examples of nilpotent constructions that cannot be achieved as canonical extensions have only been found in the hyperbolic spaces of nonconstant curvature, and in the two spaces of $(\s{G}_2)$-type (see~\cite{BT:crelle}, \cite{BD:tg}).
\medskip

As a first application of the structural result in Theorem~\ref{th:classification}, we derive the explicit clas\-sification of cohomogeneity one actions on the symmetric space $\s{SL}_{n+1}(\R)/\s{SO}_{n+1}$, $n\geq 1$, whose rank is $n$. We recall that this family of spaces of noncompact type is universal in the sense that any symmetric space of noncompact type (maybe after rescaling the metric on its irreducible factors) can be isometrically embedded in  $\s{SL}_{n+1}(\R)/\s{SO}_{n+1}$ in an equivariant and totally geodesic manner, for some $n\geq 1$ (see~\cite[\S2.6.5]{Eb:book}). 

\begin{theo}\label{th:sl_n}
	Let $M=G/K=\s{SL}_{n+1}(\R)/\s{SO}_{n+1}$, $n\geq 1$, and let $\Lambda=\{\alpha_1,\dots,\alpha_n\}$ be a set of simple roots for $\g{g}=\g{sl}_{n+1}(\R)$ whose Dynkin diagram is
	
	\begin{center}
		\medskip
	\begin{tikzpicture}[node distance=\nodedistance,g/.style={circle,inner sep=\circleinnersep,draw}]
	\node[g] (a1) [label=below:$\alpha_1$] {};
	\node[g] (a2) [right=of a1,label=below:$\alpha_2$] {}
	edge [] (a1);
	\node[] (a3) [right=of a2] {};
	\node[g] (ap) [right=of a3, label=below:$\alpha_{n-1}$] {};
	\node[g] (an) [right=of ap, label=below:$\alpha_n$] {}
	edge [] (ap);
	\draw ($(a3)!.5!(ap)$) -- (ap);
	\draw ($(a3)!.5!(a2)$) -- (a2);
	\draw [dashed] ($(a3)!.5!(a2)$) -- ($(a3)!.5!(ap)$);
	\end{tikzpicture}
	\end{center}
	Any cohomogeneity one action on $M$ is orbit equivalent to one of the following actions:	
	\begin{enumerate}
		\item[\rm (FH)] The action of the connected subgroup of $\s{SL}_{n+1}(\R)$ with Lie algebra 
		$(\g{a}\ominus\ell)\oplus\g{n}$,
		for some one-dimensional linear subspace $\ell$ of $\g{a}$. 
		\item[\rm (FS)] The action of the connected subgroup of $\s{SL}_{n+1}(\R)$ with Lie algebra
		$\g{a}\oplus(\g{n}\ominus\g{g}_{\alpha_j})$,
		for some simple root $\alpha_j\in \Lambda$. 
		\item[\rm (CE)] The canonical extension $H_\Phi^\Lambda$ of the action of the connected subgroup $H_\Phi$ of $G$ on a boundary component $B_\Phi$, for one of the cases listed on the table below.
	\end{enumerate}
			\begin{center}
	\medskip
	\begin{tabular}{lllll}
		\hline
		$\g{h}_\Phi$ & $\Phi$ & $B_\Phi$ & $\codim(H_\Phi^\Lambda\cdot o)$ & \emph{Comments} 
		\\ \hline
		$\g{k}_{\{\alpha_j\}}\cong\g{so}_2$ & $\{\alpha_j\}$ & $\R \s{H}^2$  & $2$ & $1\leq j\leq n$
		\\			
		$\g{sl}_{k-j+1}(\R)\oplus\R$ & $\{\alpha_j,\dots,\alpha_k\}$ & 	$\s{SL}_{k-j+2}(\R)/\s{SO}_{k-j+2}$  & $k-j+1$ & $1\leq j< k \leq n$ 
		\\
		$\g{sp}_{2}(\R)$ & $\{\alpha_j,\alpha_{j+1},\alpha_{j+2}\}$ & 	$\s{SL}_{4}(\R)/\s{SO}_{4}$ & $3$& $1\leq j\leq n-2$ 
		\\
		$\g{s}_{j,k,\sigma}\cong\g{sl}_{2}(\R)$ & $\{\alpha_j,\alpha_k\}$ & 	$\R \s{H}^2\times\R \s{H}^2$ & $2$ & $|k-j|>1$   \\ \hline				
	\end{tabular}	
\end{center}
\end{theo}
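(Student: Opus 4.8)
The plan is to specialize the general structural result of Theorem~\ref{th:classification} to $M=\s{SL}_{n+1}(\R)/\s{SO}_{n+1}$, where $\g{g}=\g{sl}_{n+1}(\R)$ has restricted root system of type $A_n$ (reduced, all root spaces one-dimensional, all simple roots of the same length with Dynkin diagram a chain). The foliation actions (FH) and (FS) transcribe verbatim, since in type $A_n$ every simple root space is already one-dimensional, so the subspace $\ell\subset\g{g}_{\alpha_j}$ in (FS) must be all of $\g{g}_{\alpha_j}$. Thus the entire content of the proof is to determine which actions of types (CEI), (CER) and (NC) actually occur, and to identify them explicitly.

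First I would rule out the nilpotent construction (NC). Writing $\Phi=\Lambda\setminus\{\alpha\}$ for a simple root $\alpha=\alpha_i$, the grading $\g{n}_\Phi=\bigoplus_k\g{n}_\Phi^k$ in type $A_n$ has $\g{n}_\Phi=\g{n}_\Phi^1$ (every positive root not in $\Sigma_\Phi^+$ has $\alpha_i$-coefficient exactly $1$, because in $A_n$ all coefficients of any root in the simple root basis are $0$ or $1$), so $\g{n}_\Phi$ is abelian and $\g{n}_\Phi=\g{n}_\Phi^1$ is the full space from which $\g{v}$ is extracted. One then analyzes condition (NC2): $\g{n}_\Phi^1$ as a module for $K_\Phi=L_\Phi\cap K$ (equivalently, for the reductive part $L_\Phi$) decomposes, in the $A_n$ case, as $\mathbb{R}^i\otimes\mathbb{R}^{n+1-i}$ for the block structure determined by deleting the $i$-th node, and one checks that the isotropy action of $N_{K_\Phi}(\g{v})$ on the unit sphere of any $\g{v}$ with $\dim\g{v}\geq 2$ cannot be transitive — this is essentially the statement that the relevant orthogonal/special-orthogonal groups acting on a tensor product (or on a proper nontrivial invariant subspace) do not act transitively on spheres in dimension $\geq 2$, which can be settled by a short representation-theoretic argument or by citing the analysis in~\cite{BT:crelle} for the $A_n$ case. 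Hence (NC) produces nothing new beyond what is already obtained by canonical extension. \emph{This is the step I expect to be the main obstacle}: one must be careful to treat all the ways $\g{v}$ can sit inside the tensor-product module $\g{n}_\Phi^1$ and to handle the low-rank boundary cases, and to make sure that every (NC) example that does satisfy (NC1)--(NC2) reappears in the (CE) list.

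Finally I would enumerate the canonical extensions. For (CEI) one needs the connected subsets $\Phi$ of the chain $A_n$; such a $\Phi=\{\alpha_j,\dots,\alpha_k\}$ gives $B_\Phi\cong\s{SL}_{k-j+2}(\R)/\s{SO}_{k-j+2}$, and by the cited classification~\cite{BT:tohoku} of cohomogeneity one actions with a totally geodesic singular orbit on this irreducible space one obtains exactly the maximal proper reductive subgroups: for $|\Phi|=1$ (the factor $\R\s{H}^2$) the isotropy-type subgroup with $\g{h}_\Phi\cong\g{so}_2$ giving a point singular orbit; for general $|\Phi|=k-j+1$ the subgroup $\g{sl}_{k-j+1}(\R)\oplus\R$ (coming from the reductive subgroup $\s{S}(\s{GL}_1\times\s{GL}_{k-j+1})$, i.e. a totally geodesic $\R\s{H}^{k-j+1}\hookrightarrow\s{SL}_{k-j+2}(\R)/\s{SO}_{k-j+2}$); and the exceptional embedding $\g{sp}_2(\R)\hookrightarrow\g{sl}_4(\R)$ available only when $B_\Phi=\s{SL}_4(\R)/\s{SO}_4$, i.e. when $\Phi$ is a connected set of three simple roots. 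Computing $\codim(H_\Phi^\Lambda\cdot o)$ for each uses that canonical extension preserves the codimension of the singular orbit, so the codimension equals that of the singular orbit of $H_\Phi$ in $B_\Phi$: $2$, $k-j+1$, and $3$ respectively. For (CER) one needs pairs $\Phi=\{\alpha_j,\alpha_k\}$ of orthogonal simple roots (in the chain $A_n$, orthogonality is exactly $|k-j|>1$) with $\dim\g{g}_{\alpha_j}=\dim\g{g}_{\alpha_k}$ (automatic, both $=1$) and $\dim\g{g}_{2\alpha_j}=\dim\g{g}_{2\alpha_k}$ (automatic, both $=0$); then $B_\Phi=\R\s{H}^2\times\R\s{H}^2$, the diagonal action has a totally geodesic $\R\s{H}^2$ singular orbit of codimension $2$, and its canonical extension has codimension-$2$ singular orbit, with $\g{h}_\Phi=\{X+\sigma X: X\in\g{s}_{\{\alpha_j\}}\}\cong\g{sl}_2(\R)$, which I denote $\g{s}_{j,k,\sigma}$. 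Assembling these four families into the table, together with (FH) and (FS), and invoking Theorem~\ref{th:classification} to guarantee completeness, finishes the proof; a final remark should note that distinct entries of the list are pairwise non-orbit-equivalent (by comparing the dimension/type of the singular orbit and the structure of $\g{h}_\Phi$), so the classification is irredundant.
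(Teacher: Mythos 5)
Your outline of the (FH), (FS), (CEI) and (CER) parts matches the paper's proof, but the way you propose to dispose of the nilpotent construction contains a genuine error. You claim that for $\Phi=\Lambda\setminus\{\alpha_i\}$, with $\g{n}_\Phi^1\cong\R^i\otimes\R^{n+1-i}$ as a $K_\Phi$-module, ``the isotropy action of $N_{K_\Phi}(\g{v})$ on the unit sphere of any $\g{v}$ with $\dim\g{v}\geq 2$ cannot be transitive,'' so that (NC2) always fails. This is false: taking $\g{v}=\R^i\otimes f^1$ (a ``column'' of the tensor product, i.e.\ $\g{v}=\g{g}_{\alpha_{1}+\dots+\alpha_i}\oplus\dots\oplus\g{g}_{\alpha_i}$, or more generally any sub-column as in equation~\eqref{eq:v_sln} of the paper), the subgroup $\s{SO}_i\times\{\Id\}\subset K_\Phi^0\cong\s{SO}_i\times\s{SO}_{n+1-i}$ normalizes $\g{v}$ and acts transitively on its unit sphere, so (NC2) holds; the paper moreover verifies that (NC1) holds as well, since $N_{\g{m}_{\Lambda\setminus\{\alpha_i\}}}(\g{n}_{\Lambda\setminus\{\alpha_i\},\g{v}})$ contains the full solvable part $\g{a}^{\Lambda\setminus\{\alpha_i\}}\oplus\g{n}^{\Lambda\setminus\{\alpha_i\}}$ of $\g{s}_{\Lambda\setminus\{\alpha_i\}}$. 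So the nilpotent construction genuinely produces cohomogeneity one actions on $\s{SL}_{n+1}(\R)/\s{SO}_{n+1}$ for subspaces of dimension $\geq 2$; citing the tensor-product non-transitivity heuristic, or the $\s{SL}_3(\R)$ analysis of~\cite{BT:crelle}, does not rule them out.

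What is actually needed --- and is the bulk of the paper's proof --- is a two-step argument: first, a classification of all $\g{v}\subset\g{n}^1_{\Lambda\setminus\{\alpha_i\}}$ satisfying (NC1)--(NC2), carried out by exploiting the polarity of the $K^0_{\Lambda\setminus\{\alpha_i\}}$-action on $\R^i\otimes(\R^{n+1-i})^*$ with section $\Xi=\spann\{e_l\otimes f^l\}$ to show that, up to conjugacy, $\g{v}$ must lie entirely in one row or one column of the tensor product (any $\g{v}$ meeting $\Xi$ in a vector with two nonzero coordinates violates (NC1), and the decomposition $\g{v}=\R v\oplus[N_{\g{k}_{\Lambda\setminus\{\alpha_i\}}}(\g{v}),v]$ forced by (NC2) then pins down the shape of $\g{v}$); and second, a verification that for each such $\g{v}$ the group $H_{\Lambda\setminus\{\alpha_i\},\g{v}}$ has the same orbits as the canonical extension of the $\s{SL}_k(\R)\times\R$-action from the boundary component $B_{\{\alpha_{i-k+1},\dots,\alpha_i\}}$, by comparing both Lie algebras with $\g{a}\oplus(\g{n}\ominus\g{v})$ and using a dimension count on the singular orbits. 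Your hedge that one must ``make sure that every (NC) example that does satisfy (NC1)--(NC2) reappears in the (CE) list'' points in the right direction, but as written the proposal asserts there are no such examples and therefore omits the entire argument. A minor additional slip: the totally geodesic singular orbit of $\s{SL}_{k-j+1}(\R)\times\R$ in $\s{SL}_{k-j+2}(\R)/\s{SO}_{k-j+2}$ is $(\s{SL}_{k-j+1}(\R)/\s{SO}_{k-j+1})\times\R$, not a real hyperbolic space $\R\s{H}^{k-j+1}$.
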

		
\medskip
In the table, we use the notation  $\g{s}_{j,k,\sigma}=\{X+\sigma X: X\in\g{s}_{\{\alpha_j\}}\}$ for some isomorphism $\sigma\colon\g{s}_{\{\alpha_j\}}\to \g{s}_{\{\alpha_k\}}$ between the Lie algebras of the isometry groups of $B_{\{\alpha_j\}}$ and $B_{\{\alpha_k\}}$. Without loss of generality, the only singular orbit of the $H_\Phi^\Lambda$-action on $M$ is assumed to pass through the base point $o$. It is important to remark that the group $H_\Phi$ does \emph{not} have to be ``canonically embedded" or ``embedded in the standard way" into the isometry group $S_\Phi$ of $B_\Phi$: its Lie algebra is only of the form $\tau(\g{h}_\Phi^{\mathrm{standard}})$, for some automorphism $\tau$ of $\g{s}_\Phi$, and where $\g{h}_\Phi^{\mathrm{standard}}$ denotes a standard matrix group embedding. This is important since we have a priori no guarantee that the canonical extensions of orbit equivalent actions (for instance, those corresponding to $\g{h}_\Phi^{\mathrm{standard}}$ and $\tau(\g{h}_\Phi^{\mathrm{standard}})$) are orbit equivalent.

\begin{remark}\label{rem:orbit_equivalence}(\emph{Orbit equivalence of the examples.})
	Although the classification results in this article are obtained up to orbit equivalence, the explicit determination of the moduli space of cohomogeneity one actions on a given space entails an added difficulty whose solution lies outside the scope of this article. The reason is that two orbit equivalent cohomogeneity one actions with a totally geodesic singular orbit on a boundary component $B_\Phi$ may (in principle) produce non-orbit equivalent canonical extended actions on $M$. This can happen if the orbit equivalence in $B_\Phi$ is only obtained via an outer isometry of $B_\Phi$ (that is, an isometry not lying in the connected component of the identity of the isometry group of $B_\Phi$), since such outer isometry might not be the restriction of an isometry of $M$. Berndt and Tamaru's classification of cohomogeneity one actions with a totally geodesic singular orbit on irreducible symmetric spaces~\cite{BT:tohoku} is given up to orbit equivalence by a possibly outer isometry. But we do not know if considering the relation of orbit equivalence by an inner isometry (which Solonenko called ``strong orbit equivalence" in~\cite{Solonenko}) would yield more classes. Addressing this problem would require, in particular, understanding the analogous congruence problem for reflective totally geodesic submanifolds, as \cite{BT:tohoku} rests in part on Leung's classification of such submanifolds~\cite{Leung}, where again only congruence by the full group of isometries is considered. In short, this difficulty concerns the actions of type (CEI), as well as of type (CER), where there is a similar problem. The moduli space of actions of foliation types (FH) and (FS) has been determined in~\cite{BT:jdg} in the irreducible case, and in~\cite{DS:isoparametric} and~\cite{Solonenko:reducible} in the reducible setting, whereas the orbit equivalence involving actions obtained by nilpotent construction may in principle require a case-by-case study.
\end{remark}

As a second application of our structural result we reduce the classification problem of cohomogeneity one actions (up to orbit equivalence) on a reducible symmetric space of noncompact type to the classification problem on each one of its irreducible factors. The result basically says that if the action is not of (FH) or (CER) types, then it is a product action. It is interesting to point out that there is no known analog of Theorem~\ref{th:reducible} below in the compact setting, cf.~\cite{K:reducible}.

\begin{theo}\label{th:reducible}
	Let $M$ be a symmetric space of noncompact type with De Rham decomposition $M=M_1\times\dots\times M_s$, where $M_i=G_i/K_i$, $i=1,\dots,s$, and let $G=\prod_{i=1}^s G_i$. 
	Then, a cohomogeneity one action on $M$ is orbit equivalent to one of the following actions:
	\begin{enumerate}[\hspace{8ex}]
		\item [\rm (Prod)] The product action of a subgroup $H_j\times \prod_{\begin{subarray}{c}i=1\\i\neq j\end{subarray}}^s G_i$ of $G$, where $H_j$ is a connected Lie subgroup of $G_j$ that acts with cohomogeneity one on the irreducible factor $M_j$.
		\item [\rm (FH)] The action of the connected subgroup of $G$ with Lie algebra $\g{h}=(\g{a}\ominus\ell)\oplus\g{n}$, for some one-dimensional linear subspace $\ell$ of $\g{a}$. 
		\item [\rm (CER)] The canonical extension of a cohomogeneity one diagonal action on a reducible rank two boundary component of $M$ whose two factors are homothetic.
	\end{enumerate}
\end{theo}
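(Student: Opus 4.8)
The plan is to invoke Theorem~\ref{th:classification} and to show that, when $M$ is reducible, each of the five families (FH), (FS), (CEI), (CER), (NC) appearing there is orbit equivalent to one of (Prod), (FH), (CER). We first recall the relevant structure theory: writing $M=M_1\times\dots\times M_s$ with $M_i=G_i/K_i$ irreducible and $\g{g}_i$ the Lie algebra of $G_i$, one may choose all the Iwasawa data compatibly so that $\g{a}=\bigoplus_i\g{a}_i$, $\Sigma=\bigsqcup_i\Sigma_i$, $\Sigma^+=\bigsqcup_i\Sigma^+_i$, $\Lambda=\bigsqcup_i\Lambda_i$ and $\g{n}=\bigoplus_i\g{n}_i$, the Dynkin diagram of $\g{g}$ is the disjoint union of those of the $\g{g}_i$, and $[\g{g}_i,\g{g}_j]=0$ for $i\neq j$. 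The elementary observation used repeatedly is: if a connected subgroup $H'$ of $G=\prod_iG_i$ has Lie algebra $\g{h}'=\g{h}_j\oplus\bigoplus_{i\neq j}(\g{a}_i\oplus\g{n}_i)$ for some index $j$ and some subalgebra $\g{h}_j$ of $\g{g}_j$, then, since $A_iN_i$ acts simply transitively on $M_i$, the orbits of $H'$ in $M$ coincide with the orbits of the product action of $H_j\times\prod_{i\neq j}G_i$, where $H_j$ is the connected subgroup of $G_j$ with Lie algebra $\g{h}_j$; hence the $H'$-action is orbit equivalent to that product action, and $H'$ acts with cohomogeneity one on $M$ if and only if $H_j$ acts with cohomogeneity one on $M_j$. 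So every time the representative produced by Theorem~\ref{th:classification} has a Lie algebra of this shape we land in case (Prod).

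Type (FH) is already among the allowed conclusions. For type (FS) the representative has Lie algebra $\g{a}\oplus(\g{n}\ominus\ell)$ with $\ell$ a line inside a simple root space $\g{g}_\alpha$; since $\Lambda=\bigsqcup_i\Lambda_i$, the root $\alpha$ lies in a unique $\Lambda_j$, so $\ell\subset\g{g}_\alpha\subset\g{g}_j$ and $\g{a}\oplus(\g{n}\ominus\ell)=\bigl(\g{a}_j\oplus(\g{n}_j\ominus\ell)\bigr)\oplus\bigoplus_{i\neq j}(\g{a}_i\oplus\g{n}_i)$ has the shape above, with $\g{h}_j$ inducing the (FS)-foliation on $M_j$; hence (Prod). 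For type (CEI) the boundary component $B_\Phi$ is irreducible, i.e.\ $\Phi$ is connected in the Dynkin diagram of $\g{g}$, which forces $\Phi\subset\Lambda_j$ for some $j$; a direct inspection then gives $\g{a}_\Phi=\g{a}_\Phi^{(j)}\oplus\bigoplus_{i\neq j}\g{a}_i$ and $\g{n}_\Phi=\g{n}_\Phi^{(j)}\oplus\bigoplus_{i\neq j}\g{n}_i$, where $\g{a}_\Phi^{(j)}$, $\g{n}_\Phi^{(j)}$ are the objects built from $\Phi$ inside $\g{g}_j$, so $\g{h}_\Phi^\Lambda=\g{h}_\Phi\oplus\g{a}_\Phi\oplus\g{n}_\Phi=\bigl(\g{h}_\Phi\oplus\g{a}_\Phi^{(j)}\oplus\g{n}_\Phi^{(j)}\bigr)\oplus\bigoplus_{i\neq j}(\g{a}_i\oplus\g{n}_i)$ has the required shape, with $\g{h}_j$ the canonical extension of the $H_\Phi$-action on $B_\Phi$ to $M_j$ (again cohomogeneity one on $M_j$); hence (Prod). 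Type (CER), with $\Phi=\{\alpha,\beta\}$, splits into two subcases: if $\alpha$ and $\beta$ lie in the same $\Lambda_j$ the same computation shows $\g{h}_\Phi^\Lambda$ has the product shape with $\g{h}_j$ the canonical extension of the diagonal action inside $M_j$, so the action is of type (Prod); if $\alpha\in\Lambda_i$ and $\beta\in\Lambda_j$ with $i\neq j$, then $B_\Phi=B_{\{\alpha\}}\times B_{\{\beta\}}$ genuinely straddles the two distinct De Rham factors $M_i$ and $M_j$, and the action remains of type (CER).

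The remaining case, the nilpotent construction (NC), is the one requiring real work. Here $\Phi=\Lambda\setminus\{\alpha\}$ for a simple root $\alpha$; let $j$ be the index with $\alpha\in\Lambda_j$ and put $\Phi'=\Lambda_j\setminus\{\alpha\}$. Every $\lambda\in\Sigma^+\setminus\Sigma^+_\Phi$ has positive $\alpha$-coefficient, hence lies in $\Sigma^+_j$; therefore $\g{n}_\Phi\subset\g{g}_j$, the grading $\g{n}_\Phi=\bigoplus_k\g{n}_\Phi^k$ agrees with the one computed inside $\g{g}_j$, and in particular $\g{v}\subset\g{n}_\Phi^1\subset\g{g}_j$. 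Using $[\g{g}_i,\g{g}_j]=0$ for $i\neq j$ one checks that $\g{a}_\Phi\subset\g{a}_j$, that $\g{l}_\Phi=\g{l}_{\Phi'}^{(j)}\oplus\bigoplus_{i\neq j}\g{g}_i$, and that $N_{\g{l}_\Phi}(\g{n}_\Phi\ominus\g{v})=N_{\g{l}_{\Phi'}^{(j)}}(\g{n}_\Phi\ominus\g{v})\oplus\bigoplus_{i\neq j}\g{g}_i$, where the superscript $(j)$ denotes the analogous object built from $\Phi'$ inside $\g{g}_j$. Consequently $\g{h}_{\Phi,\g{v}}=N_{\g{l}_\Phi}(\g{n}_\Phi\ominus\g{v})\oplus(\g{n}_\Phi\ominus\g{v})=\bigl(N_{\g{l}_{\Phi'}^{(j)}}(\g{n}_\Phi\ominus\g{v})\oplus(\g{n}_\Phi\ominus\g{v})\bigr)\oplus\bigoplus_{i\neq j}(\g{a}_i\oplus\g{n}_i)$ has the product shape, the inner summand being exactly the nilpotent-construction Lie algebra attached to $\alpha$ and $\g{v}$ inside the irreducible factor $M_j$. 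It remains to see that conditions (NC1), (NC2) for $M$ are equivalent to the corresponding conditions inside $M_j$: since $F_\Phi\cong(A_\Phi\cdot o)\times B_\Phi$ splits as $F_{\Phi'}^{(j)}\times\prod_{i\neq j}M_i$ and the extra summands $\g{g}_i$ act transitively on $M_i$, (NC1) on $F_\Phi$ is equivalent to the transitivity of $N_{L_{\Phi'}^{(j)}}(\g{n}_\Phi\ominus\g{v})$ on $F_{\Phi'}^{(j)}$; and since $\g{k}_i$ ($i\neq j$) commutes with $\g{v}\subset\g{g}_j$, the group $N_{K_\Phi}(\g{v})$ acts on $\g{v}$ through $N_{K_{\Phi'}^{(j)}}(\g{v})$, so (NC2) on $M$ is equivalent to the transitivity of $N_{K_{\Phi'}^{(j)}}(\g{v})$ on the unit sphere of $\g{v}$. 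Thus $\g{h}_j$ arises from a valid nilpotent construction on $M_j$, in particular it induces a cohomogeneity one action there, and the (NC) action on $M$ is orbit equivalent to the product action $H_j\times\prod_{i\neq j}G_i$, i.e.\ of type (Prod).

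Putting the five cases together with the reduction of the first paragraph proves the theorem. The only genuinely non-formal step is (NC), and this is why it is treated last: the difficulty is to verify that \emph{all} the ingredients of the construction --- the subspace $\g{n}_\Phi$ and its grading, the reductive part $\g{l}_\Phi$, the subgroups $L_\Phi$, $K_\Phi$ and the submanifold $F_\Phi$, and the two normalizers entering (NC1) and (NC2) --- concentrate in the single De Rham factor $M_j$ containing the simple root $\alpha$, and that the complementary factors, being acted upon transitively, neither destroy nor create the transitivity required in (NC1) and (NC2). Once this bookkeeping is carried out, the (NC) case collapses to a product action, exactly as (FS), (CEI) and the same-factor subcase of (CER) do.
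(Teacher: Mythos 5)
Your proof is correct and follows essentially the same route as the paper: run through the five cases of Theorem~\ref{th:classification}, observe that in the (FS), (CEI) and (NC) cases all the relevant data concentrate in the single De Rham factor containing the distinguished simple root, so that the Lie algebra splits off a summand acting transitively on the complementary factors and the action is of type (Prod) (the paper packages this splitting argument as Lemma~\ref{lemma:product}). The only blemish is a harmless slip in the (NC) case, where by your own preceding computation the complementary summand of $\g{h}_{\Phi,\g{v}}$ is $\bigoplus_{i\neq j}\g{g}_i$ rather than $\bigoplus_{i\neq j}(\g{a}_i\oplus\g{n}_i)$; this does not affect the conclusion, since $G_i$ also acts transitively on $M_i$.
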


Since actions of types (FH) and (CER) are well understood, Theorem~\ref{th:reducible} easily allows to derive explicit classifications on any product of irreducible spaces $M=M_1\times \dots\times M_s$, whenever we know the classification of cohomogeneity one actions up to orbit equivalence on each irreducible factor $M_i$, $i=1,\dots, s$. This is the case, in particular, of the rank one symmetric spaces of noncompact type. These are precisely the hyperbolic spaces $\mathbb{F} \s{H}^n$, $\mathbb{F}\in\{\R,\C,\H,\mathbb{O}\}$, $n\geq 2$, over the normed division algebras of the reals $\R$, the complex numbers $\C$, the quaternions $\H$, and the octonions $\mathbb{O}$ (in this case, $n=2$). We recall that the real hyperbolic spaces $\R\s{H}^n$ have a root system of type $(\s{A}_1)$, whereas the other rank one symmetric spaces $\mathbb{F} \s{H}^n$, $\mathbb{F}\neq\R$, $n\geq 2$, have a root system of type $(\s{BC}_1)$. Thus, the set of simple roots associated with a product $M=M_1\times\dots\times M_r$, where $M_i=G_i/K_i=\mathbb{F}_i \s{H}^{n_i}$, consists of $r$ mutually orthogonal roots, $\Lambda=\{\alpha_1,\dots,\alpha_r\}$. We will also denote by $\g{k}_i\oplus\g{a}_i\oplus\g{n}_i$ the Iwasawa decomposition of the Lie algebra $\g{g}_i$ of $G_i$ (in particular, $\g{n}_i=\g{g}_{\alpha_i}\oplus\g{g}_{2\alpha_i}$), and put $(\g{k}_i)_0=N_{\g{k}_i}(\g{a}_i)$. In this context, the application of Theorem~\ref{th:reducible} leads to the following classification result.

\begin{theo}\label{th:rank1}
	Let $M=M_1\times\dots\times M_r$ be a Riemannian product of rank one symmetric spaces of noncompact type $M_i=G_i/K_i=\mathbb{F}_{i} \s{H}^{n_i}$, where $\mathbb{F}_{i}\in\{\R,\C,\H,\mathbb{O}\}$, $i=1,\dots,r$, and let $G=\prod_{i=1}^r G_i$. 
	Then, a proper isometric action on $M$ is of cohomogeneity one if and only if it is orbit equivalent to the action of the connected subgroup $H$ of $G$ with one of the following Lie algebras:
		\begin{center}
		\begin{tabular}{lll}
		\hline
			\emph{Type} & $\g{h}$ & \emph{Comments} 
			\\ \hline
			{\rm (FH)} &  $(\g{a}\ominus\ell)\oplus\g{n}$ & $\ell\subset \g{a}$, $\dim\ell=1$. 
			\\[0.5ex] 
			{\rm (FS)} & $\g{a}\oplus(\g{n}\ominus\ell)$ & $\ell\subset\g{g}_{\alpha_j}$, $\dim\ell=1$, $\alpha_j\in \Lambda$. 
			\\[0.5ex] 
			\multirow{2}{*}{\rm (CEI)} &  
			\multirow{2}{*}{$\bigoplus_{\begin{subarray}{c}i=1\\i\neq j\end{subarray}}^r\g{g}_i\oplus\g{h}_j$} & $H_j\subset G_j$ acts on $M_j$ with cohom.~1\\[-0.35ex]&&and a totally geodesic singular orbit.
			\\[0.5ex] 
			\multirow{2}{*}{{\rm (CER)}} & \multirow{2}{*}{$\bigoplus_{\begin{subarray}{c}i=1\\i\neq j,k\end{subarray}}^r\g{g}_i\oplus\g{g}_{j,k,\sigma}$} & $\g{g}_{j,k,\sigma}=\{X+\sigma X:X\in\g{g}_j\}$, $j\neq k$,\\[-0.35ex] && $\sigma\colon\g{g}_j\to\g{g}_k$ Lie algebra isomorphism.
			\\[0.5ex] 
			\multirow{2}{*}{{\rm (NC)}} &
			\multirow{2}{*}{$\bigoplus_{\begin{subarray}{c}i=1\\i\neq j\end{subarray}}^r\g{g}_i\oplus N_{(\g{k}_j)_0}(\g{v})\oplus\g{a}_j\oplus(\g{n}_j\ominus\g{v})$}
			&  $\g{v}\subset \g{g}_{\alpha_j}$ protohomogeneous subspace,\\[-0.35ex]
			&& $\alpha_j\in \Lambda$, $\dim \g{v}\geq 2$.
		   \\ \hline				
		\end{tabular}	
		\smallskip
	\end{center}
\end{theo}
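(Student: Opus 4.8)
The plan is to deduce Theorem~\ref{th:rank1} from Theorem~\ref{th:reducible} by analyzing each of the three types of actions (Prod), (FH), (CER) in the special case where all irreducible factors $M_i = \mathbb{F}_i\s{H}^{n_i}$ have rank one. The type (FH) carries over verbatim and becomes the first row of the table. The type (CER) also carries over essentially verbatim: a reducible rank two boundary component of $M$ whose two factors are homothetic must (since each $M_i$ has rank one, so $\g{a}_\Phi$ must be all of $\g{a}$ minus nothing, i.e.\ $\Phi = \{\alpha_j,\alpha_k\}$ with $\g{a}_\Phi = 0$ forces $B_\Phi = M_j\times M_k$) be of the form $M_j\times M_k$ with $M_j$ homothetic to $M_k$; the diagonal action then has Lie algebra $\g{g}_{j,k,\sigma}=\{X+\sigma X: X\in\g{g}_j\}$ and, tensoring with $\bigoplus_{i\neq j,k}\g{g}_i$ acting on the remaining factors, we obtain the (CER) row. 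The substantive work is unpacking type (Prod).

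For the (Prod) case, we have a subgroup $H_j\times\prod_{i\neq j}G_i$ where $H_j\subset G_j$ acts with cohomogeneity one on $M_j = \mathbb{F}_j\s{H}^{n_j}$. So I would invoke the known classification of cohomogeneity one actions on rank one symmetric spaces of noncompact type, due to Cartan~\cite{Cartan} for $\R\s{H}^n$, Berndt--Tamaru~\cite{BT:tams} for $\C\s{H}^n$ and $\mathbb{O}\s{H}^2$, and D\'iaz-Ramos--Dom\'inguez-V\'azquez--Rodr\'iguez-V\'azquez~\cite{DDR:crelle} for $\H\s{H}^n$. That classification says every cohomogeneity one action on $\mathbb{F}_j\s{H}^{n_j}$ is orbit equivalent to exactly one of: the horospherical foliation (Lie algebra $(\g{a}_j\ominus\ell_j)\oplus\g{n}_j = \g{n}_j$ since $\dim\g{a}_j=1$, i.e.\ $\ell_j=\g{a}_j$); the solvable foliation (Lie algebra $\g{a}_j\oplus(\g{n}_j\ominus\ell_j)$ with $\ell_j\subset\g{g}_{\alpha_j}$ one-dimensional); the action with a totally geodesic singular orbit; or the nilpotent construction action with Lie algebra $N_{(\g{k}_j)_0}(\g{v})\oplus\g{a}_j\oplus(\g{n}_j\ominus\g{v})$ for a protohomogeneous subspace $\g{v}\subset\g{g}_{\alpha_j}$ with $\dim\g{v}\geq 2$. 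Substituting each of these into the $j$-th slot of the product $\bigoplus_{i\neq j}\g{g}_i\oplus\g{h}_j$ produces, respectively: a type (FH) action on $M$ (namely $\ell=\g{a}_j\subset\g{a}$), a type (FS) action (namely $\ell\subset\g{g}_{\alpha_j}$), the (CEI) row, and the (NC) row of the target table. One should check that the (FH) action so produced indeed coincides with the general (FH) family in the first row, and that a product of a solvable foliation on $M_j$ with the full $G_i$ on the other factors is genuinely of solvable type in the sense of (FS) for the ambient $M$ --- this is immediate from the definitions since $\g{a}\oplus(\g{n}\ominus\ell) = \bigoplus_{i\neq j}(\g{a}_i\oplus\g{n}_i)\oplus\g{a}_j\oplus(\g{n}_j\ominus\ell)$.

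Conversely, for the ``if'' direction one checks that each listed Lie algebra $\g{h}$ does define a cohomogeneity one action: (FH) and (FS) by the general theory of regular foliations recalled in the introduction; (CEI) and (CER) as canonical extensions (note for (CEI) that $\Phi = \{\alpha_j\}$ is connected, hence $B_\Phi = M_j$ is irreducible, so it is a genuine instance of the (CEI) mechanism with $H_\Phi = H_j$ a maximal proper reductive subgroup, and for (CER) the hypothesis $\dim\g{g}_{\alpha_j}=\dim\g{g}_{\alpha_k}$, $\dim\g{g}_{2\alpha_j}=\dim\g{g}_{2\alpha_k}$ is exactly $M_j$ homothetic to $M_k$); and (NC) as a nilpotent construction, checking (NC1) and (NC2) reduce to the corresponding rank one statement (here $\Phi = \Lambda\setminus\{\alpha_j\}$, so $\g{n}_\Phi = \g{n}_j$, $L_\Phi = Z_G(\g{a}_\Phi)$ whose action on $F_\Phi$ factors appropriately, and $K_\Phi\cap(\cdot)$ reduces to $(\g{k}_j)_0$). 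The main obstacle, such as it is, will be the careful bookkeeping to confirm that the rank one trichotomy/quadrichotomy of known classifications maps \emph{exactly} onto the five rows of the table with no overlaps or omissions --- in particular verifying that ``$\g{v}$ protohomogeneous'' is precisely the rank one specialization of conditions (NC1)--(NC2), and that no type (CER) action arises from the (Prod) branch (it cannot, since a product action never mixes two factors diagonally), so that (CER) appears only via the (CER) branch of Theorem~\ref{th:reducible}; these are routine but must be stated. No genuinely hard step is expected beyond assembling these pieces.
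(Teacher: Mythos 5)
Your proposal follows essentially the same route as the paper's proof: it runs through the three cases (Prod), (FH), (CER) of Theorem~\ref{th:reducible}, splits (Prod) according to the known classification of cohomogeneity one actions on rank one spaces (\cite{BT:tams}, \cite{DDR:crelle}) into the four subcases yielding rows (FH), (FS), (CEI), (NC), and identifies the canonical extension of the diagonal action with the product Lie algebra $\bigoplus_{i\neq j,k}\g{g}_i\oplus\g{g}_{j,k,\sigma}$ via the product lemma. This is correct and matches the paper's argument.
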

As already mentioned, the cohomogeneity one $H_j$-actions with a totally geodesic singular orbit on a rank one space $M_j$ mentioned in item (CEI) above are well known (up to orbit equivalence), see~\cite[\S6]{BB:crelle}. The protohomogeneous subspaces $\g{v}$ of $\g{g}_{\alpha_j}\cong\mathbb{F}_j^{n_j-1}$ are, by definition, those subspaces such that $N_{(K_j)_0}(\g{v})$ acts transitively on the unit sphere of $\g{v}$, where $(K_j)_0=N_{K_j}(\g{a}_j)$ (equivalently, $\g{v}$ is protohomogeneous if it satisfies condition (NC2) of the nilpotent construction). Protohomogeneous subspaces in the rank one setting have been classified in~\cite[\S7]{BB:crelle} and~\cite[\S4]{BT:tams} for $\mathbb{F}_j\in\{\R,\C,\mathbb{O}\}$, and in~\cite{DDR:crelle} for $\mathbb{F}_j=\H$.

It is important to remark that, unlike the result for $\s{SL}_n(\R)/\s{SO}_n$, in the case considered in Theorem~\ref{th:rank1} one can easily determine when two given actions are orbit equivalent. Indeed, on the one hand, two orbit equivalent actions must be of the same type in Theorem~\ref{th:rank1}, except when $\g{v}\cong\mathbb{F}_j^{l}\subset\mathbb{F}_j^{n_j-1}$, $l\in\{0,\dots,n_j-2\}$ in type (NC) (which yields also an action of type (CEI)).
On the other hand, the moduli space of cohomogeneity one actions on rank one spaces up to orbit equivalence has been completely determined~\cite{BT:tams},~\cite{DDR:crelle} (which immediately gives the moduli space of actions of types (FS), (CEI) and (NC) in Theorem~\ref{th:rank1}, since all these fit into type (Prod) of Theorem~\ref{th:reducible}), as well as for actions of (FH) type~\cite{BT:jdg}, \cite{DS:isoparametric}, \cite{Solonenko:reducible}. Finally, two actions of type (CER) with the same pair $(j,k)$ are orbit equivalent (with independence of the isomorphism $\sigma$), see~Proposition~\ref{prop:two_isomorphisms}. We illustrate how this determination of the moduli space can be carried out by considering the case of the product of two real hyperbolic spaces.

\begin{example} (\emph{Cohomogeneity one actions on $M=\R\s{H}^n\times \R\s{H}^m$.})
	Assume first that $m=n$. Then, the moduli space of cohomogeneity one actions up to orbit equivalence is $(I_n\times\Gamma_1)\sqcup \mathbb{R}\s{P}^1/\Gamma_2\sqcup \{\g{g}_{1,2,\sigma}\}$, where $I_k=\{0,\dots, k-1\}$, and $(\Gamma_1,\Gamma_2)=(\{0\},\mathbb{Z}_2)$ if both $\R\s{H}^n$ factors are isometric, or $(\Gamma_1,\Gamma_2)=(\mathbb{Z}_2,\{\id\})$ otherwise. 
	Given $H_1=\s{SO}_{1,k}^0\times \s{SO}_{n-k}$ (whose action on $\R \s{H}^n$ has a totally geodesic orbit homothetic to $\R \s{H}^k$) and $H_2=\s{SO}_{1,n}^0$, $(k,0)\in I_n\times \Gamma_1$ represents the $H_1\times H_2$-action, and $(k,1)\in I_n\times \Gamma_1$ the $H_2\times H_1$-action. Both actions are orbit equivalent if and only if both $\R\s{H}^n$ factors of $M$ are isometric, which motivates the definition of $\Gamma_1$.
	The quotient $\mathbb{R}\s{P}^1/\Gamma_2$ represents the actions of type (FH), where the space of lines $\ell$ in $\g{a}$ is represented by $\mathbb{R}\s{P}^1$, and $\Gamma_2$ is the group of automorphisms of $\g{a}$ of the form  $\Ad(k)\vert_{\g{a}}$ and inducing a symmetry of the Dynkin diagram of $M$, where $k$ is an isometry of $M$ fixing $o$ and such that $\Ad(k)\g{a}\subset\g{a}$ (see~\cite{DS:isoparametric}). Finally, $\{\g{g}_{1,2,\sigma}\}$ represents the unique diagonal action of type (CER). If both factors have different dimensions $n$ and $m$, the moduli space is $I_n\sqcup I_m\sqcup \mathbb{R}\s{P}^1$.
\end{example}

The tools developed in this paper can be applied to derive explicit classifications on other symmetric spaces of noncompact type. Basically, the only difficulty to do this stems from determining the actions that arise via nilpotent construction. Even in the seemingly simpler case of spaces whose isometry Lie algebra is split real semisimple, this study would entail a long, case-by-case analysis involving various representations of real semisimple Lie algebras. In other cases, the problem seems to get even harder, as illustrated by the solution to the problem for quaternionic hyperbolic spaces~\cite{DDR:crelle}. However, we expect that combining the structural result in Theorem~\ref{th:classification} with an appropriate generalization of the ideas developed by Solonenko in~\cite{Solonenko} (which ultimately rely on~\cite{BD:tg} and~\cite{DDR:crelle}) may lead in the future to the complete solution of the classification problem.

\medskip
This paper is organized as follows. In Section~\ref{sec:parabolic} we introduce the concepts and facts needed to state Berndt and Tamaru's structural result for cohomogeneity one actions. In Section~\ref{sec:diagonal} we discuss diagonal cohomogeneity one actions on reducible symmetric spaces. Section~\ref{sec:structural} is devoted to the proof of our structural result stated in Theorem~\ref{th:classification}. Finally, in Section~\ref{sec:applications} we will prove Theorems~\ref{th:sl_n},~\ref{th:reducible} and~\ref{th:rank1} as an application of Theorem~\ref{th:classification}.

\medskip
The authors would like to thank Alberto Rodr\'iguez-V\'azquez for helpful comments.

\section{Parabolic subgroups and Berndt-Tamaru's result}\label{sec:parabolic}
The aim of this section is to explain the different types of actions considered by Berndt and Tamaru in~\cite{BT:crelle}, as well as their structural result for cohomogeneity one actions (\S\ref{subsec:classes}). For that, we will first introduce in~\S\ref{subsec:parabolic} the basic facts, terminology and notation in relation to the algebraic structure of symmetric spaces of noncompact type, and particularly, the description of parabolic subgroups of real semisimple Lie groups. We will essentially follow the notation in \cite{BT:crelle}; see also \cite[Chapter~13]{BCO:book}, \cite{BDT:jdg}, \cite{BD:tg}, \cite[\S{}I.1]{BJ:book}, \cite[Chapter~2]{Eb:book}, \cite[Chapter~7]{Knapp} and~\cite{Solonenko} for more information.

\subsection{Parabolic subgroups}\label{subsec:parabolic}
Let $M=G/K$ be a connected Riemannian symmetric space of noncompact type. We can assume that $(G,K)$ is a symmetric pair, which in particular implies that $G$ is a Lie group that acts almost effectively on $M$, and $K$ is the isotropy subgroup of $G$ at some point $o\in M$ that we fix from now on. Since $M$ is of noncompact type, the Lie group $G$ is real semisimple, and $K$ is a maximal compact subgroup of $G$. As usual, we will use gothic letters for the Lie algebras. Thus, let $\g{g}=\g{k}\oplus\g{p}$ be a Cartan decomposition of the real semisimple Lie algebra $\g{g}$ of $G$, where the subspace $\g{p}$ is naturally identified with the tangent space $T_o M$. Let $\theta$ be the associated Cartan involution, given by $\theta (X+Y)=X-Y$ for $X\in\g{k}$ and $Y\in\g{p}$, and $\cal{B}$ the Killing form of $\g{g}$. Then $\langle X,Y\rangle=-\cal{B}(X, \theta Y)$ is a positive definite inner product on $\g{g}$ such that $\langle \ad(X) Y, Z\rangle=-\langle Y,\ad(\theta X)Y\rangle$ for every $X$, $Y$, $Z\in\g{g}$. 
From now on, we will consider $\g{g}$ endowed with this inner product. Also, given two subspaces $V\subset W\subset \g{g}$, we will denote by $W\ominus V$ the orthogonal complement of $V$ in $W$ with respect to $\langle \cdot,\cdot\rangle$.

Let $\g{a}$ be a maximal abelian subspace of $\g{p}$, and consider the corresponding restricted root space decomposition
$\g{g}=\g{g}_0\oplus \left( \bigoplus_{\lambda\in\Sigma}\g{g}_\lambda\right)$,
where $\Sigma$ is the set of restricted roots, i.e.\ those nonzero covectors  $\lambda\in \g{a}^*$ such that the subspace 
\[
\g{g}_\lambda=\{X\in\g{g}:[H,X]=\lambda(H)X\text{ for all }H\in\g{a}\}
\]
is nonzero. It turns out that $\g{g}_0=\g{k}_0\oplus\g{a}$, where $\g{k}_0=Z_{\g{k}}(\g{a})$ is the centralizer (and the normalizer) of $\g{a}$ in $\g{k}$. For each root $\lambda\in \Sigma$ we define the root vector $H_\lambda\in\g{a}$ by the relation $\lambda(H)=\langle H_\lambda, H\rangle$ for all $H\in\g{a}$. Moreover, we have  $\theta\g{g}_\lambda=\g{g}_{-\lambda}$ and $[\g{g}_\lambda,\g{g}_\mu]\subset\g{g}_{\lambda+\mu}$, for any  $\lambda,\mu\in\Sigma\cup\{0\}$.

Let $r=\dim\g{a}$ be the rank of $M$. The set $\Sigma$ constitutes a (possibly nonreduced) root system on $\g{a}^*$. Choose a subset $\Sigma^+$ of $\Sigma$ of positive roots, and let $\Lambda=\{\alpha_1,\dots,\alpha_r\}\subset\Sigma^+$ be the corresponding set of simple roots. We define the nilpotent subalgebra $\g{n}=\bigoplus_{\lambda\in\Sigma^+}\g{g}_\lambda$. Then $\g{g}=\g{k}\oplus\g{a}\oplus\g{n}$ is an Iwasawa decomposition of $\g{g}$, and the corresponding decomposition at the Lie group level states that $G$ is diffeomorphic to the Cartesian product $K\times A\times N$, where $A$ and $N$ are the connected subgroups of $G$ with Lie algebras $\g{a}$ and $\g{n}$, respectively. It follows that the solvable part of the Iwasawa decomposition, that is, the connected Lie subgroup $AN$ of $G$ with solvable Lie algebra $\g{a}\oplus\g{n}$, acts simply transitively on $M$. 

A parabolic subalgebra $\g{q}$ of $\g{g}$ is a Lie subalgebra containing $\Ad(g)(\g{k}_0\oplus\g{a}\oplus\g{n})$, for some $g\in G$. Geometrically speaking, and except for $\g{g}$ itself, each parabolic subalgebra of $\g{g}$ is the Lie algebra of the stabilizer $G_x$ of some point at infinity $x$ in the ideal boundary of~$M$. The conjugacy classes of parabolic subalgebras of $\g{g}$ are parametrized by the subsets $\Phi$ of~$\Lambda$. Thus, for any subset $\Phi$ of simple roots, we will denote by $\Sigma_\Phi=\Sigma\cap\spann \Phi$ the root subsystem of $\Sigma$ generated by $\Phi$, and we will put $\Sigma_\Phi^+=\Sigma_\Phi\cap\Sigma^+$. Define the following Lie subalgebras of $\g{g}$:
\[
\g{l}_\Phi=\g{g}_0\oplus\biggl(\bigoplus_{\lambda\in\Sigma_\Phi}\g{g}_\lambda\biggr),\qquad \g{a}_\Phi=\bigcap_{\alpha\in\Phi}\ker\alpha, \qquad \g{n}_\Phi=\bigoplus_{\lambda\in\Sigma^+\setminus\Sigma^+_\Phi}\g{g}_\lambda,
\]
which are reductive, abelian and nilpotent, respectively. 
\begin{remark}
In this article, by a reductive subalgebra of a real semisimple Lie algebra~$\g{g}$ we understand a $\theta$-invariant Lie subalgebra $\g{h}$ of $\g{g}$, for some Cartan involution $\theta$ of $\g{g}$, or equivalently, a subalgebra $\g{h}$ that is canonically embedded with respect to a Cartan decomposition $\g{g}=\g{k}\oplus\g{p}$, namely $\g{h}=(\g{k}\cap\g{h})\oplus(\g{p}\cap\g{h})$. This  implies that $\g{h}$ is a reductive subalgebra in the sense that  $\ad\vert_\g{h}\colon\g{h}\to\g{gl}(\g{g})$ is completely reducible~\cite[\S6]{Bourbaki}, and hence, in particular $\g{h}$ is a reductive Lie algebra. If $G$ is a real semisimple Lie group, we will say that a Lie subgroup $H$ of $G$ is a reductive subgroup if its Lie algebra $\g{h}$ is a reductive subalgebra of $\g{g}$. If $H$ is a reductive subgroup of $G$, the orbit through the base point $o$ that determines the Cartan decomposition is totally geodesic, since $\g{p}\cap\g{h}$ is a Lie triple system.
\end{remark}
Let  $\g{a}^\Phi=\g{a}\ominus\g{a}_\Phi=\bigoplus_{\alpha\in\Phi}\R H_\alpha$ (note that the direct sum in this equation is not necessarily orthogonal). The centralizer and normalizer of $\g{a}_\Phi$ in $\g{g}$ is $\g{l}_\Phi$. Moreover, $[\g{l}_\Phi,\g{n}_\Phi]\subset \g{n}_\Phi$. Then, the Lie algebra $\g{q}_\Phi=\g{l}_\Phi\oplus\g{n}_\Phi$ is the parabolic subalgebra of $\g{g}$ associated with the subset $\Phi$ of~$\Lambda$. The decomposition $\g{q}_\Phi=\g{l}_\Phi\oplus\g{n}_\Phi$ is known as the Chevalley decomposition of $\g{q}_\Phi$.
We also define the reductive subalgebra $\g{m}_\Phi=\g{l}_\Phi\ominus\g{a}_\Phi$ of $\g{g}$, which normalizes $\g{a}_\Phi\oplus\g{n}_\Phi$. 
The decomposition $\g{q}_\Phi=\g{m}_\Phi\oplus\g{a}_\Phi\oplus\g{n}_\Phi$ is called the Langlands decomposition of the parabolic subalgebra $\g{q}_\Phi$. Every parabolic subalgebra of $\g{g}$ is conjugate to some of the subalgebras $\g{q}_\Phi$ for some $\Phi\subset\Lambda$ by means of an element in $K$. We will also consider the subalgebra $\g{k}_\Phi$ of $\g{k}$ given by 
\[
\g{k}_\Phi=\g{q}_\Phi\cap \g{k}=\g{l}_\Phi\cap\g{k}=\g{m}_\Phi\cap\g{k}=\g{k}_0\oplus\biggl(\bigoplus_{\lambda\in\Sigma^+_\Phi}\g{k}_\lambda\biggr),
\]
where  $\g{k}_\lambda=\pi_{\g{k}}(\g{g}_{\lambda})=\g{k}\cap(\g{g}_{-\lambda}\oplus\g{g}_\lambda)$, $\lambda\in\Sigma$, and $\pi_{\g{k}}$ is the orthogonal projection map onto~$\g{k}$. 

The subspace of $\g{p}$ given by
\[
\g{b}_\Phi=\g{m}_\Phi\cap\g{p}=\g{a}^\Phi\oplus\biggl(\bigoplus_{\lambda\in\Sigma^+_\Phi}\g{p}_\lambda\biggr),
\]
where $\g{p}_\lambda=\pi_{\g{p}}(\g{g}_{\lambda})=\g{p}\cap(\g{g}_{-\lambda}\oplus\g{g}_{\lambda})$, is a Lie triple system in $\g{p}$. This means that it corresponds to the tangent space at $o$ of some connected totally geodesic submanifold $B_\Phi$ of $M$. Associated with $\g{b}_\Phi$ one can consider the semisimple Lie algebra $\g{s}_\Phi=[\g{b}_\Phi,\g{b}_\Phi]\oplus \g{b}_\Phi$, where $[\g{b}_\Phi,\g{b}_\Phi]\subset\g{k}_\Phi$. 
Then, $\g{s}_\Phi=[\g{b}_\Phi,\g{b}_\Phi]\oplus \g{b}_\Phi$ is a Cartan decomposition of $\g{s}_\Phi$, and $\g{a}^\Phi$ is a maximal abelian subspace of~$\g{b}_\Phi$. Moreover, the set $\Sigma_\Phi\rvert_{\g{a}^\Phi}=\{\lambda\rvert_{\g{a}^\Phi}:\lambda\in\Sigma_\Phi\}$ is a root system for  $\g{s}_\Phi=[\g{b}_\Phi,\g{b}_\Phi]\oplus \g{b}_\Phi$ with respect to the maximal abelian subspace $\g{a}^\Phi$ of $\g{b}_\Phi$. Since $\lambda\rvert_{\g{a}_\Phi}=0$ for each $\lambda \in\Sigma_\Phi$, the restriction map $\Sigma_\Phi\to \Sigma_\Phi\rvert_{\g{a}^\Phi}$ is bijective. 
Thus we can naturally identify $\Sigma_\Phi$ (resp.\ $\Phi$) with a root system for $\g{s}_\Phi$ (resp.\ with a set of simple roots for $\g{s}_\Phi$) simply by restricting the roots to $\g{a}^\Phi$. We will implicitly do this identification in what follows. For example, if $\lambda\in\Sigma_\Phi$, the root space $(\g{s}_\Phi)_\lambda=(\g{s}_\Phi)_{\lambda\rvert_{\g{a}^\Phi}}$ of $\g{s}_\Phi$ coincides with the root space $\g{g}_\lambda$ of $\g{g}$, and the root space $(\g{s}_\Phi)_0$ of $\g{s}_\Phi$ corresponding to the $0$-weight is $(\g{s}_\Phi)_0=\g{s}_\Phi\cap\g{g}_0=(\g{s}_\Phi\cap\g{k}_0)\oplus\g{a}^\Phi$. In particular, we have the root space decomposition
\[
\g{s}_\Phi=(\g{s}_\Phi)_0\oplus\bigoplus_{\lambda\in \Sigma_\Phi}(\g{s}_\Phi)_\lambda=(\g{s}_\Phi\cap\g{k}_0)\oplus\g{a}^\Phi\oplus\biggl(\bigoplus_{\lambda\in \Sigma_\Phi}\g{g}_\lambda\biggr).
\]

Now we consider some groups associated with the Lie algebras described so far. {Write $A_\Phi$, $N_\Phi$ and $S_\Phi$ for the connected subgroups of $G$ with Lie algebras $\g{a}_\Phi$, $\g{n}_\Phi$ and $\g{s}_\Phi$, respectively. If we define the reductive group $L_\Phi=Z_G(\g{a}_\Phi)$ as the centralizer of $\g{a}_\Phi$ in $G$, then $Q_\Phi=L_\Phi N_\Phi$ is the parabolic subgroup of $G$ associated with the subset $\Phi$ of $\Lambda$. We also define $K_\Phi=L_\Phi\cap K=Z_K(\g{a}_\Phi)$ and $M_\Phi=K_\Phi S_\Phi$. Then $M_\Phi$ is a (possibly disconnected) closed reductive subgroup of $L_\Phi$, $K_\Phi$ is a maximal compact subgroup of $M_\Phi$, and $L_\Phi=M_\Phi \times A_\Phi$.

The orbit $S_\Phi\cdot o$ of the $S_\Phi$-action on $M=G/K$ through $o$ is the totally geodesic submanifold $B_\Phi$ of $M$ with $T_o B_\Phi\cong\g{b}_\Phi$. $B_\Phi$ is itself a symmetric space of noncompact type whose rank agrees with the cardinality $|\Phi|$ of $\Phi$, and is called the boundary component (or boundary symmetric space) of $M$ associated with $\Phi\subset \Lambda$. Moreover, 
\[
B_\Phi=S_\Phi\cdot o=M_\Phi\cdot o\cong M_\Phi/K_\Phi\cong S_\Phi/(S_\Phi\cap K_\Phi).
\]
Since $\g{s}_\Phi$ is $\theta$-invariant and $S_\Phi$ is connected, $(S_\Phi,S_\Phi\cap K)=(S_\Phi,S_\Phi\cap K_\Phi)$ is a symmetric pair, and in particular $\g{s}_\Phi$ is the Lie algebra of the isometry group of $B_\Phi$. 
We also have a diffeomorphism $A_\Phi\times N_\Phi\times M_\Phi\to Q_\Phi$ which induces a diffeomorphism $A_\Phi\times N_\Phi\times B_\Phi\to M$, $(a, n, m\cdot o)\mapsto (anm)\cdot o$, known as the horospherical decomposition of the symmetric space~$M$. Indeed, the action of $A_\Phi N_\Phi$ on $M$ turns out to be free, polar with section $B_\Phi$, and with mutually congruent minimal orbits~\cite{Ta:math_ann}, \cite{DV:imnr}. We recall that a Lie group action is called polar if there is a totally geodesic submanifold (called section) intersecting all orbits, and at every intersection point between an orbit and the section both submanifolds meet perpendicularly. In the case of the $A_\Phi N_\Phi$-action on $M$, the section $B_\Phi$ meets each orbit exactly once.

Let $A^\Phi N^\Phi$ be the connected subgroup of $AN$ with Lie algebra $\g{a}^\Phi\oplus\g{n}^\Phi$, where \[
\g{n}^\Phi=\bigoplus_{\lambda\in\Sigma^+_{\Phi}}\g{g}_\lambda.
\]
Then $B_\Phi=(A^\Phi N^\Phi)\cdot o$, since the solvable part of the Iwasawa decomposition of the real semisimple Lie algebra $\g{s}_\Phi$ is precisely $\g{a}^\Phi\oplus\g{n}^\Phi$, and hence,  $A^\Phi N^\Phi$ acts transitively on $B_\Phi$. Similarly, the connected subgroup $AN^\Phi$ of $AN$ with Lie algebra $\g{a}\oplus\g{n}^\Phi$ acts transitively on the totally geodesic submanifold $F_\Phi=L_\Phi\cdot o\cong(A_\Phi\cdot o) \times B_\Phi$ with Lie triple system~$\g{a}_\Phi\oplus\g{b}_\Phi$.

Later in this paper, we will need to know the relation between the parabolic subalgebras of the semisimple Lie algebra $\g{s}_\Phi$ and the parabolic subalgebras of $\g{g}$. Thus, let  $\Psi\subset \Phi\subset\Lambda$. Then we have the following inclusions of boundary components: $B_\Psi\subset B_\Phi\subset B_\Lambda=M$. By $\g{q}_{\Psi,\Phi}$ we will denote the parabolic subalgebra of $\g{s}_\Phi$ associated with the subset $\Psi$ of the set $\Phi$ of simple roots of $\g{s}_\Phi$. The corresponding Chevalley and Langlands decompositions can then be written as $\g{q}_{\Psi,\Phi}=\g{l}_{\Psi,\Phi}\oplus\g{n}_{\Psi,\Phi}=\g{m}_{\Psi,\Phi}\oplus\g{a}_{\Psi,\Phi}\oplus\g{n}_{\Psi,\Phi}$, where
\begin{align*}
\g{l}_{\Psi,\Phi}&=(\g{s}_\Phi)_0\oplus\Bigl(\bigoplus_{\lambda\in\Sigma_\Psi}\g{g}_\lambda\Bigr),\qquad &\g{n}_{\Psi,\Phi}&=\bigoplus_{\lambda\in\Sigma^+_\Phi\setminus\Sigma^+_\Psi}\g{g}_\lambda=\g{n}^\Phi\cap\g{n}_\Psi,
\\
\g{a}_{\Psi,\Phi}&=\bigcap_{\alpha\in\Psi}\ker \alpha\rvert_{\g{a}^\Phi}
=\g{a}^\Phi\ominus\Bigl(\bigoplus_{\alpha\in \Psi}\R H_\alpha\Bigr)
=\g{a}^\Phi\cap\g{a}_\Psi, \qquad &\g{m}_{\Psi,\Phi}&=\g{l}_{\Psi,\Phi}\ominus\g{a}_{\Psi,\Phi}.
\end{align*}
In particular, we have $\g{q}_{\Psi,\Phi}=\g{q}_\Psi\cap \g{s}_\Phi$. We also define 
\[
\g{k}_{\Psi,\Phi}=\g{k}\cap\g{l}_{\Psi,\Phi}=\g{k}_\Psi\cap\g{s}_\Phi,
\]
and the (possibly disconnected) Lie subgroups $L_{\Psi,\Phi}=Z_{S_\Phi}(\g{a}_{\Psi,\Phi})$, $K_{\Psi,\Phi}=L_{\Psi,\Phi}\cap K$ and $M_{\Psi,\Phi}=K_{\Psi,\Phi} S_\Psi$ of $S_\Phi$, whose respective Lie algebras are $\g{l}_{\Psi,\Phi}$, $\g{k}_{\Psi,\Phi}$, and $\g{m}_{\Psi,\Phi}$.

\subsection{Berndt and Tamaru's result} \label{subsec:classes}
We will now describe the different classes of cohomogeneity one actions that appear in the structure result by Berndt and Tamaru~\cite{BT:crelle}, which is also stated below in Theorem~\ref{th:BT}. We keep on using the notation described above.

\emph{Foliations of horospherical type.} Let $\ell$ be a one-dimensional subspace of $\g{a}$. Then the connected subgroup $H_\ell$ of $G$ with Lie algebra $\g{h}_\ell=(\g{a}\ominus\ell)\oplus\g{n}$ acts on $M$ with cohomogeneity one giving rise to a regular Riemannian foliation whose orbits are congruent to each other. The study of the orbit equivalence of these actions was carried out in~\cite{BT:jdg} for irreducible symmetric spaces $G/K$, and recently in~\cite{DS:isoparametric} and in~\cite{Solonenko:reducible} for the general case. It turns out that two choices $\ell$ and $\ell'$ yield orbit equivalent actions if and only if $\varphi(\ell)=\ell'$ for some linear automorphism $\varphi$ of $\g{a}$ of the form $\varphi=\Ad(k)\vert_\g{a}$ with $k\in N_{I(M)_o}(\g{a})$ and mapping the set $\{H_\lambda:\lambda\in\Sigma^+\}$ onto itself. Here, $I(M)_o$ is the isotropy subgroup at $o$ of the full isometry group $I(M)$ of $M$. We note that an automorphism $\varphi$ as above is precisely an automorphism of $\g{a}$ induced by an isometry of $M$ that is in turn induced by a symmetry of the Dynkin diagram of $\g{g}$. The requirement that $\varphi$ is induced by an isometry of $M$ (i.e.\ $\varphi=\Ad(k)\vert_\g{a}$ with $k\in N_{I(M)_o}(\g{a})$) is superfluous if $M$ is irreducible, but not if $M$ has two homothetic but not isometric factors.

\emph{Foliations of solvable type.} Let $\ell$ be a one-dimensional subspace of a simple root space $\g{g}_{\alpha_j}$, $\alpha_j\in\Lambda$. Then, the connected subgroup $H_j$ of $G$ with Lie algebra $\g{h}_j=\g{a}\oplus(\g{n}\ominus\ell)$ acts on $M$ with cohomogeneity one, and the orbits form a Riemannian foliation with exactly one minimal leaf (the one through $o$). Two lines  $\ell, \ell'$ in the same simple root space $\g{g}_{\alpha_j}$ always yield orbit equivalent actions. More generally, two choices $\ell\subset \g{g}_{\alpha_j}$ and $\ell'\subset \g{g}_{\alpha_k}$ produce orbit equivalent actions if and only if there is an isometry of $M$ induced by a Dynkin diagram symmetry mapping $\alpha_j$ to $\alpha_k$. See \cite{BT:jdg}, \cite{BDT:jdg} and~\cite{Solonenko:reducible} for more information.

\emph{Actions with a totally geodesic singular orbit.} Cohomogeneity one actions with a totally geodesic singular orbit on irreducible symmetric spaces of noncompact type $M$ have been classified up to orbit equivalence (by isometries of the full isometry group $I(M)$) in~\cite{BT:tohoku}. It follows that a totally geodesic submanifold $F$ of an irreducible space $M$ is the singular orbit of a cohomogeneity one action on $M=G/K$ if and only if $F$ is a reflective submanifold whose complementary reflective submanifold has rank one, or $F$ is one of five exceptions (mysteriously related to the group $\s{G}_2$). These actions are induced by maximal proper reductive subgroups of $G$. The converse is not true in general. However, if $L$ is a maximal proper reductive subgroup of $G$, and $H$ is a subgroup of $L$ acting on $M$ with cohomogeneity one, then the actions of $H$ and $L$ have the same orbits, one of which is totally geodesic, being singular if $M$ is irreducible and different from a real hyperbolic~space. 

\emph{Canonical extension.} Consider the Langlands decomposition $Q_\Phi=M_\Phi A_\Phi N_\Phi$ of a maximal proper parabolic subgroup $Q_\Phi$ of $G$ obtained by the choice of some subset $\Phi$ of $\Lambda$. The corresponding boundary component $B_\Phi$ is a noncompact symmetric space of rank $\lvert \Phi\rvert$ embedded in $M$ as a totally geodesic submanifold. Since $\g{s}_\Phi$ is the Lie algebra of the isometry group of $B_\Phi$, it follows that any isometric action (of a connected Lie group) on $B_\Phi$ has the same orbits as the action of some connected Lie subgroup of $S_\Phi$. Let $H_\Phi$ be a Lie subgroup of $S_\Phi$ acting on $B_\Phi$ with cohomogeneity one. 
 Then $H_\Phi^\Lambda=H_{\Phi} A_\Phi N_\Phi$ is a connected Lie subgroup of $Q_\Phi$ acting on $M$ with cohomogeneity one. We say that this action has been obtained by canonical extension of a cohomogeneity one action on the boundary component $B_\Phi$. If two connected closed subgroups $H_{\Phi}$, $H_{\Phi}'$ of $S_\Phi$ act on $B_\Phi$ with cohomogeneity one and their actions are orbit equivalent by an isometry in the connected component of the identity $I^0(B_\Phi)$ of the isometry group of $B_\Phi$ (or equivalently, by an element in $S_\Phi$), then their canonical extensions to $M$ are orbit equivalent by an element of $G$ as well; see~\cite{BT:crelle}. The orbits of the $A_\Phi N_\Phi$-action on $M$ are all minimal, but rarely totally geodesic. In fact, they are totally geodesic if and only if $\Phi$ and $\Lambda\setminus\Phi$ are orthogonal~\cite{Ta:math_ann}. This implies that the canonical extension of an $H_\Phi$-action on $B_\Phi$ will have a minimal orbit if there is a minimal $H_\Phi$-orbit on $B_\Phi$, but will only have a totally geodesic orbit if $\Phi$ and $\Lambda\setminus\Phi$ are orthogonal and there is a totally geodesic $H_\Phi$-orbit on $B_\Phi$, cf.~\cite{DV:imnr}.

\emph{Nilpotent construction.} This construction method was introduced in \cite{BT:crelle} and revisited in~\cite{BD:tg} and~\cite{Solonenko}. Here, it will be enough to consider subsets 
\[
\Phi=\Lambda\setminus\{\alpha_j\}
\]
of $\Lambda$ with cardinality $\lvert \Lambda\rvert-1$.  
Thus, $Q_\Phi=L_\Phi N_\Phi$ is a maximal proper parabolic subgroup of $G$. Recall also that $L_\Phi=M_\Phi A_\Phi$. Consider the vector $H^j\in \g{a}$ such that $\alpha_k(H^j)=\delta_k^j$ is the Kronecker delta of $j$ and $k$. Then $H^j$ induces a gradation $\bigoplus_{\nu\geq 1}\g{n}^\nu_\Phi$ of $\g{n}_\Phi$, where $\g{n}_\Phi^\nu$ is the sum of all root spaces corresponding to positive roots $\lambda\in \Sigma^+$ with $\lambda(H^j)=\nu$. In fact, $\lambda(H^j)=\nu$ if and only if the coefficient of $\alpha_j$ in the expression of $\lambda$ as a linear combination of simple roots is precisely $\nu$. Let $\g{v}$ be a subspace of $\g{n}^1_\Phi$ with $\dim\g{v}\geq 2$. Then $\g{n}_{\Phi,\g{v}}=\g{n}_\Phi\ominus\g{v}$ is a subalgebra of $\g{n}$. Let $N_{\Phi,\g{v}}$ be the corresponding connected subgroup of $N_\Phi$. Denote by $\Theta$ the Cartan involution of $G$ associated with $\theta$. If $\g{v}$ satisfies the following conditions:
\begin{enumerate}[{\rm (NC1)}]
\item  $N^0_{M_\Phi}(\g{n}_{\Phi,\g{v}})=\Theta N^0_{M_\Phi}(\g{v})$ acts transitively on $B_\Phi=M_\Phi \cdot o$, \label{N1}
\item $N^0_{K_\Phi}(\g{n}_{\Phi,\g{v}})=N^0_{K_\Phi}(\g{v})$ acts transitively on the unit sphere of $\g{v}$, \label{N2}
\end{enumerate}
then $H_{\Phi,\g{v}}=N^0_{L_\Phi}(\g{n}_{\Phi,\g{v}})N_{\Phi,\g{v}}=N^0_{M_\Phi}(\g{n}_{\Phi,\g{v}})A_{\Phi}N_{\Phi,\g{v}}$ is a connected subgroup of $Q_\Phi$ that acts on $M$ with cohomogeneity one and singular orbit $H_{\Phi,\g{v}}\cdot o$. (Note that the equalities $N^0_{M_\Phi}(\g{n}_{\Phi,\g{v}})=\Theta N^0_{M_\Phi}(\g{v})$ and $N^0_{K_\Phi}(\g{n}_{\Phi,\g{v}})=N^0_{K_\Phi}(\g{v})$ are satisfied for any $\g{v}$.) We say that the $H_{\Phi,\g{v}}$-action on $M$ has been obtained by nilpotent construction from the choice of $\alpha_j$ and $\g{v}$. Moreover, if $\g{v}$ and $\g{v}'$ are two such subspaces which are conjugate by an element $k\in K_\Phi$, then the cohomogeneity one actions by $H_{\Phi,\g{v}}$ and $H_{\Phi,\g{v}'}$ on $M$ are orbit equivalent via conjugation by $k$. Observe that condition (NC1) in the introduction is slightly different than the one here. The former is quicker to introduce, whereas the latter is more manageable in certain situations. It was shown in~\cite[Proposition~3.2]{BD:tg} that both descriptions are equivalent, due to the fact that 
$N^0_{L_\Phi}(\g{n}_{\Phi,\g{v}})= N^0_{M_\Phi}(\g{n}_{\Phi,\g{v}})A_\Phi$.
We also note that a subspace $\g{v}$ satisfying (NC1) (resp.\ (NC2)) has been called admissible (resp.\  protohomogeneous) in~\cite{Solonenko}.

The main result of \cite{BT:crelle} guarantees that all cohomogeneity one actions on irreducible symmetric spaces of noncompact type can be obtained by one of the five methods described above. However, note that all these construction methods keep their validity for reducible symmetric spaces.

\begin{theorem}\cite{BT:crelle}\label{th:BT}
	Let $M=G/K$ be a connected irreducible Riemannian symmetric space of noncompact type and rank $r$, and let $H$ be a connected closed subgroup of $G$ acting on $M$ with cohomogeneity one. Then one of the following statements holds:
	\begin{enumerate}[{\rm (1)}]
		\item The orbits form a Riemannian foliation on $M$, and one of the following two cases~holds:
		\begin{enumerate}[{\rm (i)}]
			\item The $H$-action is orbit equivalent to a foliation of horospherical type induced by the action of $H_\ell$ for some one-dimensional subspace $\ell$ of $\g{a}$.
			\item The $H$-action is orbit equivalent to a foliation of solvable type induced by the action of $H_j$ for some $j\in\{1,\dots, r\}$.
		\end{enumerate}
		\item The $H$-action has exactly one singular orbit, and one of the following two cases holds:
		\begin{enumerate}[{\rm (i)}]
			\item $H$ is contained in a maximal proper reductive subgroup $L$ of $G$, the actions of $H$ and $L$ have the same orbits, and the singular orbit is totally geodesic. 
			\item Up to conjugation by an element of $G$, the group $H$ is contained in a maximal proper parabolic subgroup $Q_\Phi$ of $G$, for some $\Phi\subset\Lambda$ with cardinality $\lvert \Phi\rvert=\lvert \Lambda\rvert -1$, and one of the following two subcases holds:
			\begin{enumerate}[{\rm (a)}]
				\item The $H$-action is orbit equivalent to the canonical extension of a cohomogeneity one action with a singular orbit on the maximal proper boundary component $B_\Phi$ of $M$.
				\item The $H$-action is orbit equivalent to the action of a group $H_{\Phi,\g{v}}$ obtained by nilpotent construction, for some subspace $\g{v}\subset\g{n}^1_\Phi$ with $\dim \g{v}\geq 2$.
			\end{enumerate}
		\end{enumerate}
	\end{enumerate}
\end{theorem}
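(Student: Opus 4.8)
The plan is to split the argument according to the structure of the orbit space $M/H$, which is a connected one-dimensional manifold possibly with boundary, and then to analyze the singular-orbit case through the position of $H$ relative to the parabolic subgroups of $G$. First I would observe that, since $M$ is a Hadamard manifold (complete, simply connected, nonpositive curvature) and the $H$-action is proper of cohomogeneity one, the orbit space is one of $\R$, the circle, $[0,\infty)$ or $[0,1]$. Simple connectedness of $M$ rules out the circle, and an orbit space $[0,1]$ would exhibit $M$ as the union of two disk bundles over the two singular orbits glued along a regular orbit, forcing $M$ to be compact, which is impossible since $M$ is diffeomorphic to a Euclidean space. Hence $M/H$ is either $\R$ (no singular orbit) or $[0,\infty)$ (exactly one singular orbit), which is precisely the dichotomy between statements (1) and (2).

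In the foliation case $M/H\cong\R$, the orbits form a homogeneous codimension one Riemannian foliation of $M$. The classification of such foliations on irreducible symmetric spaces of noncompact type is exactly the content of~\cite{BT:jdg}: up to orbit equivalence any homogeneous codimension one foliation is either of horospherical type, induced by $H_\ell$ with $\g{h}_\ell=(\g{a}\ominus\ell)\oplus\g{n}$ for a line $\ell\subset\g{a}$, or of solvable type, induced by $H_j$ with $\g{h}_j=\g{a}\oplus(\g{n}\ominus\ell)$ for a line $\ell\subset\g{g}_{\alpha_j}$. This yields subcases (1)(i) and (1)(ii), so I would simply invoke that result here.

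The substantial work is the singular-orbit case, where the key algebraic dichotomy is whether $H$ fixes a point at infinity. The plan is to use the real Borel--Tits theory: if the unipotent radical $R_u(H)$ were nontrivial, then its Lie algebra is a nonzero ad-nilpotent ideal of $\g{h}$, and the normalizer in $\g{g}$ of such a subalgebra is contained in a proper parabolic subalgebra; since $\g{h}$ normalizes its own radical, $H$ would then lie in a proper parabolic subgroup. Contrapositively, if $H$ lies in no proper parabolic subgroup then $H$ is reductive. In the reductive case $\g{p}\cap\g{h}$ is a Lie triple system, so $H\cdot o$ is totally geodesic, and a dimension and cohomogeneity count shows that a maximal proper reductive subgroup $L$ containing $H$ still acts with cohomogeneity one and the same orbits, the totally geodesic one being singular (the apparent exception of $\R\s{H}^n$ is excluded because there a reductive subgroup produces a foliation, contradicting our standing assumption of a singular orbit). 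This gives statement (2)(i), whose refinement into reflective submanifolds and the five exceptions is the classification of~\cite{BT:tohoku}, which I would cite.

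It remains to treat $H$ contained in a proper parabolic, which we may enlarge to a maximal one $Q_\Phi$ with $\Phi=\Lambda\setminus\{\alpha_j\}$ after conjugation by an element of $G$. Using the horospherical decomposition $M\cong A_\Phi\times N_\Phi\times B_\Phi$ and the Langlands decomposition $\g{q}_\Phi=\g{m}_\Phi\oplus\g{a}_\Phi\oplus\g{n}_\Phi$, the plan is to measure the ``defect'' of $\g{h}$ along the polar part $\g{a}_\Phi\oplus\g{n}_\Phi$ and along the boundary direction $\g{b}_\Phi$. Exploiting that $A_\Phi N_\Phi$ acts freely and polarly with section $B_\Phi$ and minimal orbits, and imposing that regular $H$-orbits have codimension one, one shows that this defect is forced into one of two mutually exclusive shapes: either $H\supseteq A_\Phi N_\Phi$ and $H=H_\Phi A_\Phi N_\Phi$ with $H_\Phi\subseteq S_\Phi$ acting with cohomogeneity one and a singular orbit on $B_\Phi$, giving the canonical extension of subcase (2)(ii)(a); or the missing part of $\g{h}$ is a subspace $\g{v}\subseteq\g{n}_\Phi^1$ of the first nilpotent layer with $\dim\g{v}\geq 2$ and $\g{h}=N_{\g{l}_\Phi}(\g{n}_\Phi\ominus\g{v})\oplus(\g{n}_\Phi\ominus\g{v})$, giving the nilpotent construction of subcase (2)(ii)(b). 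The hard part will be precisely this parabolic analysis: controlling the interaction between the reductive Levi factor $M_\Phi$, which governs the induced action on $B_\Phi$, and the nilradical $N_\Phi$, and proving that no intermediate configuration can produce a cohomogeneity one action with a singular orbit. This is where the projection of $\g{h}$ onto $\g{m}_\Phi$, the layerwise analysis of $\g{h}\cap\g{n}_\Phi$, and the transitivity conditions (NC1) and (NC2) on the slice must all be brought together.
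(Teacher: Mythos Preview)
This theorem is not proved in the present paper; it is quoted from \cite{BT:crelle} as background (note the citation immediately following the theorem header and the absence of any proof environment). The paper only supplements it with Remarks~\ref{rem:BDT}--\ref{rem:scaling_reducible}, which record refinements of the statement needed later (that the orbit equivalences in (1) and (2)(ii) are realized by elements of $G$, and that irreducibility is not needed in (1) and in the parabolic subcase (2)(ii)).

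Your sketch is, in outline, the strategy of the original Berndt--Tamaru argument: the orbit-space dichotomy is standard, the foliation case is \cite{BT:jdg}, and in the singular case one reduces to the two subcases via the fact (attributed to Mostow~\cite{Mostow} in the paper, see the proof of Theorem~\ref{th:classification}) that a maximal proper subalgebra of a real simple Lie algebra is either reductive or parabolic. Two small caveats. First, your dichotomy is phrased as ``$H$ lies in a proper parabolic or not'' and you invoke Borel--Tits for the latter; the original argument instead picks a maximal proper subgroup containing $H$ and branches on its type, which is closer to how (2)(i) and (2)(ii) are worded (they are not mutually exclusive). Second, in the parabolic branch you write that $H$ itself equals $H_\Phi A_\Phi N_\Phi$ or $H_{\Phi,\g{v}}$; the theorem only asserts orbit equivalence after conjugation, and Remark~\ref{rmk:th:BT} stresses exactly this point. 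The hard parabolic analysis you describe is \cite[Theorem~5.8]{BT:crelle}, which the present paper invokes verbatim rather than reproving.
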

\begin{remark}\label{rem:BDT}
	In case Theorem~\ref{th:BT}~(1), either (1)-(i) or (1)-(ii) holds even if $M$ is not necessarily irreducible. This was proved in~\cite{BDT:jdg}. Moreover, it follows from the proof in~\cite{BDT:jdg} that the orbit equivalence is obtained by an isometry $g\in G$ (i.e., in the connected component of the identity of the isometry group of $M$).
\end{remark}

\begin{remark}\label{rmk:th:BT}
	In case~(2)-(ii) of Theorem~\ref{th:BT}, it is stated that $H\subset gQ_\Phi g^{-1}$, for some $g\in G$. This is not explicitly stated in~\cite[Theorem~1.1]{BT:crelle}, but implicitly understood. This element $g\in G$ is precisely the one that gives the orbit equivalence stated in items (a) and (b) of case~(2)-(ii), as follows from the proof in~\cite{BT:crelle} (specifically, the final paragraph of the proof of~\cite[Theorem~3.2]{BT:crelle}). This actually shows that the orbit equivalence claimed in \cite[Theorem~5.8]{BT:crelle} is in fact an equality of orbit foliations. We also remark that, as stated in~\cite[Theorem~5.8]{BT:crelle}, even without the assumption that $M$ is irreducible, if $H$ is in the situation described in case~(2)-(ii) (i.e., $H$ is a connected subgroup of a maximal proper parabolic subgroup of $G$ and acting with cohomogeneity one on $M$), then one of the cases (a) and (b) above hold.
	This facts will be important later on in our proof of Theorem~\ref{th:classification}.
\end{remark}

\begin{remark}\label{rem:scaling_reducible}
	If $M=M_1\times\dots\times M_s$ is reducible (where each $M_i=G_i/K_i$ is irreducible), any symmetric metric on $M$ is induced by (the restriction to $\g{p}\cong T_oM$ of) a weighted sum $a_1 \cal{B}_{\g{g}_1}+\dots +a_s\cal{B}_{\g{g}_s}$ of the Killing forms of the simple factors of $\g{g}$, where $a_1,\dots,a_s>0$. The proofs of Theorem~\ref{th:BT}~(1) in~\cite{BDT:jdg} and of Theorem~\ref{th:BT}~(2)-(ii) in~\cite[Theorem~5.8]{BT:crelle} assume that the metric on $M$ is induced by the Killing form of $\g{g}$ (i.e., $a_1=\dots=a_s=1$ in the previous expression). However, these results hold for an arbitrary symmetric metric on $M$ (i.e., for any $a_1,\dots,a_s>0$). Indeed, given symmetric metrics $g$, $g'$ on $M=G/K$, we have $G=I^0(M,g)=I^0(M,g')$ (possibly after effectivization), and then, a connected group of isometries of $(M,g)$ acts isometrically with cohomogeneity one on $(M,g)$ if and only if it acts isometrically with cohomogeneity one on $(M,g')$. 	Moreover, the orbit equivalences in Theorem~\ref{th:BT}~(1) and (2)-(ii) are achieved by elements of $G$, as recalled in Remarks~\ref{rem:BDT} and~\ref{rmk:th:BT}, and hence they hold independently of the symmetric metric on $M=G/K$.
\end{remark}

\section{Maximal subgroups and diagonal actions}\label{sec:diagonal}
In this section we show that a group acting with cohomogeneity one on a reducible symmetric space of noncompact type is contained in a maximal proper subgroup that either splits nicely with respect to the decomposition into irreducible factors, or is determined by a diagonal action of a maximal proper reductive subgroup on the product of two rank one irreducible factors.

Let $M=G/K$ be a symmetric space of noncompact type. Let $\g{g}=\g{g}_1\oplus\dots\oplus \g{g}_s$ be the decomposition of the real semisimple Lie algebra $\g{g}$ into simple ideals, and $M=M_1\times \dots \times M_s=G_1/K_1\times \dots \times G_s/K_s$ the corresponding decomposition of $M$ into irreducible symmetric spaces of noncompact type. For each $i\in\{1,\dots,s\}$, we have the Cartan decomposition $\g{g}_i=\g{k}_i\oplus\g{p}_i$.

Let $H$ be a connected closed Lie subgroup of $G$. Let $\g{l}$ be a maximal proper Lie subalgebra of $\g{g}$ containing $\g{h}$ and with corresponding connected Lie subgroup $L$ of $G$. Then, it follows from~\cite[Theorem~15.1, p.~235]{Dynkin} (cf.~\cite[Theorem~2.1]{Kollross:tams}) that either 
\[
\g{l}=\bigoplus_{\begin{subarray}{c}i=1\\i\neq j\end{subarray}}^s \g{g}_i \oplus \g{l}_j
\]
for an index $j\in\{1,\dots, s\}$ and a maximal proper subalgebra $\g{l}_j$ of $\g{g}_j$, or
\[
\g{l}=\bigoplus_{\begin{subarray}{c}i=1\\i\neq j,k\end{subarray}}^s \g{g}_i \oplus \g{g}_{j,k,\sigma},
\]
for two indices $j,k\in\{1,\dots, s\}$, $j\neq k$, an isomorphism $\sigma\colon\g{g}_j\to \g{g}_k$, and where $\g{g}_{j,k,\sigma}=\{X+\sigma X: X\in \g{g}_j\}$. In this case, $\g{g}_{j,k,\sigma}$ and $\g{l}$ are reductive subalgebras of $\g{g}$, and $\g{g}_{j,k,\sigma}$ is a maximal proper reductive subalgebra of $\g{g}_j\oplus\g{g}_k$.

Let us focus on the second case, namely, the maximal proper subalgebra $\g{l}$ has a simple ideal which is diagonal with respect to the decomposition of $\g{g}$ into simple ideals. Let us recall first that there is a natural bijective correspondence between homothety classes of irreducible symmetric spaces of noncompact type and noncompact real simple Lie algebras. Hence, since $\g{g}_j$ and $\g{g}_k$ are isomorphic, the corresponding irreducible symmetric spaces $M_j$ and $M_k$ are homothetic. 
Let $G_{j,k,\sigma}$ be the connected closed subgroup of $G_j\times G_k$ with Lie algebra $\g{g}_{j,k,\sigma}$. Then, according to \cite[Proposition~3.1]{BT:crelle} (it will also follow from Theorem~\ref{th:diag_cohom} below), the action of $G_{j,k,\sigma}$ on $M_j\times M_k$ is not transitive. Hence, since $H\subset L$, if $H$ acts with cohomogeneity one on $M$, the actions of $H$ and $L$  have the same orbits.

It only remains to decide for which real simple Lie algebras $\g{g}_j\cong\g{g}_k$ and corresponding isomorphism $\sigma$ the action of $L$ on $M$ is indeed of cohomogeneity one, and not higher. Equivalently, we have to decide when the action of $G_{j,k,\sigma}$ on $M_j\times M_k$ has cohomogeneity one. The following result answers this question. We recall that an action is said to be hyperpolar if it is polar and its sections are flat.

\begin{theorem}\label{th:diag_cohom}
The action of $G_{j,k,\sigma}$ on $M_j\times M_k$ is hyperpolar and its cohomogeneity coincides with the rank of $M_j$.
\end{theorem}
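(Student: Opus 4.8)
The plan is to realize the diagonal action as the isotropy representation of a larger symmetric structure. Write $N=M_j$ and consider the product $N\times N$ (we may assume, after rescaling, that $M_k$ is isometric to $M_j=N$, since rescaling a factor does not change the orbits, only the metric). The group $G_{j,k,\sigma}\cong G_j$ acts on $N\times N$ by $g\cdot(p,q)=(gp,\sigma(g)q)$. The key observation is that $N\times N$ with this $G_j$-action is equivariantly diffeomorphic to the tangent bundle picture: the orbit map through $(o,o)$ has differential $X\mapsto (X,\bar\sigma X)$ on $\g{p}_j$, where $\bar\sigma$ is the isometry on $\g{p}_j\cong T_oN$ induced by $\sigma$; hence the normal space at $(o,o)$ to the orbit $G_{j,k,\sigma}\cdot(o,o)$ is the anti-diagonal $\{(Y,-\bar\sigma Y):Y\in\g{p}_j\}\cong\g{p}_j$. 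First I would identify this normal space and observe that the slice representation of the isotropy group $K_{j,k,\sigma}=\{k+\sigma k:k\in K_j\}\cong K_j$ on it is precisely the isotropy representation of the symmetric pair $(G_j,K_j)$, i.e. the $K_j$-action on $\g{p}_j$.

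The heart of the argument is then a standard but slightly delicate comparison: the $G_{j,k,\sigma}$-action on $N\times N$ and the $G_j$-action on $N$ "share" a slice. Concretely, I would exhibit a totally geodesic submanifold $\Sigma$ of $N\times N$ through $(o,o)$ with $T_{(o,o)}\Sigma$ equal to (the $G_{j,k,\sigma}$-isotropy-equivariant image of) a maximal flat $\g{a}_j\subset\g{p}_j$ of $N$, embedded anti-diagonally; that is, $\Sigma=\{(\Exp_o H,\Exp_o(-\bar\sigma H)):H\in\g{a}_j\}$, which is a flat totally geodesic submanifold since $\g{a}_j$ is abelian and $\bar\sigma$ an isometry. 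I would show $\Sigma$ meets every $G_{j,k,\sigma}$-orbit: given $(p,q)\in N\times N$, first use the $G_j$-transitivity on the first factor to move $p$ to $o$, reducing to points $(o,q)$; the isotropy $K_{j,k,\sigma}\cong K_j$ then acts on the remaining factor as $\sigma(K_j)$-conjugation, and since $K_j$ acts on $N$ with the geodesic $\Exp_o\g{a}_j$ as a section (Cartan's theorem: every point of $N$ is $K_j$-conjugate to a point of $\Exp_o\g{a}_j$), we can move $q$ into $\Exp_o(-\bar\sigma\g{a}_j)$; this places the point on $\Sigma$. Perpendicularity of $\Sigma$ to the orbits at intersection points follows from the anti-diagonal description of normal spaces together with $K_j$-equivariance, and flatness of $\Sigma$ gives hyperpolarity. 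Since $\dim\Sigma=\dim\g{a}_j=\mathrm{rank}\,M_j$, this also pins down the cohomogeneity, provided we check the section is not redundant — i.e. that generic orbits actually have codimension $\mathrm{rank}\,M_j$, not less; this is where one invokes that $K_j$ acts on $\g{a}_j$ with the Weyl group as the only residual identifications, so the section meets a principal orbit in a discrete (Weyl-orbit) set and the cohomogeneity is exactly $\dim\g{a}_j$.

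I expect the main obstacle to be the rigorous verification that $\Sigma$ is a \emph{section} in the polar sense — that is, both that it meets all orbits (the surjectivity-of-the-slice argument above, which needs Cartan's maximal-torus-type theorem for $(G_j,K_j)$ applied carefully to the twisted factor) and, more subtly, that it is totally geodesic \emph{and} perpendicular to orbits at \emph{every} intersection point, not just at $(o,o)$. The perpendicularity at a general point $(\Exp_o H,\Exp_o(-\bar\sigma H))$ requires transporting the computation at $(o,o)$ via an element of $G_{j,k,\sigma}$ or via the symmetry of $N\times N$; here one uses that $\Exp_o H$ and $\Exp_o(-\bar\sigma H)$ lie in flats and that the tangent space to $\Sigma$ there is again anti-diagonal relative to a parallel-transported flat. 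A clean way to organize this is to note that $\Sigma$ is itself the fixed-point set of the involution $(p,q)\mapsto(s_q\text{-type reflection})$ — more precisely, $\Sigma$ is a connected component of the fixed point set of the isometry $(p,q)\mapsto(\iota(q),\iota(p))$ composed with a Cartan-type involution, where $\iota$ identifies the two factors via $\bar\sigma$; fixed-point sets of isometries are automatically totally geodesic, and the normal space to such a reflective submanifold at any point is the $(-1)$-eigenspace of the differential, which one checks coincides with (the image of) $\g{p}_j$, hence contains the orbit tangent space — giving perpendicularity globally. Finally, I would record that hyperpolarity together with the explicit section of dimension $\mathrm{rank}\,M_j$ forces the cohomogeneity to equal $\mathrm{rank}\,M_j$, completing the proof.
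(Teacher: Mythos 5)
Your proposal has the same skeleton as the paper's proof: identify the normal space at $(o_j,o_k)$ with the anti-diagonal $\{X-\sigma X: X\in\g{p}_j\}$, observe that the slice representation of $\g{k}_{j,k,\sigma}$ on it is equivalent to the isotropy representation of $M_j$, and take the anti-diagonally embedded flat $\Xi=\Exp(\{H-\sigma H:H\in\g{a}_j\})\cdot(o_j,o_k)$ as the candidate section. Those computations are correct, and your argument that $\Xi$ meets every orbit (move the first factor to $o_j$ by transitivity, then use the Cartan decomposition of the twisted second factor to push $q$ into the exponential of a maximal flat) is a nice elementary substitute for what the paper gets for free from a cited criterion. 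The cohomogeneity statement also comes out correctly: once polarity with section $\Xi$ is known, the cohomogeneity equals $\dim\Xi=\operatorname{rank} M_j$ automatically, so your Weyl-group discussion is not needed (the paper instead reads the cohomogeneity off the slice representation at $(o_j,o_k)$, which is a point of minimum orbit type by \cite[Proposition~5.2]{DDK:mathz}).

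The genuine weak point is global perpendicularity of $\Xi$ to the orbits, and neither of your two suggested fixes closes it as stated. Conjugating by an element of $G_{j,k,\sigma}$ does not help, because the points $\Exp(H-\sigma H)\cdot(o_j,o_k)$ with $H\neq 0$ do not lie on the orbit of $(o_j,o_k)$, so the computation at the base point cannot be transported that way. The reflective fixed-point set you describe is the full anti-diagonal $\{(\Exp_o Y,\Exp_o(-\bar\sigma Y)):Y\in\g{p}_j\}$; its reflective structure gives perpendicularity to the \emph{diagonal} orbit at $(o_j,o_k)$, but what is needed is perpendicularity of the flat $\Xi$ to the \emph{principal} orbits at its other points, which does not follow. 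The standard ways to close this are either a Killing-field computation along the geodesics of $\Xi$ (using that $\langle Z^*,E\rangle$ for a Killing field $Z^*$ and a parallel field $E$ satisfies a second-order ODE whose initial data vanish precisely because of the condition $\langle\g{g}_{j,k,\sigma},\g{a}_{j,k,\sigma}\oplus[\g{a}_{j,k,\sigma},\g{a}_{j,k,\sigma}]\rangle=0$), or, as the paper does, simply to invoke \cite[Proposition~2.3]{DDK:mathz}, whose two hypotheses --- that $\g{a}_{j,k,\sigma}$ is a section of the slice representation and that the orthogonality condition above holds (trivially, since $\g{a}_{j,k,\sigma}$ is abelian and normal to the orbit) --- you have already verified. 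One last caveat: your reduction to the isometric case by ``rescaling a factor'' is harmless for the cohomogeneity count but not for hyperpolarity, since polarity is a metric notion; in the merely homothetic case the correct normal space is $\{X-c\,\sigma X\}$ with $c=\lambda_j/\lambda_k$, and the argument must be run with that adjusted slope (the paper makes the same simplification and flags the same adjustment).
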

\begin{proof}
Without loss of generality, we will assume that $\sigma(\g{k}_j)=\g{k}_k$. In other words, the base point $o_k$ we consider in $M_k$ is the one whose isotropy Lie algebra is $\sigma(\g{k}_j)$. Then the Lie algebra of the isotropy group at $(o_j,o_k)$ is $\g{k}_{j,k,\sigma}=\{T+\sigma T: T\in \g{k}_j\}\cong \g{k}_j\cong\g{k}_k$.
Moreover, the orbit of $G_{j,k,\sigma}$ through $(o_j,o_k)\in M_j\times M_k$ is singular and of minimum orbit type, according to the proof of~\cite[Proposition~5.2]{DDK:mathz}.

The cohomogeneity of the action of $G_{j,k,\sigma}$ agrees with the cohomogeneity of the slice representation at $(o_j,o_k)$. We calculate this first.
We have
\[
T_{(o_j,o_k)}(G_{j,k,\sigma}\cdot (o_j,o_k))\cong \pi_{\g{p}_j\oplus\g{p}_k}(\g{g}_{j,k,\sigma})=\{X+\sigma X: X\in\g{p}_j\},
\]
where $\pi_{\g{p}_j\oplus\g{p}_k}=((\id-\theta)/2)\vert_{\g{g}_j\oplus\g{g}_k}$ is the projection map onto $\g{p}_j\oplus\g{p}_k$. For simplicity we will assume that $M_j$ and $M_k$ are isometric, but the proof holds with minor changes if they are only homothetic (cf.~Remark~\ref{rem:factors_hyperpolar}). The normal space to $G_{j,k,\sigma}\cdot (o_j,o_k)$ is 
\[
\nu_{(o_j,o_k)}(G_{j,k,\sigma}\cdot (o_j,o_k))\cong\{X-\sigma X: X\in\g{p}_j\}.
\]
Now, the adjoint action of $\g{k}_{j,k,\sigma}$ on $\nu_{(o_j,o_k)}(G_{j,k,\sigma}\cdot (o_j,o_k))$ is given by
\[
\ad(T+\sigma T)(X-\sigma X)=[T,X]-\sigma[T,X],
\]
for $T+\sigma T\in \g{k}_{j,k,\sigma}$ and $X-\sigma X\in \nu_{(o_j,o_k)}(G_{j,k,\sigma}\cdot (o_j,o_k))$. This representation is clearly equivalent to the adjoint action of $\g{k}_j$ on $\g{p}_j$. Therefore, the slice representation at $(o_j,o_k)$ is equivalent to the isotropy representation of the symmetric space $M_j$, whose cohomogeneity is precisely the rank of $M_j$.

Let $\g{a}_{j,k,\sigma}=\{X-\sigma X:X\in\g{a}_j\}$ and $\Xi=\Exp(\g{a}_{j,k,\sigma})\cdot (o_j,o_k)\subset M_j\times M_k$, where $\g{a}_j$ is a maximal abelian subspace of $\g{p}_j$. As usual we can identify $T_{(o_j,o_k)}\Xi$ with $\g{a}_{j,k,\sigma}$, and this is clearly a section for the slice representation of the  $G_{j,k,\sigma}$-action on $M_j\times M_k$ at $(o_j,o_k)$. It is also clear that $\langle \g{g}_{j,k,\sigma},\g{a}_{j,k,\sigma}\oplus[\g{a}_{j,k,\sigma},\g{a}_{j,k,\sigma}]\rangle=0$, since $\g{a}_{j,k,\sigma}\subset \nu_{(o_j,o_k)}(G_{j,k,\sigma}\cdot (o_j,o_k))$  is abelian.
Then, \cite[Proposition~2.3]{DDK:mathz} guarantees that the $G_{j,k,\sigma}$-action is polar with section $\Xi$. Since $\g{a}_{j,k,\sigma}$ is abelian, then $\Xi$ is flat, which shows that the action is~hyperpolar.
\end{proof}

\begin{remark}
	Being $G_{j,k,\sigma}$ a reductive subgroup of $G_j\times G_k$, its action on $M_j\times M_k$  induces an action on a compact dual symmetric space of  $M_j\times M_k$, see~\cite{K:dual}. Such dual action turns out to be an indecomposable, hyperpolar, Hermann action in the sense of~\cite{K:reducible}.
\end{remark}

\begin{remark}\label{rem:factors_hyperpolar}
The singular orbit of $G_{j,k,\sigma}$ through $(o_j,o_k)\in M_j\times M_k$ is a totally geodesic submanifold of $M_j\times M_k$, since $T_{(o_j,o_k)}(G_{j,k,\sigma}\cdot (o_j,o_k))\cong \{X+\sigma X: X\in\g{p}_j\}$ is a Lie triple system in $\g{p}_j\oplus\g{p}_k$. Intrinsically, this singular orbit is homothetic to $M_j$ and to $M_k$. More specifically, since $\g{g}_j$ and $\g{g}_k$ are isomorphic via $\sigma$, we can assume that their Killing forms are the same; denote both by $\cal{B}$. Then the metrics at $o_j$ and $o_k$ of the irreducible symmetric spaces $M_j$ and $M_k$ can be canonically identified with $\lambda_j \cal{B}\rvert_{\g{p}_j\times \g{p}_j}$ and $\lambda_k \cal{B}\rvert_{\g{p}_k\times \g{p}_k}$, for some positive constants $\lambda_j$, $\lambda_k$. Thus, the metric on  $G_{j,k,\sigma}\cdot (o_j,o_k)$ at $(o_j,o_k)$ is given by $(\lambda_j+\lambda_k)\cal{B}(\pi_{\g{p}_j}(\cdot),\pi_{\g{p}_j}(\cdot))$, where $\pi_{\g{p}_j}\colon\g{p}_j\oplus\g{p}_k\to\g{p}_j$ is the projection onto the first factor. 
\end{remark}

We conclude this section by showing that, up to orbit equivalence in $I(M)$, the role of the automorphism $\sigma$ is irrelevant. More precisely:
\begin{proposition}\label{prop:two_isomorphisms}
	Let $\sigma$, $\tau\colon \g{g}_j\to\g{g}_k$ be two Lie algebra isomorphisms. Then the actions of $G_{j,k,\sigma}$ and of $G_{j,k,\tau}$ on $M_j\times M_k$ are orbit equivalent. Moreover, this orbit equivalence is achieved by means of an element of $G_j\times G_k$ if $\sigma\tau^{-1}$ is an inner automorphism of $\g{g}_k$. 
\end{proposition}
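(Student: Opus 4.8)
The plan is to reduce the statement to the description of automorphisms of a real semisimple Lie algebra. First I would compose $\sigma$ and $\tau$: since both are isomorphisms $\g{g}_j\to\g{g}_k$, the map $\varphi=\tau^{-1}\circ\sigma$ is an automorphism of $\g{g}_j$, and we have $\g{g}_{j,k,\sigma}=\{X+\sigma X:X\in\g{g}_j\}=\{\varphi^{-1}Y+\tau Y:Y\in\g{g}_j\}$ after the substitution $Y=\varphi^{-1}X$ (so $\sigma X=\tau\varphi X=\tau Y$). Thus $\g{g}_{j,k,\sigma}=(\varphi^{-1}\times\id)(\g{g}_{j,k,\tau})$, i.e. the two diagonal subalgebras are related by the automorphism $\varphi^{-1}\times\id$ of $\g{g}_j\oplus\g{g}_k$. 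The whole maximal subalgebras $\g{l}_\sigma=\bigoplus_{i\neq j,k}\g{g}_i\oplus\g{g}_{j,k,\sigma}$ and $\g{l}_\tau$ are then related by the automorphism $\Psi=\id\oplus(\varphi^{-1}\times\id)$ of $\g{g}$, acting as the identity on all factors other than $\g{g}_j$, and as $\varphi^{-1}$ on $\g{g}_j$.

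Next I would invoke the structure of $\mathrm{Aut}(\g{g}_j)$: by Cartan's theorem, $\mathrm{Aut}(\g{g}_j)=\mathrm{Inn}(\g{g}_j)\rtimes \Gamma$, where $\Gamma$ is (isomorphic to) the group of automorphisms of the Dynkin diagram of $\g{g}_j$ (realized, say, as automorphisms preserving a fixed Cartan decomposition and a fixed maximal abelian subspace of the noncompact part, up to $K_j$-conjugacy). Every automorphism of $\g{g}_j$ is induced by an isometry of $M_j$; more precisely, $\mathrm{Aut}(\g{g}_j)$ is (a finite extension of) the full isometry group $I(M_j)$ acting by isometries, since outer automorphisms correspond to isometries of $M_j$ not lying in $I^0(M_j)=G_j$ (these are the isometries coming from Dynkin diagram symmetries, as recalled in \S\ref{subsec:classes}). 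Therefore $\varphi^{-1}$ is realized by some isometry $f$ of $M_j$, and hence $\Psi$ is realized by the isometry $F=\id_{M_1}\times\dots\times f\times\dots\times\id_{M_s}$ of $M$ (with $f$ in the $j$-th slot). This isometry $F$ conjugates $G_{j,k,\tau}$ to $G_{j,k,\sigma}$ at the Lie algebra level, hence at the group level, up to connected components; since we are only after orbit equivalence, $F\cdot(G_{j,k,\tau}\text{-orbits})=G_{j,k,\sigma}\text{-orbits}$, giving the first assertion.

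For the refinement, if $\varphi=\tau^{-1}\sigma$ is inner — equivalently $\sigma\tau^{-1}=\tau\varphi\tau^{-1}$ is an inner automorphism of $\g{g}_k$, which is the form stated — then $\varphi^{-1}=\mathrm{Ad}(g_j)$ for some $g_j\in G_j$, so $F$ above is the isometry of $M$ given by the element $g=(e,\dots,g_j,\dots,e)\in G_j\times\dots = G$ (here I use $G=\prod G_i$ as in the statement of Theorem~\ref{th:reducible}, or pass to a finite cover if $G$ is only locally a product), and this lies in $G_j\times G_k\subset G$; conjugation by it sends $G_{j,k,\tau}$ to $G_{j,k,\sigma}$, establishing the moreover part. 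I expect the only delicate point to be the correct bookkeeping of connected components and the precise identification of $\mathrm{Aut}(\g{g}_j)$ with isometries of $M_j$ (including the fact that outer automorphisms genuinely come from isometries in the full $I(M_j)$ — this is exactly the Dynkin-diagram-symmetry phenomenon emphasized in Remark~\ref{rem:orbit_equivalence}), but since we only claim orbit equivalence and not strong orbit equivalence, no subtlety beyond what is already standard is needed; the computation $\g{g}_{j,k,\sigma}=(\varphi^{-1}\times\id)(\g{g}_{j,k,\tau})$ is the one genuinely load-bearing line and is immediate.
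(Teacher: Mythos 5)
Your proposal is correct and follows essentially the same route as the paper: both proofs reduce the statement to the fact that the two diagonal subalgebras differ by an automorphism of a single simple factor, and that every automorphism of $\g{g}_j$ (resp.\ $\g{g}_k$) is of the form $\Ad(f)$ for an isometry $f$ of the corresponding irreducible factor. The paper applies the discrepancy automorphism on the $\g{g}_k$ side and, rather than quoting $\mathrm{Aut}(\g{g}_k)\cong I(M_k)$, proves exactly the needed surjectivity: it first conjugates by an inner automorphism so that the automorphism preserves $\g{k}_k$, then observes that its restriction to $\g{p}_k$ is a curvature-preserving linear isometry and hence integrates to an isometry fixing $o_k$. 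Two small points in your write-up deserve correction. First, the substitution should read $Y=\varphi X$, not $Y=\varphi^{-1}X$ (your subsequent formulas are consistent with $Y=\varphi X$, so this is only a typo). Second, your parenthetical description of $\mathrm{Aut}(\g{g}_j)$ as $\mathrm{Inn}(\g{g}_j)\rtimes\Gamma$ with $\Gamma$ the Dynkin diagram automorphism group is false for real simple Lie algebras: e.g.\ $\g{sl}_2(\R)$ has a nontrivial outer automorphism (conjugation by $\mathrm{diag}(1,-1)$) although the $(\s{A}_1)$ diagram has no symmetries. This does not damage your argument, since the only fact you actually use is that the map $I(M_j)\to\mathrm{Aut}(\g{g}_j)$, $f\mapsto\Ad(f)$, is surjective for $M_j$ irreducible of noncompact type --- which is standard, and which the paper's curvature-tensor argument establishes directly without any appeal to the structure of the outer automorphism group.
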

\begin{proof}
	As above, we can assume that $\sigma(\g{k}_j)=\g{k}_k$ and that there exists $g\in G_k$ such that $\Ad(g)\tau(\g{k}_j)=\g{k}_k$, since any two maximal compactly embedded subalgebras of a real semisimple Lie algebra are conjugate by an inner automorphism~\cite[Chapter~VI, \S2]{Helgason}. 
	Let $\varphi=\sigma\tau^{-1}\Ad(g^{-1})\in\mathrm{Aut}(\g{g}_k)$. Then $\varphi(\g{k}_k)=\g{k}_k$, and hence $\varphi(\g{p}_k)=\g{p}_k$. Since $\varphi$ is a Lie algebra automorphism, it preserves the Lie bracket, and then also the curvature tensor of $M_k$ at $o_k$, and the Killing form of $\g{g}_k$. Therefore, $\varphi\rvert_{\g{p}_k}\colon\g{p}_k\cong T_{o_k}M_k\to\g{p}_k\cong T_{o_k}M_k$ is a linear isometry that preserves the curvature tensor at $o_k$. Hence, by a well-known result (see~\cite[Corollary~2.3.14]{Wolf}), $\varphi$~is the differential at $o_k$ of an isometry $\psi\in I(M_k)$ that fixes $o_k$. In other words, $\varphi=\Ad(\psi)$, and hence $\sigma=\Ad(\psi g)\tau$, where $\Ad$ is the adjoint representation of the Lie group $I(M_k)$. Then the automorphism $\Ad(\id,\psi g)$ of $\g{g}_j\oplus\g{g}_k$ satisfies $\Ad(\id,\psi g)\g{g}_{j,k,\tau}=\g{g}_{j,k,\sigma}$, and therefore the connected Lie groups $G_{j,k,\sigma}$ and $G_{j,k,\tau}$ are conjugate by the isometry $(\id,\psi g)\in I(M_j\times M_k)$. In particular, their actions on $M_j\times M_k$ are orbit equivalent. Finally, if $\sigma\tau^{-1}$ is inner, then $\varphi$ is also inner, and hence we can assume that $\psi\in G_k$, so $(\id, \psi g)\in G_j\times G_k$.
\end{proof}

\section{New structural result}\label{sec:structural}
The aim of this section is to prove Theorem~\ref{th:classification}, which describes all possible types of cohomogeneity one actions on symmetric spaces of noncompact type. We will first state three lemmas about canonical extensions. The first two lemmas are rather simple, and deal with canonical extensions on products (Lemma~\ref{lemma:product}) and compositions of canonical extensions (Lemma~\ref{lemma:canonical}). The third one (Lemma~\ref{lemma:nilpotent}) is more involved, and describes how canonical extensions of nilpotent constructions look like. Finally, we will prove Theorem~\ref{th:classification}.

If one considers a Riemannian product $M=M_1\times M_2$ of symmetric spaces of noncompact type (where $M_1$ and $M_2$ are not necessarily irreducible), any set of simple roots associated with $M$ is a disjoint union $\Lambda=\Lambda_1\sqcup\Lambda_2$, where each $\Lambda_i$ is a set of simple roots for $M_i$, and where the roots in $\Lambda_1$ are orthogonal to the roots in $\Lambda_2$. The boundary component associated with taking $\Lambda_i$ as a subset of $\Lambda$ turns to be exactly $M_i$, and the canonical extension is in a way well behaved with respect to the Riemannian product.

\begin{lemma}\label{lemma:product}
	Let $M_1=G_1/K_1$ and $M_2=G_2/K_2$ be symmetric spaces of noncompact type, and $M=M_1\times M_2$ their Riemannian product. Let $\Lambda=\Lambda_1\sqcup\Lambda_2$ be a set of simple roots for $\g{g}_1\oplus\g{g}_2$, where $\Lambda_i$ is a set of simple roots for $\g{g}_i$, $i=1,2$. Let $H_{\Lambda_1}$ be a connected Lie subgroup of $G_1$ acting with cohomogeneity one on $M_1$. Then the cohomogeneity one action of $H_{\Lambda_1}\times G_2$ on $M$ has the same orbits as the action of the group $H_{\Lambda_1}^\Lambda$ obtained by canonical extension of $H_{\Lambda_1}$ from the boundary component $B_{\Lambda_1}=M_1$ to $M$.
\end{lemma}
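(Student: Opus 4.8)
The plan is to unravel both sides of the claimed equality of orbit foliations directly from the definitions, using the product structure of all the algebraic data involved. First I would record the decomposition of the relevant objects with respect to $\Lambda = \Lambda_1 \sqcup \Lambda_2$: taking $\Phi = \Lambda_1$ as a subset of $\Lambda$, the roots of $\Lambda_2$ are orthogonal to those of $\Lambda_1$, so $\Sigma^+ \setminus \Sigma^+_{\Lambda_1} = \Sigma^+_2$ (the positive roots of $\g{g}_2$), which gives $\g{a}_{\Lambda_1} = \g{a}_2$, $\g{n}_{\Lambda_1} = \g{n}_2$, $\g{s}_{\Lambda_1} = \g{g}_1$, $S_{\Lambda_1} = G_1$, and $B_{\Lambda_1} = M_1$. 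Consequently $A_{\Lambda_1} N_{\Lambda_1} = A_2 N_2$, the solvable Iwasawa part of $G_2$, which acts simply transitively on $M_2$.

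Next I would write down the canonically extended group. By definition $H_{\Lambda_1}^\Lambda = H_{\Lambda_1} A_{\Lambda_1} N_{\Lambda_1} = H_{\Lambda_1} A_2 N_2$, a connected subgroup of $G = G_1 \times G_2$ with Lie algebra $\g{h}_{\Lambda_1} \oplus (\g{a}_2 \oplus \g{n}_2)$. I want to compare the orbit $H_{\Lambda_1}^\Lambda \cdot (p_1, p_2)$ with the orbit $(H_{\Lambda_1} \times G_2) \cdot (p_1, p_2) = (H_{\Lambda_1} \cdot p_1) \times M_2$. The key point is that $H_{\Lambda_1}^\Lambda$, as a subgroup of $G_1 \times G_2$, is exactly $H_{\Lambda_1} \times (A_2 N_2)$: its Lie algebra splits as a direct sum of an ideal-type piece in $\g{g}_1$ and the piece $\g{a}_2 \oplus \g{n}_2$ in $\g{g}_2$, and since $A_2 N_2$ is connected with the right Lie algebra, the connected subgroup integrating $\g{h}_{\Lambda_1} \oplus (\g{a}_2 \oplus \g{n}_2)$ is the direct product $H_{\Lambda_1} \times A_2 N_2$. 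Then the orbit through $(p_1, p_2)$ is $(H_{\Lambda_1} \cdot p_1) \times (A_2 N_2 \cdot p_2) = (H_{\Lambda_1} \cdot p_1) \times M_2$, because $A_2 N_2$ acts transitively on $M_2$. This is precisely the orbit of $H_{\Lambda_1} \times G_2$ through the same point, so the two orbit foliations coincide.

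I would then close by noting that cohomogeneity one of $H_{\Lambda_1} \times G_2$ on $M$ is immediate: its orbits $(H_{\Lambda_1} \cdot p_1) \times M_2$ have codimension in $M$ equal to the codimension of $H_{\Lambda_1} \cdot p_1$ in $M_1$, which is $0$ or $1$ by hypothesis, and equals $1$ for regular $p_1$; equivalently this follows from the fact that the canonical extension of a cohomogeneity one action is cohomogeneity one, as recalled in Section~\ref{sec:parabolic}. I do not anticipate a serious obstacle here: the only mild subtlety is justifying that the connected subgroup of $G_1 \times G_2$ with a Lie algebra that splits across the two factors is itself a direct product of the corresponding connected subgroups, which is standard once one observes $\g{a}_2 \oplus \g{n}_2$ is the Lie algebra of the already-connected subgroup $A_2 N_2$ and $\g{h}_{\Lambda_1} \subset \g{g}_1$ is the Lie algebra of the connected subgroup $H_{\Lambda_1}$. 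Everything else is a matter of matching the definitions of $A_\Phi N_\Phi$ and $B_\Phi$ to the product decomposition.
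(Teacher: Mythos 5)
Your proof is correct and follows essentially the same route as the paper: both arguments hinge on the orthogonality of $\Lambda_1$ and $\Lambda_2$, which identifies $\g{a}_{\Lambda_1}\oplus\g{n}_{\Lambda_1}$ with the solvable Iwasawa part of $\g{g}_2$. The only difference is the final step: the paper simply records the inclusion $\g{h}_{\Lambda_1}^\Lambda\subset\g{h}_{\Lambda_1}\oplus\g{g}_2$ and invokes the standard fact that two nested connected groups both acting with cohomogeneity one have the same orbits, whereas you compute both orbit foliations explicitly as $(H_{\Lambda_1}\cdot p_1)\times M_2$ via the transitivity of $A_2N_2$ on $M_2$, which is a slightly more self-contained way to close.
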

\begin{proof}
	First observe that the roots in $\Lambda_1$ are perpendicular to the roots in $\Lambda_2$. Hence, $\g{a}_{\Lambda_1}=\g{a}^{\Lambda_2}$ and $\g{n}_{\Lambda_1}=\g{n}^{\Lambda_2}$. Thus $\g{h}^{\Lambda}_{\Lambda_1}=\g{h}_{\Lambda_1}\oplus\g{a}_{\Lambda_1}\oplus\g{n}_{\Lambda_1}=\g{h}_{\Lambda_1}\oplus\g{a}^{\Lambda_2}\oplus\g{n}^{\Lambda_2}\subset \g{h}_{\Lambda_1}\oplus\g{g}_2$. Since both $H^{\Lambda}_{\Lambda_1}$ and $H_{\Lambda_1}\times G_2$ act with cohomogeneity one on $M$, we conclude that their actions have the same orbits.
\end{proof}

The next lemma basically states that an iterated canonical extension is a canonical extension itself.

\begin{lemma}\label{lemma:canonical}
Let $\Psi\subset \Phi\subset \Lambda$ be subsets of the set of simple roots $\Lambda$, and let $H_\Psi$ be a subgroup of $S_\Psi$ acting on the boundary component $B_\Psi$ with cohomogeneity one. Denote by $H_\Psi^\Phi$ the canonical extension of $H_\Psi$ from $B_\Psi$ to $B_\Phi$, by $(H_\Psi^\Phi)^\Lambda$ the canonical extension of $H_\Psi^\Phi$ from $B_\Phi$ to $M$, and by $H_\Psi^\Lambda$ the canonical extension of $H_\Psi$ from $B_\Psi$ to $M$. Then $(H_\Psi^\Phi)^\Lambda=H_\Psi^\Lambda$.
\end{lemma}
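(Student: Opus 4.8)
The statement $(H_\Psi^\Phi)^\Lambda = H_\Psi^\Lambda$ is an identity of subgroups of $G$, so it suffices to prove the corresponding identity of Lie algebras, namely $\g{h}_\Psi^\Lambda = (\g{h}_\Psi^\Phi)^\Lambda$, since all the groups involved are connected. The plan is to unwind both sides of this equation using the explicit description of the canonical extension given in Section~\ref{sec:parabolic}, and then to reduce everything to a combinatorial identity about root spaces that follows from the relation $\Psi\subset\Phi\subset\Lambda$.

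\emph{First step: expand the right-hand side.} By definition, $\g{h}_\Psi^\Phi = \g{h}_\Psi \oplus \g{a}_{\Psi,\Phi}\oplus \g{n}_{\Psi,\Phi}$, where the decomposition is taken relative to the semisimple Lie algebra $\g{s}_\Phi$ (of which $\Psi$ is a set of simple roots); here $\g{a}_{\Psi,\Phi} = \g{a}^\Phi\cap\g{a}_\Psi$ and $\g{n}_{\Psi,\Phi} = \bigoplus_{\lambda\in\Sigma^+_\Phi\setminus\Sigma^+_\Psi}\g{g}_\lambda$, as recorded in the excerpt. Applying the canonical extension from $B_\Phi$ to $M$ then gives
\[
(\g{h}_\Psi^\Phi)^\Lambda = \g{h}_\Psi \oplus \g{a}_{\Psi,\Phi}\oplus\g{n}_{\Psi,\Phi}\oplus\g{a}_\Phi\oplus\g{n}_\Phi.
\]
On the other hand, $\g{h}_\Psi^\Lambda = \g{h}_\Psi\oplus\g{a}_\Psi\oplus\g{n}_\Psi$. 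So the problem reduces to checking the two identities
\[
\g{a}_\Psi = \g{a}_{\Psi,\Phi}\oplus\g{a}_\Phi,\qquad \g{n}_\Psi = \g{n}_{\Psi,\Phi}\oplus\g{n}_\Phi.
\]

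\emph{Second step: verify the two reductions.} For the nilpotent part, $\g{n}_\Psi = \bigoplus_{\lambda\in\Sigma^+\setminus\Sigma^+_\Psi}\g{g}_\lambda$ and $\g{n}_\Phi = \bigoplus_{\lambda\in\Sigma^+\setminus\Sigma^+_\Phi}\g{g}_\lambda$; since $\Psi\subset\Phi$ we have $\Sigma^+_\Psi\subset\Sigma^+_\Phi\subset\Sigma^+$, so the index set $\Sigma^+\setminus\Sigma^+_\Psi$ splits as the disjoint union of $\Sigma^+\setminus\Sigma^+_\Phi$ and $\Sigma^+_\Phi\setminus\Sigma^+_\Psi$, which is exactly the claim (and the sum is direct and orthogonal as a sum of distinct root spaces). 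For the abelian part, one uses $\g{a}_\Psi = \bigcap_{\alpha\in\Psi}\ker\alpha$ and $\g{a} = \g{a}_\Phi\oplus\g{a}^\Phi$ orthogonally; intersecting $\g{a}_\Psi$ with this decomposition gives $\g{a}_\Psi = (\g{a}_\Psi\cap\g{a}_\Phi)\oplus(\g{a}_\Psi\cap\g{a}^\Phi)$. Since $\Psi\subset\Phi$, every $\alpha\in\Psi$ vanishes on $\g{a}_\Phi$, hence $\g{a}_\Phi\subset\g{a}_\Psi$ and the first summand is $\g{a}_\Phi$; the second summand is precisely $\g{a}_{\Psi,\Phi} = \g{a}^\Phi\cap\g{a}_\Psi$ by the formula in the excerpt. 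Combining the two identities with the first step yields $(\g{h}_\Psi^\Phi)^\Lambda = \g{h}_\Psi^\Lambda$, and therefore the equality of the corresponding connected subgroups.

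\emph{Expected main obstacle.} This is essentially a bookkeeping argument, so there is no deep obstacle; the only point requiring care is keeping the three levels of notation ($\g{g}$, $\g{s}_\Phi$, $\g{s}_\Psi$) and their respective restricted-root-space decompositions consistently aligned — in particular, making sure that the root spaces $(\g{s}_\Phi)_\lambda$ for $\lambda\in\Sigma_\Phi$ genuinely coincide with the ambient $\g{g}_\lambda$ (which is exactly the identification set up in the excerpt just before the Chevalley/Langlands discussion for $\g{q}_{\Psi,\Phi}$), so that $\g{n}_{\Psi,\Phi}$ computed inside $\g{s}_\Phi$ equals $\bigoplus_{\lambda\in\Sigma^+_\Phi\setminus\Sigma^+_\Psi}\g{g}_\lambda$ as a subalgebra of $\g{g}$. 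Once that identification is invoked, the two displayed reductions are immediate and the proof is complete.
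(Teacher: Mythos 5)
Your proposal is correct and follows essentially the same route as the paper's proof: both expand $(\g{h}_\Psi^\Phi)^\Lambda$ at the Lie algebra level and reduce the claim to the two identities $\g{a}_\Psi=\g{a}_{\Psi,\Phi}\oplus\g{a}_\Phi$ and $\g{n}_\Psi=\g{n}_{\Psi,\Phi}\oplus\g{n}_\Phi$, verified by splitting the index set $\Sigma^+\setminus\Sigma^+_\Psi$ and intersecting $\g{a}_\Psi$ with $\g{a}=\g{a}_\Phi\oplus\g{a}^\Phi$. Your remark about the identification $(\g{s}_\Phi)_\lambda=\g{g}_\lambda$ is exactly the point the paper also invokes.
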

\begin{proof}
This is a straightforward calculation at the Lie algebra level. First, we have 
\begin{align*}
(\g{h}_\Psi^\Phi)^\Lambda & =\g{h}_\Psi^\Phi\oplus \g{a}_\Phi\oplus\g{n}_\Phi = (\g{h}_\Psi\oplus\g{a}_{\Psi,\Phi}\oplus\g{n}_{\Psi,\Phi}) \oplus \g{a}_\Phi\oplus\g{n}_\Phi.
\end{align*}
But
\[
\g{a}_\Phi\oplus\g{a}_{\Psi,\Phi} = (\g{a}\ominus\g{a}^\Phi)\oplus(\g{a}^\Phi\cap \g{a}_\Psi)=\g{a}_\Psi
\]
and, since $(\g{s}_\Phi)_\lambda=\g{g}_\lambda$ for any $\lambda\in \Sigma_\Phi$,
\[
\g{n}_\Phi\oplus\g{n}_{\Psi,\Phi} = \biggl(\bigoplus_{\lambda\in\Sigma^+\setminus\Sigma^+_\Phi}\g{g}_\lambda\biggr)\oplus\biggl(\bigoplus_{\Sigma^+_\Phi\setminus\Sigma^+_\Psi}\g{g}_\lambda\biggr)=\g{n}_\Psi,
\]
which shows that $(\g{h}_\Psi^\Phi)^\Lambda= \g{h}_\Psi\oplus\g{a}_\Psi\oplus\g{n}_\Psi=\g{h}_\Psi^\Lambda$.
\end{proof}

The following result is more complicated than the previous ones, and roughly states that the canonical extension of a nilpotent construction in a boundary component of $M$ is orbit equivalent to a nilpotent construction on $M$.

\begin{lemma}\label{lemma:nilpotent}
Let $\alpha_j\in\Phi\subset\Lambda$. Let
\[
\g{q}_{\Phi\setminus\{\alpha_j\},\Phi}=\g{l}_{\Phi\setminus\{\alpha_j\},\Phi}\oplus\g{n}_{\Phi\setminus\{\alpha_j\},\Phi}
\]
be the Chevalley decomposition of the parabolic subalgebra of $\g{s}_\Phi$ associated with the subset $\Phi\setminus\{\alpha_j\}$ of $\Phi$. Let \[
H_\Phi=H_{\Phi\setminus\{\alpha_j\},\Phi,\g{v}}=N^0_{L_{\Phi\setminus\{\alpha_j\},\Phi}}(\g{n}_{\Phi\setminus\{\alpha_j\},\Phi,\g{v}})N_{\Phi\setminus\{\alpha_j\},\Phi,\g{v}}
\]
be a subgroup of $S_\Phi$ obtained by nilpotent construction acting on $B_\Phi$ with cohomogeneity one, where $\g{v}$ is a subspace of $\g{n}_{\Phi\setminus\{\alpha_j\},\Phi}^1$ and $\g{n}_{\Phi\setminus\{\alpha_j\},\Phi,\g{v}}=\g{n}_{\Phi\setminus\{\alpha_j\},\Phi}\ominus\g{v}$.

Then, $\g{v}\subset\g{n}_{\Lambda\setminus\{\alpha_j\}}^1$, and $H_{\Lambda\setminus\{\alpha_j\},\g{v}}=N^0_{L_{\Lambda\setminus\{\alpha_j\}}}(\g{n}_{\Lambda\setminus\{\alpha_j\},\g{v}})N_{\Lambda\setminus\{\alpha_j\},\g{v}}$ is a subgroup of $G$ obtained by nilpotent construction and acting with cohomogeneity one on $M$. Moreover, the action of $H_{\Lambda\setminus\{\alpha_j\},\g{v}}$ on $M$ has the same orbits as the $H^\Lambda_\Phi$-action obtained by canonical extension of the $H_\Phi$-action from $B_\Phi$ to $M$.
\end{lemma}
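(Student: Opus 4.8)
The plan is to prove the two assertions—that $\g{v}$ sits inside the degree-one piece of the global nilradical and that the global nilpotent construction produces the canonically extended action—by a direct computation at the Lie algebra level, reducing everything to the descriptions of $\g{q}_{\Psi,\Phi}$ and of the gradings recalled in Section~\ref{sec:parabolic}.

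First I would check the grading statement $\g{v}\subset\g{n}_{\Lambda\setminus\{\alpha_j\}}^1$. Recall that $\g{v}\subset\g{n}_{\Phi\setminus\{\alpha_j\},\Phi}^1$, the degree-one part (with respect to the element $H^j$ viewed inside $\g{a}^\Phi$) of $\g{n}_{\Phi\setminus\{\alpha_j\},\Phi}=\g{n}^\Phi\cap\g{n}_{\Phi\setminus\{\alpha_j\}}=\bigoplus_{\lambda\in\Sigma^+_\Phi\setminus\Sigma^+_{\Phi\setminus\{\alpha_j\}}}\g{g}_\lambda$. A root $\lambda\in\Sigma^+_\Phi$ lies in this set precisely when the coefficient of $\alpha_j$ in its simple-root expansion (relative to $\Phi$) is nonzero; the degree-one part corresponds to this coefficient being exactly $1$. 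Since the identification of $\Sigma_\Phi$ with a root system of $\g{s}_\Phi$ is just restriction of roots and does not alter the coefficients in terms of simple roots, such a $\lambda$ has $\alpha_j$-coefficient $1$ also as an element of $\Sigma^+\subset\g{a}^*$, hence $\g{g}_\lambda\subset\g{n}_{\Lambda\setminus\{\alpha_j\}}^1$; taking sums over the relevant roots gives $\g{v}\subset\g{n}_{\Lambda\setminus\{\alpha_j\}}^1$. One also needs $\dim\g{v}\ge 2$, which is inherited from the hypothesis that $H_\Phi$ is obtained by nilpotent construction on $B_\Phi$. That $H_{\Lambda\setminus\{\alpha_j\},\g{v}}$ is indeed a well-defined nilpotent construction on $M$ then requires verifying conditions (NC1) and (NC2) for $\g{v}\subset\g{n}^1_{\Lambda\setminus\{\alpha_j\}}$; condition (NC2) is literally the same statement that already holds inside $B_\Phi$ once one identifies $N^0_{K_{\Phi\setminus\{\alpha_j\},\Phi}}(\g{v})$ with (a subgroup of) $N^0_{K_{\Lambda\setminus\{\alpha_j\}}}(\g{v})$ acting on the same sphere in the same $\g{v}$, and (NC1) will follow from the orbit-equivalence computation below together with the fact that the resulting $H$-action is known to be cohomogeneity one.

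Next I would identify $\g{h}^\Lambda_\Phi$ with $\g{h}_{\Lambda\setminus\{\alpha_j\},\g{v}}$, or rather show the corresponding groups have the same orbits. Write $\Psi=\Phi\setminus\{\alpha_j\}$. On one side, $\g{h}_\Phi=\g{h}_{\Psi,\Phi,\g{v}}=N^0_{\g{l}_{\Psi,\Phi}}(\g{n}_{\Psi,\Phi}\ominus\g{v})\oplus(\g{n}_{\Psi,\Phi}\ominus\g{v})$, and canonical extension to $M$ adjoins $\g{a}_\Phi\oplus\g{n}_\Phi$, so
\[
\g{h}^\Lambda_\Phi=N^0_{\g{l}_{\Psi,\Phi}}(\g{n}_{\Psi,\Phi}\ominus\g{v})\oplus(\g{n}_{\Psi,\Phi}\ominus\g{v})\oplus\g{a}_\Phi\oplus\g{n}_\Phi.
\]
On the other side, $\g{h}_{\Lambda\setminus\{\alpha_j\},\g{v}}=N^0_{\g{l}_{\Lambda\setminus\{\alpha_j\}}}(\g{n}_{\Lambda\setminus\{\alpha_j\}}\ominus\g{v})\oplus(\g{n}_{\Lambda\setminus\{\alpha_j\}}\ominus\g{v})$. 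Using (as in Lemma~\ref{lemma:canonical}) that $\g{n}_{\Lambda\setminus\{\alpha_j\}}=\g{n}_\Phi\oplus\g{n}_{\Psi,\Phi}$, one gets $\g{n}_{\Lambda\setminus\{\alpha_j\}}\ominus\g{v}=\g{n}_\Phi\oplus(\g{n}_{\Psi,\Phi}\ominus\g{v})$; then the two Lie algebras differ only in their ``$\g{l}$-parts,'' namely one must compare $N^0_{\g{l}_{\Psi,\Phi}}(\g{n}_{\Psi,\Phi}\ominus\g{v})\oplus\g{a}_\Phi\oplus\g{n}_\Phi$ with $N^0_{\g{l}_{\Lambda\setminus\{\alpha_j\}}}(\g{n}_\Phi\oplus(\g{n}_{\Psi,\Phi}\ominus\g{v}))$. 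Since $[\g{l}_{\Lambda\setminus\{\alpha_j\}},\g{n}_\Phi]\subset\g{n}_\Phi$ already, normalizing $\g{n}_\Phi\oplus(\g{n}_{\Psi,\Phi}\ominus\g{v})$ inside $\g{l}_{\Lambda\setminus\{\alpha_j\}}$ amounts to normalizing $\g{n}_{\Psi,\Phi}\ominus\g{v}$ modulo $\g{n}_\Phi$; decomposing $\g{l}_{\Lambda\setminus\{\alpha_j\}}=\g{l}_\Phi\oplus\g{n}^\Phi\cap\g{n}_{\Lambda\setminus\{\alpha_j\}}=\g{l}_\Phi\oplus\g{n}_{\Psi,\Phi}$ and recalling $\g{l}_\Phi=\g{a}_\Phi\oplus\g{l}_{\Psi,\Phi}$ (one must be a little careful that $\g{s}_\Phi\cap\g{g}_0$ versus $\g{g}_0$ matching up; this is exactly the identification of root data in Section~\ref{sec:parabolic}), one checks that this normalizer equals $N^0_{\g{l}_{\Psi,\Phi}}(\g{n}_{\Psi,\Phi}\ominus\g{v})\oplus\g{a}_\Phi\oplus(\g{n}_{\Psi,\Phi}\cap\text{normalizer})$, and the extra piece is absorbed into $\g{n}_{\Psi,\Phi}\ominus\g{v}\subset\g{n}_{\Lambda\setminus\{\alpha_j\}}\ominus\g{v}$. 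Hence $\g{h}_{\Lambda\setminus\{\alpha_j\},\g{v}}\supset\g{h}^\Lambda_\Phi$, and since both groups act with cohomogeneity one on $M$, Kollross/Dynkin-type maximality is not even needed: two cohomogeneity one actions one of whose acting groups contains the other must have the same orbits. This gives the orbit equivalence, and in particular shows $\g{h}_{\Lambda\setminus\{\alpha_j\},\g{v}}$ is cohomogeneity one, which retroactively secures condition (NC1).

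The main obstacle, I expect, is the bookkeeping in the normalizer computation of the previous paragraph: precisely matching $N^0_{\g{l}_{\Lambda\setminus\{\alpha_j\}}}(\g{n}_{\Lambda\setminus\{\alpha_j\}}\ominus\g{v})$ with $N^0_{\g{l}_{\Psi,\Phi}}(\g{n}_{\Psi,\Phi}\ominus\g{v})\oplus\g{a}_\Phi$ up to pieces lying inside $\g{n}_{\Lambda\setminus\{\alpha_j\}}\ominus\g{v}$. This is where one genuinely uses that $\g{v}\subset\g{n}^1$ and the bracket relations $[\g{n}^k,\g{n}^l]\subset\g{n}^{k+l}$ forced by the $H^j$-grading, so that normalizing $\g{n}_{\Psi,\Phi}\ominus\g{v}$ modulo $\g{n}_\Phi$ in the full reductive part only ``sees'' the $\g{l}_{\Psi,\Phi}$-component plus the central $\g{a}_\Phi$ (which acts trivially by brackets within $\g{s}_\Phi$-root spaces). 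Once this is pinned down carefully, everything else is the kind of root-space arithmetic already illustrated in the proof of Lemma~\ref{lemma:canonical}.
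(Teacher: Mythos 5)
Your opening step (that $\g{v}\subset\g{n}^1_{\Lambda\setminus\{\alpha_j\}}$, via invariance of the $\alpha_j$-coefficient under the identification of $\Sigma_\Phi$ with a root system of $\g{s}_\Phi$) and your verification of (NC2) by the inclusion $N^0_{K_{\Phi\setminus\{\alpha_j\},\Phi}}(\g{v})\subset N^0_{K_{\Lambda\setminus\{\alpha_j\}}}(\g{v})$ both agree with the paper. The rest of the argument, however, has a genuine error and a genuine gap. The error: you assert $\g{n}_{\Lambda\setminus\{\alpha_j\}}=\g{n}_\Phi\oplus\g{n}_{\Psi,\Phi}$ (with $\Psi=\Phi\setminus\{\alpha_j\}$), and later $[\g{l}_{\Lambda\setminus\{\alpha_j\}},\g{n}_\Phi]\subset\g{n}_\Phi$ and $\g{l}_{\Lambda\setminus\{\alpha_j\}}=\g{l}_\Phi\oplus\g{n}_{\Psi,\Phi}$. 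All three are false in general: the identity proved in Lemma~\ref{lemma:canonical} is $\g{n}_{\Phi\setminus\{\alpha_j\}}=\g{n}_\Phi\oplus\g{n}_{\Psi,\Phi}$, and $\g{n}_{\Phi\setminus\{\alpha_j\}}\neq\g{n}_{\Lambda\setminus\{\alpha_j\}}$ unless $\Phi=\Lambda$. Concretely, a root $\lambda\in\Sigma^+\setminus\Sigma^+_\Phi$ supported entirely in $\Lambda\setminus\{\alpha_j\}$ gives $\g{g}_\lambda\subset\g{n}_\Phi$ but $\g{g}_\lambda\not\subset\g{n}_{\Lambda\setminus\{\alpha_j\}}$ (rather $\g{g}_\lambda\subset\g{m}_{\Lambda\setminus\{\alpha_j\}}$), and $[\g{g}_{-\lambda},\g{g}_\lambda]\subset\g{g}_0$ shows $\g{l}_{\Lambda\setminus\{\alpha_j\}}$ does not normalize $\g{n}_\Phi$. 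So your reduction of the normalizer comparison collapses, and with it the claimed containment $\g{h}^\Lambda_\Phi\subset\g{h}_{\Lambda\setminus\{\alpha_j\},\g{v}}$. The paper handles exactly this point by splitting $\g{n}_\Phi$ according to whether $\lambda\in\Sigma^+_{\Lambda\setminus\{\alpha_j\}}$ or not, and for the problematic roots it proves the bracket identity $[\theta(\g{n}_\Phi\cap\g{m}_{\Lambda\setminus\{\alpha_j\}}),\g{v}]=0$, which places $\g{n}_\Phi\cap\g{m}_{\Lambda\setminus\{\alpha_j\}}$ in the \emph{normalizer} summand of $\g{h}_{\Lambda\setminus\{\alpha_j\},\g{v}}$, not in the nilpotent one. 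That computation is the key ingredient and is absent from your proposal.

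The gap: you propose to obtain (NC1) ``retroactively'' from the cohomogeneity-one-ness of the resulting action, while simultaneously invoking that cohomogeneity-one-ness to conclude equality of orbits (``since both groups act with cohomogeneity one\dots''). As written this is circular, and even after repair (non-transitivity of $H_{\Lambda\setminus\{\alpha_j\},\g{v}}$ follows from a normal-space estimate, and the containment --- once correctly established --- then forces equal orbits) the statement that the group is ``obtained by nilpotent construction'' requires (NC1) itself, which does not follow formally from cohomogeneity one without a careful codimension count of the orbit through $o$. The paper instead proves (NC1) directly, and this is the bulk of its proof: it decomposes $\g{b}_{\Lambda\setminus\{\alpha_j\}}$ into $\g{a}_{\Phi\setminus\{\alpha_j\},\Lambda\setminus\{\alpha_j\}}\oplus\g{b}_{\Phi\setminus\{\alpha_j\}}\oplus T_o(N_{\Phi\setminus\{\alpha_j\},\Lambda\setminus\{\alpha_j\}}\cdot o)$ and shows each summand is tangent to the $N^0_{M_{\Lambda\setminus\{\alpha_j\}}}(\g{n}_{\Lambda\setminus\{\alpha_j\},\g{v}})$-orbit, using respectively that $\g{a}_{\Phi\setminus\{\alpha_j\}}$ normalizes $\g{v}$ (since every $\lambda$ supporting $\g{v}$ restricts to $\alpha_j$ on $\g{a}_{\Phi\setminus\{\alpha_j\}}$), the hypothesis (NC1) on $B_\Phi$, and again the bracket identity above. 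You would need to supply these arguments to close the proof.
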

\begin{proof}
First of all, we check that $\g{v}$ is indeed a subspace of $\g{n}^1_{\Lambda\setminus\{\alpha_j\}}$. This is easy, since
\[
\g{v}\subset\g{n}_{\Phi\setminus\{\alpha_j\},\Phi}^1=\bigoplus_{\begin{subarray}{c}\lambda\in\Sigma^+_\Phi\\\lambda(H^j)=1\end{subarray}}\g{g}_\lambda\subset \bigoplus_{\begin{subarray}{c}\lambda\in\Sigma^+\\\lambda(H^j)=1\end{subarray}}\g{g}_\lambda=\g{n}^1_{\Lambda\setminus\{\alpha_j\}}.
\]

We will now check that this choice of $\g{v}$ as a subspace of $\g{n}^1_{\Lambda\setminus\{\alpha_j\}}$ gives rise to a nilpotent construction, that is, $\g{v}$ satisfies the conditions (NC1) and (NC2) in~\S\ref{subsec:classes}. 

We have the inclusion
\[
\g{l}_{\Phi\setminus\{\alpha_j\},\Phi}=(\g{s}_\Phi)_0\oplus\biggl(\bigoplus_{\lambda\in\Sigma_{\Phi\setminus\{\alpha_j\}}}\g{g}_\lambda\biggr) 
\subset \g{g}_0\oplus\biggl(\bigoplus_{\lambda\in\Sigma_{\Lambda\setminus\{\alpha_j\}}}\g{g}_\lambda\biggr)=\g{l}_{\Lambda\setminus\{\alpha_j\}},
\]
and hence $\g{k}_{\Phi\setminus\{\alpha_j\},\Phi}=\g{l}_{\Phi\setminus\{\alpha_j\},\Phi}\cap\g{k}\subset\g{l}_{\Lambda\setminus\{\alpha_j\}}\cap\g{k}=\g{k}_{\Lambda\setminus\{\alpha_j\}}$.  Therefore, 
\begin{equation}\label{eq:inclusion_normalizers}
N_{\g{l}_{\Phi\setminus\{\alpha_j\},\Phi}}(\g{n}_{\Phi\setminus\{\alpha_j\},\Phi,\g{v}}) = \theta N_{\g{l}_{\Phi\setminus\{\alpha_j\},\Phi}}(\g{v}) \subset \theta N_{\g{l}_{\Lambda\setminus\{\alpha_j\}}}(\g{v})= N_{\g{l}_{\Lambda\setminus\{\alpha_j\}}}(\g{n}_{\Lambda\setminus\{\alpha_j\},\g{v}}),
\end{equation}
and $N_{\g{k}_{\Phi\setminus\{\alpha_j\},\Phi}}(\g{v})\subset N_{\g{k}_{\Lambda\setminus\{\alpha_j\}}}(\g{v})$.

By hypothesis, $\g{v}\subset\g{n}_{\Phi\setminus\{\alpha_j\},\Phi}^1$ satisfies the two conditions for the nilpotent construction on $B_\Phi$. In particular, $N^0_{K_{\Phi\setminus\{\alpha_j\},\Phi}}(\g{v})$ acts transitively on the unit sphere of $\g{v}$. Since $N^0_{K_{\Phi\setminus\{\alpha_j\},\Phi}}(\g{v})\subset N^0_{K_{\Lambda\setminus\{\alpha_j\}}}(\g{v})$, so does $N^0_{K_{\Lambda\setminus\{\alpha_j\}}}(\g{v})$. Thus $\g{v}$ satisfies condition~(NC2) for the nilpotent construction on $M$.

In order to check condition (NC1), first note that $N^0_{M_{\Lambda\setminus\{\alpha_j\}}}(\g{n}_{\Lambda\setminus\{\alpha_j\},\g{v}})$ leaves $B_{\Lambda\setminus\{\alpha_j\}}=M_{\Lambda\setminus\{\alpha_j\}}\cdot o$ invariant. Therefore, it will be enough to verify the inclusion
\begin{equation}\label{eq:inclusion} \g{b}_{\Lambda\setminus\{\alpha_j\}}\subset T_o \big(N^0_{M_{\Lambda\setminus\{\alpha_j\}}}(\g{n}_{\Lambda\setminus\{\alpha_j\},\g{v}})\cdot o\big)
\end{equation}
to see that $N^0_{M_{\Lambda\setminus\{\alpha_j\}}}(\g{n}_{\Lambda\setminus\{\alpha_j\},\g{v}})$ acts transitively on $B_{\Lambda\setminus\{\alpha_j\}}$. For this, decompose $\g{b}_{\Lambda\setminus\{\alpha_j\}}$ as
\[
\g{b}_{\Lambda\setminus\{\alpha_j\}}=
\g{a}^{\Lambda\setminus\{\alpha_j\}}\oplus\biggl(\bigoplus_{\lambda\in\Sigma^+_{\Lambda\setminus\{\alpha_j\}}}\g{p}_{\lambda}\biggr)=
\g{a}_{\Phi\setminus\{\alpha_j\},\Lambda\setminus\{\alpha_j\}}\oplus\g{b}_{\Phi\setminus\{\alpha_j\}} \oplus T_o (N_{\Phi\setminus\{\alpha_j\},\Lambda\setminus\{\alpha_j\}}\cdot o),
\]
with $\g{a}_{\Phi\setminus\{\alpha_j\},\Lambda\setminus\{\alpha_j\}}=\g{a}^{\Lambda\setminus\{\alpha_j\}}\cap\g{a}_{\Phi\setminus\{\alpha_j\}}=\g{a}^{\Lambda\setminus\{\alpha_j\}}\ominus\g{a}^{\Phi\setminus\{\alpha_j\}}$, $\g{b}_{\Phi\setminus\{\alpha_j\}}=\g{a}^{\Phi\setminus\{\alpha_j\}}\oplus\bigl(\bigoplus_{\lambda\in\Sigma^+_{\Phi\setminus\{\alpha_j\}}}\g{p}_\lambda\bigr)$ and $T_o (N_{\Phi\setminus\{\alpha_j\},\Lambda\setminus\{\alpha_j\}}\cdot o)\cong\bigoplus_{\lambda\in\Sigma^+_{\Lambda\setminus\{\alpha_j\}}\setminus\Sigma^+_{\Phi\setminus\{\alpha_j\}}}\g{p}_\lambda$. 
We will prove~\eqref{eq:inclusion} by showing that each one of the three addends in the right-hand term of the previous relation for $\g{b}_{\Lambda\setminus\{\alpha_j\}}$ is contained in $T_o \big(N^0_{M_{\Lambda\setminus\{\alpha_j\}}}(\g{n}_{\Lambda\setminus\{\alpha_j\},\g{v}})\cdot o\big)$.

By assumption, $N^0_{M_{\Phi\setminus\{\alpha_j\},\Phi}}(\g{n}_{\Phi\setminus\{\alpha_j\},\Phi,\g{v}})$ acts transitively on $B_{\Phi\setminus\{\alpha_j\}}$, so
\[
\g{b}_{\Phi\setminus\{\alpha_j\}}= T_o\bigl(N^0_{M_{\Phi\setminus\{\alpha_j\},\Phi}}(\g{n}_{\Phi\setminus\{\alpha_j\},\Phi,\g{v}})\cdot o\bigr)\subset T_o \big(N^0_{M_{\Lambda\setminus\{\alpha_j\}}}(\g{n}_{\Lambda\setminus\{\alpha_j\},\g{v}})\cdot o\big),
\]
where in the inclusion we have used \eqref{eq:inclusion_normalizers} and $\g{m}_{\Phi\setminus\{\alpha_j\},\Phi}\subset\g{m}_{\Lambda\setminus\{\alpha_j\}}$. 

Let $H\in\g{a}_{\Phi\setminus\{\alpha_j\}}$ and $X=\sum_{\lambda\in\Sigma^+_\Phi,\,\lambda(H^j)=1}X_\lambda\in\g{v}\subset\g{n}_{\Phi\setminus\{\alpha_j\},\Phi}^1$, with $X_\lambda\in\g{g}_\lambda$. Given $\lambda\in\Sigma^+_\Phi$ such that $\lambda(H^j)=1$, we can write $\lambda=\alpha_j+\sum_{\alpha\in\Phi\setminus\{\alpha_j\}}n_\alpha\alpha$, for some $n_\alpha\in \mathbb{Z}_{\geq 0}$. Then $\lambda(H)=\alpha_j(H)$, since $H\in\g{a}_{\Phi\setminus\{\alpha_j\}}$. Thus
\[
[H,X]=\sum_{\begin{subarray}{c}\lambda\in\Sigma^+_\Phi\\\lambda(H^j)=1\end{subarray}} \lambda(H)X_\lambda
=\sum_{\begin{subarray}{c}\lambda\in\Sigma^+_\Phi\\\lambda(H^j)=1\end{subarray}} \alpha_j(H)X_\lambda
=\alpha_j(H) X,
\]
which means that $\g{a}_{\Phi\setminus\{\alpha_j\}}$ normalizes $\g{v}$. Since $\g{a}_{\Phi\setminus\{\alpha_j\}}\subset\g{a}\subset \g{l}_{\Lambda\setminus\{\alpha_j\}}$, this implies
\begin{equation}\label{eq:inclusion_aPhi}
	\g{a}_{\Phi\setminus\{\alpha_j\}}=\theta \g{a}_{\Phi\setminus\{\alpha_j\}}\subset  \theta N_{\g{l}_{\Lambda\setminus\{\alpha_j\}}}(\g{v})
	= N_{\g{l}_{\Lambda\setminus\{\alpha_j\}}}(\g{n}_{\Lambda\setminus\{\alpha_j\},\g{v}}).
\end{equation}	
Intersecting with $\g{a}^{\Lambda\setminus\{\alpha_j\}}\subset \g{m}_{\Lambda\setminus\{\alpha_j\}}$, we get $\g{a}^{\Lambda\setminus\{\alpha_j\}}\cap\g{a}_{\Phi\setminus\{\alpha_j\}}\subset N_{\g{m}_{\Lambda\setminus\{\alpha_j\}}}(\g{n}_{\Lambda\setminus\{\alpha_j\},\g{v}})$. Therefore we deduce $\g{a}_{\Phi\setminus\{\alpha_j\},\Lambda\setminus\{\alpha_j\}}=\g{a}^{\Lambda\setminus\{\alpha_j\}}\cap\g{a}_{\Phi\setminus\{\alpha_j\}}\subset T_o \big(N^0_{M_{\Lambda\setminus\{\alpha_j\}}}(\g{n}_{\Lambda\setminus\{\alpha_j\},\g{v}})\cdot o\big)$. 

We now check that $T_o \big(N_{\Phi\setminus\{\alpha_j\},\Lambda\setminus\{\alpha_j\}}\cdot o\big)\subset T_o\big(N^0_{M_{\Lambda\setminus\{\alpha_j\}}}(\g{n}_{\Lambda\setminus\{\alpha_j\},\g{v}})\cdot o\big)$. We will first prove
\begin{equation}\label{eq:arg_Carlos}
	\g{n}_\Phi\cap\g{m}_{\Lambda\setminus\{\alpha_j\}}\subset N_{\g{m}_{\Lambda\setminus\{\alpha_j\}}}(\g{n}_{\Lambda\setminus\{\alpha_j\},\g{v}}).
\end{equation}
Observe that 
$\g{v}\subset\g{n}^1_{\Phi\setminus\{\alpha_j\},\Phi}\subset\g{m}_\Phi\cap\g{n}_{\Lambda\setminus\{\alpha_j\}}$. Then,
\begin{align*}
[\theta(\g{n}_\Phi\cap\g{m}_{\Lambda\setminus\{\alpha_j\}}),\g{v}]
&\subset [\theta\g{n}_\Phi,\g{v}]\cap [\g{m}_{\Lambda\setminus\{\alpha_j\}},\g{v}]
\subset [\theta\g{n}_\Phi,\g{m}_\Phi]\cap [\g{m}_{\Lambda\setminus\{\alpha_j\}},\g{n}_{\Lambda\setminus\{\alpha_j\}}]
\\
&\subset\theta[\g{n}_\Phi,\g{m}_\Phi]\cap \g{n}_{\Lambda\setminus\{\alpha_j\}}
=\theta\g{n}_\Phi \cap \g{n}_{\Lambda\setminus\{\alpha_j\}}\subset \theta\g{n}\cap\g{n}=0.
\end{align*}
Thus, $\theta(\g{n}_\Phi\cap\g{m}_{\Lambda\setminus\{\alpha_j\}})\subset N_{\g{m}_{\Lambda\setminus\{\alpha_j\}}}(\g{v})=\theta N_{\g{m}_{\Lambda\setminus\{\alpha_j\}}}(\g{n}_{\Lambda\setminus\{\alpha_j\},\g{v}})$, from where~\eqref{eq:arg_Carlos} follows. 
But then $\g{n}_{\Phi\setminus\{\alpha_j\},\Lambda\setminus\{\alpha_j\}}\subset\g{n}_\Phi\cap\g{m}_{\Lambda\setminus\{\alpha_j\}}\subset N_{\g{m}_{\Lambda\setminus\{\alpha_j\}}}(\g{n}_{\Lambda\setminus\{\alpha_j\},\g{v}})$, and hence $T_o \big(N_{\Phi\setminus\{\alpha_j\},\Lambda\setminus\{\alpha_j\}}\cdot o\big)\subset T_o\big(N^0_{M_{\Lambda\setminus\{\alpha_j\}}}(\g{n}_{\Lambda\setminus\{\alpha_j\},\g{v}})\cdot o\big)$. This concludes the proof of~\eqref{eq:inclusion}. Therefore,   $N^0_{M_{\Lambda\setminus\{\alpha_j\}}}(\g{n}_{\Lambda\setminus\{\alpha_j\},\g{v}})$ acts transitively on $B_{\Lambda\setminus\{\alpha_j\}}$, and thus, $\g{v}$ satisfies the condition (NC1) for the nilpotent construction on $M$. Since, as shown above, (NC2) also holds, we get that  $H_{\Lambda\setminus\{\alpha_j\},\g{v}}$ acts on $M$ with cohomogeneity one.

In order to conclude the proof of the lemma, we just have to see that the actions of $H_{\Lambda\setminus\{\alpha_j\},\g{v}}$ and $H^\Lambda_\Phi$ have the same orbits. For this, we will show that $\g{h}^\Lambda_\Phi\subset \g{h}_{\Lambda\setminus\{\alpha_j\},\g{v}}$. First observe that 
\[
\g{h}^\Lambda_\Phi=N_{\g{l}_{\Phi\setminus\{\alpha_j\},\Phi}}(\g{n}_{\Phi\setminus\{\alpha_j\},\Phi,\g{v}})\oplus\g{n}_{\Phi\setminus\{\alpha_j\},\Phi,\g{v}}\oplus\g{a}_{\Phi}\oplus\g{n}_{\Phi},
\]
and recall
\[
\g{h}_{\Lambda\setminus\{\alpha_j\},\g{v}}=N_{\g{l}_{\Lambda\setminus\{\alpha_j\}}}(\g{n}_{\Lambda\setminus\{\alpha_j\},\g{v}})\oplus\g{n}_{\Lambda\setminus\{\alpha_j\},\g{v}}.
\]
We have seen in~\eqref{eq:inclusion_normalizers} that $N_{\g{l}_{\Phi\setminus\{\alpha_j\},\Phi}}(\g{n}_{\Phi\setminus\{\alpha_j\},\Phi,\g{v}}) \subset N_{\g{l}_{\Lambda\setminus\{\alpha_j\}}}(\g{n}_{\Lambda\setminus\{\alpha_j\},\g{v}})$. 
Also, $\g{n}_{\Phi\setminus\{\alpha_j\},\Phi}\subset \g{n}_{\Lambda\setminus\{\alpha_j\}}$, 
and hence $\g{n}_{\Phi\setminus\{\alpha_j\},\Phi,\g{v}}\subset \g{n}_{\Lambda\setminus\{\alpha_j\},\g{v}}$. By~\eqref{eq:inclusion_aPhi} we have $\g{a}_\Phi\subset \g{a}_{\Phi\setminus\{\alpha_j\}}\subset N_{\g{l}_{\Lambda\setminus\{\alpha_j\}}}(\g{n}_{\Lambda\setminus\{\alpha_j\},\g{v}})$.
Finally, we show that $\g{n}_\Phi=\bigoplus_{\lambda\in\Sigma^+\setminus\Sigma^+_\Phi}\g{g}_\lambda$ is contained in $\g{h}_{\Lambda\setminus\{\alpha_j\},\g{v}}$. Let $\lambda\in\Sigma^+\setminus\Sigma^+_{\Phi}$. Assume first $\lambda\notin\Sigma^+_{\Lambda\setminus\{\alpha_j\}}$. Then $\g{g}_\lambda\subset \g{n}_{\Lambda\setminus\{\alpha_j\},\g{v}}$, since $\g{g}_\lambda\perp\g{n}_{\Phi\setminus\{\alpha_j\},\Phi}\supset\g{v}$ as $\lambda\notin\Sigma^+_\Phi$. Now suppose $\lambda\in\Sigma^+_{\Lambda\setminus\{\alpha_j\}}$. Then, by~\eqref{eq:arg_Carlos}, $\g{g}_\lambda\subset\g{n}_\Phi\cap \g{m}_{\Lambda\setminus\{\alpha_j\}}\subset N_{\g{m}_{\Lambda\setminus\{\alpha_j\}}}(\g{n}_{\Lambda\setminus\{\alpha_j\},\g{v}})\subset N_{\g{l}_{\Lambda\setminus\{\alpha_j\}}}(\g{n}_{\Lambda\setminus\{\alpha_j\},\g{v}})$. 

Altogether we have $\g{h}^\Lambda_\Phi\subset \g{h}_{\Lambda\setminus\{\alpha_j\},\g{v}}$, and by connectedness,  $H^{\Lambda}_{\Phi}\subset H_{\Lambda\setminus\{\alpha_j\},\g{v}}$. Since both groups act with cohomogeneity one on $M$, they must have the same orbits.
\end{proof}

We can now prove the first main result of this paper.

\begin{proof}[Proof of Theorem~\ref{th:classification}]
By construction, an action of any of the five types stated in Theorem~\ref{th:classification} is of cohomogeneity one. 

In order to prove the converse, let $H$ be a connected closed subgroup of $G$ acting on $M=G/K$ with cohomogeneity one.
If the action of $H$ has no singular orbits, then the results in \cite{BT:jdg} and \cite{BDT:jdg} (see~Remarks~\ref{rem:BDT} and~\ref{rem:scaling_reducible}) guarantee that the $H$-action is orbit equivalent to one of the actions of foliation type, namely (FH) or (FS).

Hence, we will assume from now on that the $H$-action on $M$ has a singular orbit. Let $\g{g}=\g{g}_1\oplus\dots\oplus \g{g}_s$ be the decomposition of the real semisimple Lie algebra $\g{g}$ into simple ideals, and $M=M_1\times \dots \times M_s=G_1/K_1\times \dots \times G_s/K_s$ the corresponding decomposition of $M$ into irreducible symmetric spaces of noncompact type. Let $\g{q}$ be a maximal proper Lie subalgebra of $\g{g}$ containing the Lie algebra $\g{h}$ of $H$, and let $Q$ be the connected subgroup of $G$ with Lie algebra $\g{q}$. According to the exposition in Section~\ref{sec:diagonal} we must have  $\g{q}=\bigoplus_{\begin{subarray}{c}i=1\\i\neq l\end{subarray}}^s \g{g}_i \oplus \g{q}_l$ for an index $l\in\{1,\dots, s\}$ and a maximal proper subalgebra $\g{q}_l$ of $\g{g}_l$, or $\g{q}=\bigoplus_{\begin{subarray}{c}i=1\\i\neq j,k\end{subarray}}^s \g{g}_i \oplus \g{g}_{j,k,\sigma}$, for two indices $j,k\in\{1,\dots, s\}$, $j\neq k$, and an isomorphism $\sigma\colon\g{g}_j\to \g{g}_k$, where  $\g{g}_{j,k,\sigma}=\{X+\sigma X:X\in\g{g}_j\}$ is a maximal proper reductive subalgebra of $\g{g}_j\oplus\g{g}_k$.  

In the second case, in view of Theorem~\ref{th:diag_cohom}, the cohomogeneity of the $Q$-action on $M$ agrees with the rank of $M_j$, and so it must be equal to one, since $H\subset Q$ acts on $M$ with cohomogeneity one by assumption. Thus, the actions of $H$ and $Q$ have the same orbits. Hence, by Lemma~\ref{lemma:product}, the $H$-action has the same orbits as a canonical extension of a cohomogeneity one diagonal action on the boundary component $B_\Phi=M_j\times M_k$ with $\Phi=\{\beta_j,\beta_k\}\subset\Lambda$,  $B_{\{\beta_j\}}=M_j$, $B_{\{\beta_k\}}=M_k$, $\g{g}_j=\g{s}_{\{\beta_j\}}$, and $\g{g}_k=\g{s}_{\{\beta_k\}}$. Thus, the $H$-action is orbit equivalent to an action of (CER) type.

We consider the first case from now on, i.e.\ $\g{h}\subset\g{q}=\bigoplus_{\begin{subarray}{c}i=1\\i\neq l\end{subarray}}^s \g{g}_i \oplus \g{q}_l$ for an index $l\in\{1,\dots, s\}$ and a maximal proper subalgebra $\g{q}_l$ of $\g{g}_l$. Then, by~\cite[Theorem~3.2]{BT:crelle} (which ultimately relies on the work of Mostow~\cite{Mostow}), $\g{q}_l$ is either a maximal proper reductive or a maximal proper parabolic subalgebra of $\g{g}_l$. If $\g{q}_l$ is a  reductive subalgebra of $\g{g}_l$, then $\g{q}$ is a reductive subalgebra of $\g{g}$, and the $H$-action and the $Q$-action have the same orbits by \cite[Theorem~3.2]{BT:crelle}. By the same result and the assumption that $H$ has a singular orbit, we have that such singular orbit is totally geodesic. Using Lemma~\ref{lemma:product}, we see that the actions of $H$ and $Q$ have the same orbits as an action obtained by the canonical extension of a cohomogeneity one $Q_l$-action with a totally geodesic singular orbit on the irreducible boundary component $B_\Phi=M_l$, where $\Phi$ is the subset of $\Lambda$ consisting of all simple roots of $\g{g}_l$. This corresponds to an action of type (CEI). 

Henceforth, we assume that $\g{q}_l$ is a maximal proper parabolic subalgebra of $\g{g}_l$, and hence, $\g{q}$ is a maximal proper parabolic subalgebra of $\g{g}$. Then, there is an element $g\in G_l\subset G$ such that $\Ad(g)\g{q}$ is a standard maximal proper parabolic subalgebra $\g{q}_{\Lambda\setminus\{\alpha_j\}}$, for some simple root $\alpha_j\in\Lambda$ corresponding to $\g{g}_l\subset \g{g}$. Therefore, we know from \cite[Theorem~5.8]{BT:crelle} (see Remarks~\ref{rmk:th:BT} and~\ref{rem:scaling_reducible}) that the $H$-action on $M$ is orbit equivalent (via $g\in G_l$) to a cohomogeneity one action on $M$ obtained by canonical extension of a cohomogeneity one action on the boundary component $B_{\Lambda\setminus\{\alpha_j\}}$, or to a cohomogeneity one action on $M$ of a group $H_{\Lambda\setminus\{\alpha_j\},\g{v}}$ obtained by nilpotent construction, for some subspace $\g{v}\subset \g{n}^1_{\Lambda\setminus\{\alpha_j\}}$. This second case corresponds to an action of type (NC) in the statement of Theorem~\ref{th:classification}.

Hence, we assume that the $H$-action is orbit equivalent to the canonical extension $H_{\Lambda\setminus\{\alpha_j\}}^\Lambda$ of certain connected closed subgroup $H_{\Lambda\setminus\{\alpha_j\}}\subset S_{\Lambda\setminus\{\alpha_j\}}$ acting on $B_{\Lambda\setminus\{\alpha_j\}}$ with cohomogeneity one. We can and will also assume that the $H_{\Lambda\setminus\{\alpha_j\}}$-action on $B_{\Lambda\setminus\{\alpha_j\}}$ has a singular orbit, since otherwise its canonical extension (and hence, the $H$-action) would yield a homogeneous regular foliation on $M$, contradicting the assumption that the $H$-action has a singular orbit. 

Let $j_1=j$. Now we apply all the procedure described so far (for actions with singular orbits) with $B_{\Lambda\setminus\{\alpha_{j_1}\}}$ instead of $M$ and with $H_{\Lambda\setminus\{\alpha_{j_1}\}}\subset S_{\Lambda\setminus\{\alpha_{j_1}\}}$ instead of $H\subset G$. In the case that the $H_{\Lambda\setminus\{\alpha_{j_1}\}}$-action on $B_{\Lambda\setminus\{\alpha_{j_1}\}}$ is orbit equivalent to a canonical extension of a group $H_{\Lambda\setminus\{\alpha_{j_1},\alpha_{j_2}\}}\subset S_{\Lambda\setminus\{\alpha_{j_1},\alpha_{j_2}\}}$ acting on $B_{\Lambda\setminus\{\alpha_{j_1},\alpha_{j_2}\}}$, we continue the procedure. This algorithm ends at some point, since $M$ has finite dimension and the dimensions of successive boundary components $B_{\Lambda\setminus\{\alpha_{j_1}\}}\supset B_{\Lambda\setminus\{\alpha_{j_1},\alpha_{j_2}\}} \supset\dots$ form a strictly decreasing sequence. Say that the sequence of boundary components we get is
\[
M=B_{\Phi_0}\supset B_{\Phi_1}\supset B_{\Phi_2} \supset\dots\supset B_{\Phi_m},
\]
where we put $\Phi_0=\Lambda$ and $\Phi_i=\Lambda\setminus\{\alpha_{j_1},\dots,\alpha_{j_i}\}$, for $i=1,\dots, m$, where $m$ must be strictly lower than the rank of $M$ (otherwise $B_{\Phi_m}$ is just one point, and there are no cohomogeneity one actions on it). Thus, our recurrence assumption is that we have a finite sequence of groups
\[
H=H_{\Phi_0}\subset G=S_{\Phi_0}, \quad H_{\Phi_1}\subset S_{\Phi_1}, \quad H_{\Phi_2} \subset S_{\Phi_2}, \quad \dots,\quad  H_{\Phi_m}\subset S_{\Phi_m}
\]
such that each $H_{\Phi_i}$-action on $B_{\Phi_i}$ is orbit equivalent via an element $g_i\in S_{\Phi_i}$ (see Remarks~\ref{rmk:th:BT} and~\ref{rem:scaling_reducible}) to the canonical extension of the $H_{\Phi_{i+1}}$-action on $B_{\Phi_{i+1}}$ to $B_{\Phi_{i}}$, for each $i=0,1,\dots, m-1$,
and the $H_{\Phi_m}$-action on $B_{\Phi_m}$ is no longer orbit equivalent to a canonical extension from any smaller boundary component of $B_{\Phi_m}$.
Since $g_i H_{\Phi_i} g_i^{-1}$ and $H_{\Phi_{i+1}}^{\Phi_i}$ act on $B_{\Phi_i}$ with the same orbits, 
their canonically extended actions of $(g_i H_{\Phi_i} g_i^{-1})^\Lambda$ and $(H_{\Phi_{i+1}}^{\Phi_i})^\Lambda$ on $M$ have exactly the same orbits, by construction. Moreover, the actions of $(g_i H_{\Phi_i} g_i^{-1})^\Lambda$ and $H_{\Phi_i}^\Lambda$ on $M$ are orbit equivalent, because the actions of $H_{\Phi_i}$ and $g_i H_{\Phi_i} g_i^{-1}$ on $B_{\Phi_i}$ are trivially orbit equivalent by the inner isometry $g_i\in S_{\Phi_i}$ of $B_{\Phi_i}$ (see~\cite[Proposition~4.2]{BT:crelle} or the description of the canonical extension in~\S\ref{subsec:classes}). Also, by Lemma~\ref{lemma:canonical}, $(H_{\Phi_{i+1}}^{\Phi_i})^\Lambda=H_{\Phi_{i+1}}^\Lambda$. 
Altogether, we obtain that the actions of $H_{\Phi_i}^\Lambda$ and $H_{\Phi_{i+1}}^\Lambda$ on $M$ are orbit equivalent, for each $i=1,\dots,m-1$. Therefore, the actions of $H_{\Phi_1}^\Lambda$ and $H_{\Phi_m}^\Lambda$ on $M$ are orbit equivalent. Since $g_0 H_{\Phi_0}g_0^{-1}$ and $H^\Lambda_{\Phi_1}$ act on $M$ with the same orbits, we conclude that the action of $H=H_{\Phi_0}$ on $M$ is orbit equivalent to the action of $H_{\Phi_m}^\Lambda$ on $M$.

Now we apply the procedure described at the beginning of the proof (for actions with singular orbits) to the action of $H_{\Phi_m}$ on $B_{\Phi_m}$ instead of the action of $H$ on $M$. Let $\g{s}_{\Phi_m}=\bigoplus_{i=1}^{s_m} (\g{s}_{\Phi_m})_i$ be the decomposition of $\g{s}_{\Phi_m}$ into simple ideals. Since by construction the action of $H_{\Phi_m}$ on $B_{\Phi_m}$ is not a canonical extension, we have one of the following possibilities:
\begin{enumerate}[{\rm (i)}]
\item  The $H_{\Phi_m}$-action on $B_{\Phi_m}$ has the same orbits as the action of a maximal proper reductive subgroup of $S_{\Phi_m}$ with Lie algebra $\bigoplus_{\begin{subarray}{c}i=1\\i\neq j,k\end{subarray}}^{s_m} (\g{s}_{\Phi_m})_i \oplus (\g{s}_{\Phi_m})_{j,k,\sigma_m}$, where $\sigma_m$ is an isomorphism between $(\g{s}_{\Phi_m})_j$ and $(\g{s}_{\Phi_m})_k$, $j,k\in\{1,\dots,s_m\}$, $j\neq k$.
\item  The $H_{\Phi_m}$-action on $B_{\Phi_m}$ has the same orbits as the action of a maximal proper reductive subgroup of $S_{\Phi_m}$ with Lie algebra $\bigoplus_{\begin{subarray}{c}i=1\\i\neq l\end{subarray}}^{s_m} (\g{s}_{\Phi_m})_i \oplus \g{q}_l$, where $\g{q}_l$ is a maximal proper reductive subalgebra of $(\g{s}_{\Phi_m})_l$.
\item The $H_{\Phi_m}$-action on $B_{\Phi_m}$ has the same orbits as the action of $g_m H_{\Phi_m\setminus\{\alpha_k\},\Phi_m,\g{v}} g_m^{-1}$, where $g_m\in S_{\Phi_m}$, $\alpha_k\in \Phi_m$, $\g{v}$ is a subspace of $\g{n}^1_{\Phi_m\setminus\{\alpha_k\},\Phi_m}$, and $ H_{\Phi_m\setminus\{\alpha_k\},\Phi_m,\g{v}}\subset S_{\Phi_m}$ is obtained by nilpotent construction.
\end{enumerate}

In case (i) we must have $s_m=2$, $\Phi_m$ has two elements and $B_{\Phi_m}$ is the product of two symmetric spaces of rank one, because otherwise (by Lemma~\ref{lemma:product}) the $H_{\Phi_m}$-action on $B_{\Phi_m}$ would be orbit equivalent to a canonical extension, which contradicts the definition of $\Phi_m$. This situation corresponds to an action of type (CER) in the statement of Theorem~\ref{th:classification}, where the reducible boundary component of rank two is precisely $B_{\Phi_m}$.

Similarly, in case (ii) we have that $\g{s}_{\Phi_m}$ is a simple Lie algebra for the same reason (and thus $s_m=l=1$), and this corresponds to an action of type (CEI), where the irreducible boundary component is $B_{\Phi_m}$.

Finally, in case (iii), since $g_m\in S_{\Phi_m}$ is an inner isometry of $B_{\Phi_m}$, we have that the canonical extensions of the actions of $H_{\Phi_m}$ and $ H_{\Phi_m\setminus\{\alpha_k\},\Phi_m,\g{v}}$ on $B_{\Phi_m}$ to $M$ are orbit equivalent. As shown above, the $H$-action and the $H_{\Phi_m}^\Lambda$-action on $M$ are orbit equivalent, so we get that the $H$-action is orbit equivalent to the $H_{\Phi_m\setminus\{\alpha_k\},\Phi_m,\g{v}}^\Lambda$-action. But then, Lemma~\ref{lemma:nilpotent} guarantees that the $H_{\Phi_m\setminus\{\alpha_k\},\Phi_m,\g{v}}^\Lambda$-action has the same orbits as the action of the group $H_{\Lambda\setminus\{\alpha_k\},\g{v}}$ obtained by nilpotent construction  from the choice of $\g{v}$ as a subset of $\g{n}_{\Lambda\setminus\{\alpha_k\}}^1$. This corresponds to case (NC) in the statement of Theorem~\ref{th:classification}.
\end{proof}

\section{Applications}\label{sec:applications}
The goal of this section is to prove Theorems~\ref{th:sl_n},~\ref{th:reducible} and~\ref{th:rank1} as applications of Theorem~\ref{th:classification}. This will give us explicit descriptions of the cohomogeneity one actions on the symmetric spaces $\s{SL}_{n+1}(\R)/\s{SO}_{n+1}$, $n\geq 1$ (\S\ref{subsec:SLn}), on the products of rank one spaces (\S\ref{subsec:rank1}), and the structure result for cohomogeneity one actions on reducible spaces (\S\ref{subsec:reducible}).

\subsection{Cohomogeneity one actions on $\s{SL}_{n+1}(\R)/\s{SO}_{n+1}$}\label{subsec:SLn}\hfill

For each integer $n\geq 1$, the symmetric space $\s{SL}_{n+1}(\R)/\s{SO}_{n+1}$ has rank $n$ and its root system is of type $(\s{A}_n)$, which in particular means that $\Sigma^+=\{\sum_{i=j}^k\alpha_i:1\leq j\leq k\leq n\}$ for some set of simple roots $\Lambda=\{\alpha_1,\dots,\alpha_{n}\}$. Moreover, $\g{k}_0=0$, $\g{g}_0=\g{a}$, and for each $\lambda\in \Sigma$, the restricted root space $\g{g}_\lambda$ has dimension one. See~\cite[Example~13.2.1]{BCO:book} for a detailed description.

Note that the case $n=1$ leads to the real hyperbolic plane $\R \s{H}^2$, in which case the classification is classical, whereas the case $n=2$ has been studied in~\cite{BT:crelle}.

\begin{proof}[Proof of Theorem~\ref{th:sl_n}]
	Let us analyze the different cases arising in Theorem~\ref{th:classification}. First, the foliation types (FH) and (FS) in Theorem~\ref{th:classification} correspond directly to cases (FH) and (FS) of Theorem~\ref{th:sl_n}.
	
	Let us focus now on case (CEI) of Theorem~\ref{th:classification}, that is, the cohomogeneity one actions on $M$ that arise as canonical extensions from irreducible boundary components. 
	
	Any connected subset $\Phi$ of simple roots in the Dynkin diagram of $\Lambda$ is of the form $\Phi=\{\alpha_j,\dots,\alpha_k\}$, for some $j,k\in\{1,\dots,n\}$, $j\leq k$. In this case, the boundary component $B_\Phi$ is isometric to the irreducible symmetric space $\s{SL}_{k-j+2}(\R)/\s{SO}_{k-j+2}$. By the description of actions of type (CEI), we have to consider the possible cohomogeneity one actions on $B_\Phi$ with a totally geodesic singular orbit. Such actions are induced by maximal proper reductive subgroups of $\s{SL}_{k-j+2}(\R)$. 
	
	If $j=k$, then $B_\Phi\cong\R \s{H}^2$ admits only one cohomogeneity one action with a totally geodesic orbit, up to orbit equivalence. Such action is the one of the isotropy group at some point of $B_\Phi\cong\R \s{H}^2$, which is given by the action of $K_\Phi^0\cong \s{SO}_2$ on $B_\Phi$ up to orbit equivalence, and has a fixed point as singular orbit. The canonical extension of this action to $M$ leads to the the first row of the table of case (CE) in Theorem~\ref{th:sl_n}.
	
	If $j<k$, we have to consider the cohomogeneity one actions on $B_\Phi\cong \s{SL}_{k-j+2}(\R)/\s{SO}_{k-j+2}$ that have a totally geodesic singular orbit. These were classified in~\cite{BT:tohoku}. According to the classification, these actions are orbit equivalent to the action of a maximal proper reductive subgroup of $\s{SL}_{k-j+2}(\R)$ isomorphic to $\s{SL}_{k-j+1}(\R)\times \R$, or, exceptionally in the case that $B_\Phi$ has rank $3$, i.e., $k=j+2$, to the action of a maximal proper reductive subgroup of $\s{SL}_4(\R)$ isomorphic to $\s{Sp}_2(\R)$. The corresponding totally geodesic singular orbits are isometric to $(\s{SL}_{k-j+1}(\R)/\s{SO}_{k-j+1})\times\R$ or to $\s{Sp}_2(\R)/\s{U}_2\cong \s{SO}^0_{2,3}/\s{SO}_2 \s{SO}_3$, respectively. The canonical extensions of such actions from $B_\Phi$ to $M$ yield the cohomogeneity one actions described in the second and third rows of the table in Theorem~\ref{th:sl_n}.
	
	Now, it is straightforward that the actions of type (CER) in Theorem~\ref{th:classification} give rise to the actions of type (CE) described in the fourth row of the table in Theorem~\ref{th:sl_n}. This is so since any rank two reducible boundary component of $M=\s{SL}_{n+1}(\R)/\s{SO}_{n+1}$ is of the form $B_\Phi\cong\R\s{H}^2\times\R\s{H}^2$, where $\Phi=\{\alpha_j,\alpha_k\}$, $|k-j|\geq 2$, is any disconnected subset of two simple roots in the Dynkin diagram of $\Lambda$.
	
	Finally, we have to determine the actions of type (NC) in Theorem~\ref{th:classification}, that is, those obtained via nilpotent construction. For this, fix $j\in\{1,\dots, n\}$. In our context, we have
\begingroup
\allowdisplaybreaks
\begin{align*}
	\g{n}_{\Lambda\setminus\{\alpha_j\}}&=\g{n}_{\Lambda\setminus\{\alpha_j\}}^1=\bigoplus_{i=1}^j\bigoplus_{l=j}^n\g{g}_{\alpha_i+\dots+\alpha_l}\cong\R^j\otimes(\R^{n-j+1})^*,
	\\
	\g{m}_{\Lambda\setminus\{\alpha_j\}}&=\g{a}^{\Lambda\setminus\alpha_j}\oplus\biggl(\bigoplus_{\alpha\in\Sigma_{\{\alpha_1,\dots,\alpha_{j-1}\}}}\g{g}_\alpha\biggr) \oplus\biggl(\bigoplus_{\alpha\in\Sigma_{\{\alpha_{j+1},\dots,\alpha_n\}}}\g{g}_\alpha\biggr)\cong\g{sl}_j(\R)\oplus\g{sl}_{n-j+1}(\R).
\end{align*}
\endgroup
Moreover, the adjoint Lie algebra representation of $\g{m}_{\Lambda\setminus\{\alpha_j\}}$ (resp.\ $\g{k}_{\Lambda\setminus\{\alpha_j\}}$) on $\g{n}_{\Lambda\setminus\{\alpha_j\}}$ is equivalent to the exterior tensor product representation of $\g{sl}_j(\R)\oplus\g{sl}_{n-j+1}(\R)$ (resp.\ $\g{so}_j\oplus\g{so}_{n-j+1}$) on $\R^j\otimes(\R^{n-j+1})^*$. Let us choose orthonormal bases $\{e_1,\dots,e_j\}$ of $\R^j$ and $\{f^1,\dots,f^{n-j+1}\}$ of $(\R^{n-j+1})^*$ in such a way that $e_i\otimes f^l$ can be regarded as a generator of  $\g{g}_{\alpha_{j-i+1}+\dots+\alpha_{j+l-1}}$, and thus, $\{e_i\otimes f^l:1\leq i\leq j, \, 1\leq l\leq n-j+1\}$ is an orthonormal basis of $\g{n}_{\Lambda\setminus\{\alpha_j\}}\cong\R^j\otimes(\R^{n-j+1})^*$. 

Let us assume, without loss of generality, that $j\leq n-j+1$; the case $j>n-j+1$ is completely analogous due to the symmetry of the Dynkin diagram of $\Lambda$. Since the action of $K_{\Lambda\setminus\{\alpha_j\}}^0$ on $\g{n}_{\Lambda\setminus\{\alpha_j\}}$ is equivalent to the isotropy representation of the symmetric space $\s{SO}^0_{j,n-j+1}/\s{SO}_j\s{SO}_{n-j+1}$, such action is polar with $\Xi=\mathrm{span}\{e_i\otimes f^i:i=1,\dots, j\}$ as a section, that is, $\Xi$ intersects all $K_{\Lambda\setminus\{\alpha_j\}}^0$-orbits and always perpendicularly. Thus, up to conjugation by an element of $K_{\Lambda\setminus\{\alpha_j\}}^0$, we can assume that any nonzero subspace $\g{v}$ of $\g{n}_{\Lambda\setminus\{\alpha_j\}}^1=\g{n}_{\Lambda\setminus\{\alpha_j\}}$ contains a unit vector $v=\sum_{i=1}^j v_i e_i\otimes f^i$, $v_i\in\R$. By the condition (NC2) of the nilpotent construction method, we want $\g{v}$ to be such that $N^0_{K_{\Lambda\setminus\{\alpha_j\}}}(\g{v})$ acts transitively on the unit sphere of $\g{v}$. Thus, $\g{v}$ must admit the orthogonal decomposition $\g{v}=\R v\oplus[N_{\g{k}_{\Lambda\setminus\{\alpha_j\}}}(\g{v}), v]$, where the second addend is perpendicular to $\Xi$ by polarity.

Now we take an element in $\g{m}_{\Lambda\setminus\{\alpha_j\}}$, which we identify with some $A+B\in\g{sl}_j(\R)\oplus\g{sl}_{n-j+1}(\R)$, where $A=(a_{il})_{i,l=1}^j$ and $B=(b_{il})_{i,l=1}^{n-j+1}$. For the sake of convenience, let us define $v_i=0$ for $i>j$. Then 
\[
[A+B, v]=\sum_{i=1}^j\sum_{l=1}^{n-j+1}(a_{il}v_l - b_{il}v_i) e_i\otimes f^l=\sum_{i=1}^j(a_{ii}-b_{ii})v_i e_i\otimes f^i + \sum_{i\neq l} (a_{il}v_l - b_{il}v_i) e_i\otimes f^l.
\]
Note that the first sum after the second equal sign belongs to $\Xi$, whereas the second sum is perpendicular to $\Xi$. Thus, if $A+B\in N_{\g{m}_{\Lambda\setminus\{\alpha_j\}}}(\g{v})$,  the first sum must be proportional to $v$, which implies that there exists $\lambda\in\R$ such that $a_{ii}-b_{ii}=\lambda$ for all $i\in\{1,\dots,j\}$ with $v_i\neq 0$. Hence, if there are at least two indices $i_1,i_2\in\{1,\dots,j\}$ such that $v_{i_1}\neq 0\neq v_{i_2}$, then not every $A+B\in\g{sl}_j(\R)\oplus\g{sl}_{n-j+1}(\R)$, with $A$ and $B$ diagonal, normalizes $\g{v}$. Under the identification $\g{m}_{\Lambda\setminus\{\alpha_j\}}\cong \g{sl}_j(\R)\oplus\g{sl}_{n-j+1}(\R)$, this means that the orthogonal projection of $N_{\g{m}_{\Lambda\setminus\{\alpha_j\}}}(\g{n}_{\Lambda\setminus\{\alpha_j\},\g{v}}) = \theta N_{\g{m}_{\Lambda\setminus\{\alpha_j\}}}(\g{v})$ onto $\g{p}$ does not contain the whole subspace $\g{a}^{\{\alpha_1,\dots,\alpha_{j-1}\}}\oplus\g{a}^{\{\alpha_{j+1},\dots,\alpha_n\}}=\g{a}^{\Lambda\setminus\{\alpha_j\}}$. In this case, the group $N^0_{M_{\Lambda\setminus\{\alpha_j\}}}(\g{n}_{\Lambda\setminus\{\alpha_j\},\g{v}})$ cannot act transitively on the boundary component $B_{\Lambda\setminus\{\alpha_j\}}\cong (\s{SL}_j(\R)/\s{SO}_j)\times (\s{SL}_{n-j-1}(\R)/\s{SO}_{n-j-1})$ since $\g{a}^{\Lambda\setminus\{\alpha_j\}}\subset T_o B_{\Lambda\setminus\{\alpha_j\}}$. This means that condition (NC1) does not hold in this case.

Therefore, we must have $v_i=0$ for all except one $i\in\{1,\dots,j\}$. Again, by conjugating by an element of $K_{\Lambda\setminus\{\alpha_j\}}^0$ if necessary, we can assume that $v=e_1\otimes f^1\in\g{g}_{\alpha_j}$. Now let $S+T\in N_{\g{k}_{\Lambda\setminus\{\alpha_j\}}}(\g{v})$, where $S\in\g{k}_{\{\alpha_1,\dots,\alpha_{j-1}\}}\cong\g{so}_j$ and $T\in\g{k}_{\{\alpha_{j+1},\dots,\alpha_{n}\}}\cong\g{so}_{n-j+1}$. Then the element
\[
[S+T,v]=(Se_1)\otimes f^1 +  e_1 \otimes (Tf^1)
\]
belongs to $\g{v}\ominus\R v$. Assume $Se_1\neq 0\neq Tf^1$. Then there exists
$P\in\s{SO}_j$ mapping the orthogonal set $(e_1,Se_1)$ to the orthogonal set $(e_2,\mu_1 e_1)$, for some $\mu_1\neq 0$; and similarly, there exists $Q\in\s{SO}_{n-j+1}$ sending $(f^1, Tf^1)$ to $(f^1,\mu_2f^2)$, for some $\mu_2\neq 0$. Thus, the element $g=(P,Q)\in K_{\Lambda\setminus\{\alpha_j\}}^0\cong \s{SO}_j\times \s{SO}_{n-j+1}$ satisfies
\begin{equation}\label{eq:AdgSTv}
\Ad(g)[S+T,v]=\mu_1 e_1\otimes f^1+\mu_2 e_2\otimes f^2,\quad \text{with} \;\mu_1\neq 0\neq \mu_2.
\end{equation}
Thus, the subspace $\Ad(g)\g{v}$ of $\g{n}_{\Lambda\setminus\{\alpha_j\}}$ intersects $\Xi$ nontrivially. Also, it satisfies conditions (NC1)-(NC2) because $\g{v}$ does so by assumption. But we have shown in the previous paragraph that no subspace of $\g{n}_{\Lambda\setminus\{\alpha_j\}}$ satisfying (NC1)-(NC2) and intersecting $\Xi$ contains an element such as the one on the right hand side of~\eqref{eq:AdgSTv}. This yields a contradiction, which implies that, for each $S+T\in N_{\g{k}_{\Lambda\setminus\{\alpha_j\}}}(\g{v})$, either $Se_1=0$ or $Tf^1=0$. Since $N_{\g{k}_{\Lambda\setminus\{\alpha_j\}}}(\g{v})$ is a vector space, we actually have either $Se_1=0$ or $Tf^1=0$, for all $S+T\in N_{\g{k}_{\Lambda\setminus\{\alpha_j\}}}(\g{v})$. In other words, $[N_{\g{k}_{\Lambda\setminus\{\alpha_j\}}}(\g{v}),v]\subset \mathrm{span}\{e_1\otimes f^i:i=1,\dots,n-j+1\}$ or $[N_{\g{k}_{\Lambda\setminus\{\alpha_j\}}}(\g{v}),v]\subset \mathrm{span}\{e_i\otimes f^1:i=1,\dots,j\}$. Assume that we are in the second case, that is, $\g{v}=\R v\oplus[N_{\g{k}_{\Lambda\setminus\{\alpha_j\}}}(\g{v}),v]\subset \mathrm{span}\{e_i\otimes f^1:i=1,\dots,j\}$; the first case is completely analogous. Let $k=\dim \g{v}$. Again, up to conjugation by an element of $K^0_{\{\alpha_1,\dots,\alpha_{j-1}\}}\cong \s{SO}_{j}$ we can assume 
\begin{equation}\label{eq:v_sln}
\g{v}=\g{g}_{\alpha_{j-k+1}+\dots+\alpha_j}\oplus\g{g}_{\alpha_{j-k+2}+\dots+\alpha_j}\oplus\dots\oplus\g{g}_{\alpha_{j-1}+\alpha_j}\oplus\g{g}_{\alpha_j}.
\end{equation}
Let $\Omega=\{\alpha_i: 1\leq i\leq j-k-1,\text{ or } j-k+1\leq i\leq j-1, \text{ or } j+2\leq i \leq n\}$.
Then the connected subgroup of $K_{\Lambda\setminus\{\alpha_j\}}$ with Lie algebra
\[
N_{\g{k}_{\Lambda\setminus\{\alpha_j\}}}(\g{v}) = \bigoplus_{\lambda\in\Sigma^+_\Omega}\g{k}_\lambda
\cong \g{so}_{j-k}\oplus\g{so}_k\oplus\g{so}_{n-j}
\]
acts transitively on the unit sphere of $\g{v}$. Moreover, $N_{\g{m}_{\Lambda\setminus\{\alpha_j\}}}(\g{n}_{\Lambda\setminus\{\alpha_j\},\g{v}})$ contains $\g{a}^{\Lambda\setminus\{\alpha_j\}}\oplus\g{n}^{\Lambda\setminus\{\alpha_j\}}$, which is the solvable part of the Iwasawa decomposition of $\g{s}_{\Lambda\setminus\{\alpha_j\}}$, and hence, $N^0_{M_{\Lambda\setminus\{\alpha_j\}}}(\g{n}_{\Lambda\setminus\{\alpha_j\},\g{v}})$ acts transitively on $B_{\Lambda\setminus\{\alpha_j\}}$. Thus, the subspace $\g{v}$ of $\g{n}_{\Lambda\setminus\{\alpha_j\}}^1$ given in~\eqref{eq:v_sln} satisfies both conditions (NC1)-(NC2) of the nilpotent construction method. However, the corresponding cohomogeneity one action on $M$ is orbit equivalent to a canonical extension. Indeed, on the one hand, the Lie algebra of the resulting group $H_{\Lambda\setminus\{\alpha_j\},\g{v}}$ that acts with cohomogeneity one on $M$ satisfies
\[
\g{h}_{\Lambda\setminus\{\alpha_j\},\g{v}}=N_{\g{l}_{\Lambda\setminus\{\alpha_j\}}}(\g{n}_{\Lambda\setminus\{\alpha_j\},\g{v}})\oplus\g{n}_{\Lambda\setminus\{\alpha_j\},\g{v}}\supset\g{a}\oplus\g{n}^{\Lambda\setminus\{\alpha_j\}}\oplus \g{n}_{\Lambda\setminus\{\alpha_j\},\g{v}}=\g{a}\oplus(\g{n}\ominus\g{v}).
\]
By dimension reasons, the singular orbit of the $H_{\Lambda\setminus\{\alpha_j\},\g{v}}$-action is also an orbit of the connected Lie subgroup of $AN$ with Lie algebra $\g{a}\oplus(\g{n}\ominus\g{v})$. On the other hand, let $\Psi=\{\alpha_{j-k+1},\dots,\alpha_{j-1}\}\subset\{\alpha_{j-k+1},\dots,\alpha_{j}\}=\Phi$. Consider the boundary component $B_{\Phi}\cong \s{SL}_{k+1}(\R)/\s{SO}_{k+1}$ and the cohomogeneity one action on $B_\Phi$ of the reductive subgroup $L_{\Psi,\Phi}^0\cong\s{SL}_k(\R)\times \R$ of $\s{SL}_{k+1}(\R)$ with Lie algebra $\g{l}_{\Psi,\Phi}$ (recall the notation at the end of~\S\ref{subsec:parabolic}). Then the Lie algebra $\g{l}_{\Psi,\Phi}^\Lambda$ of the group obtained by canonical extension of the action of $L_{\Psi,\Phi}^0\cong\s{SL}_k(\R)\times \R$ on $B_\Phi$ to $M$ has the following projection onto $\g{a}\oplus\g{n}$:
\[
(\g{l}_{\Psi,\Phi}^\Lambda)_{\g{a}\oplus\g{n}}=(\g{l}_{\Psi,\Phi}\oplus\g{a}_\Phi\oplus\g{n}_\Phi)_{\g{a}\oplus\g{n}}= \g{a}^\Phi\oplus\g{n}^\Psi\oplus\g{a}_\Phi\oplus\g{n}_\Phi
=\g{a}^\Phi\oplus(\g{n}^\Phi\ominus\g{v})\oplus\g{a}_\Phi\oplus\g{n}_\Phi=\g{a}\oplus(\g{n}\ominus\g{v}),
\]
where we have used $\g{n}^\Phi\ominus\g{n}^\Psi=\g{n}_{\Psi,\Phi}=\g{g}_{\alpha_{j-k+1}+\dots+\alpha_j}\oplus\dots\oplus\g{g}_{\alpha_j}=\g{v}$. By dimension reasons, the singular orbits of the cohomogeneity one actions of $H_{\Lambda\setminus\{\alpha_j\},\g{v}}$ and $(L_{\Psi,\Phi}^0)^\Lambda$ on $M$ coincide. Hence, both actions have the same orbits. We conclude that the action of $H_{\Lambda\setminus\{\alpha_j\},\g{v}}$ is orbit equivalent to one of the actions in the second row of the table of item (CE) in Theorem~\ref{th:sl_n}, namely the canonical extension of the action of $\s{SL}_{k}(\R)\times \R$ on~$B_{\{\alpha_{j-k+1},\dots,\alpha_{j}\}}$~to~$M$.
\end{proof}

\subsection{Cohomogeneity one actions on reducible symmetric spaces}\label{subsec:reducible}\hfill

Consider a symmetric space of noncompact type $M=M_1\times\dots\times M_s=G/K$, where each $M_i=G_i/K_i$, $i=1,\dots,s$, is irreducible, $G=\prod_{i=1}^s G_i$, and $K=\prod_{i=1}^s K_i$. Clearly, the root system of $\g{g}=\bigoplus_{i=1}^s\g{g}_i$ splits as the orthogonal disjoint union $\Sigma=\bigsqcup_{i=1}^s\Sigma_i$, where $\Sigma_i$ is the root system of $\g{g}_i$. Similarly, a set of simple roots for $\g{g}$ is given by $\Lambda=\bigsqcup_{i=1}^s\Lambda_s$, where $\Lambda_i\subset\Sigma_i^+$ is a set of simple roots for $\g{g}_i$, $i=1,\dots,s$. We will denote by $\g{k}_i\oplus\g{a}_i\oplus\g{n}_i$ the associated Iwasawa decomposition of $\g{g}_i$. Observe that $\g{a}_i=\g{a}^{\Lambda_i}$ and $\g{n}_i=\g{n}^{\Lambda_i}$, $i=1,\dots,s$. Of course, we have orthogonal direct sums $\g{k}=\bigoplus_{i=1}^s\g{k}_i$, $\g{a}=\bigoplus_{i=1}^s\g{a}_i$ and $\g{n}=\bigoplus_{i=1}^s\g{n}_i$.

\begin{proof}[Proof of Theorem~\ref{th:reducible}]
	We have to analyze the different cases arising in Theorem~\ref{th:classification} when applied to a reducible $M$. First note that cases (FH) and (CER) in Theorem~\ref{th:classification} correspond directly to cases (FH) and (CER) in Theorem~\ref{th:rank1}, respectively.
	
	An action of type (FS) in Theorem~\ref{th:classification} is induced by the Lie algebra $\g{a}\oplus(\g{n}\ominus\ell)$, where $\ell$ is a subspace of a simple root space $\g{g}_\beta$, $\beta\in\Lambda$, with $\dim\ell=1$. Then $\beta\in\Lambda_j$, for some $j\in\{1,\dots, s\}$, and hence 
	\[
	\g{a}\oplus(\g{n}\ominus\ell)=\g{a}\oplus\Bigl(\bigoplus_{\begin{subarray}{c}i=1\\i\neq j\end{subarray}}^s \g{n}_i\Bigr)\oplus(\g{n}_j\ominus\ell)
	=
	\Bigl(\bigoplus_{\begin{subarray}{c}i=1\\i\neq j\end{subarray}}^s (\g{a}_i\oplus\g{n}_i)\Bigr)\oplus(\g{a}_j\oplus(\g{n}_j\ominus\ell))
	=\g{a}_{\Lambda_j}\oplus\g{n}_{\Lambda_j}\oplus\g{h}_j,
	\]
	where $\g{h}_j=\g{a}_j\oplus(\g{n}_j\ominus\ell)$. Then the corresponding action is the canonical extension of the $H_j$-action on $B_{\Lambda_j}\cong M_j$ to $M$, where $H_j$ is the connected subgroup of $G_j$ with Lie algebra $\g{h}_j$. By Lemma~\ref{lemma:product}, such action is orbit equivalent to the action of $H_j\times \prod_{\begin{subarray}{c}i=1\\i\neq j\end{subarray}}^s G_i$, which corresponds to case (Prod) in the statement of Theorem~\ref{th:reducible}.
	
	Case (CEI) of Theorem~\ref{th:classification} concerns canonical extensions of cohomogeneity one actions with a totally geodesic singular orbit on an irreducible boundary component $B_\Phi$, for some connected subset $\Phi$ of roots in the Dynkin diagram. The corresponding Lie algebras are of the form $\g{h}_\Phi^\Lambda=\g{h}_\Phi\oplus\g{a}_\Phi\oplus\g{n}_\Phi$, for some maximal proper reductive subalgebra $\g{h}_\Phi$ of $\g{s}_\Phi$ whose corresponding Lie subgroup of $S_\Phi$ acts with cohomogeneity one on $B_\Phi$. In our setting, being $B_\Phi$ irreducible implies $\Phi\subset\Lambda_j$, for some $j\in\{1,\dots,s\}$. Hence
	\[
	\g{h}_\Phi\oplus\g{a}_\Phi\oplus\g{n}_\Phi=\g{h}_\Phi\oplus(\g{a}_{\Phi,\Lambda_j}\oplus\g{a}_{\Lambda_j})\oplus(\g{n}_{\Phi,\Lambda_j}\oplus\g{n}_{\Lambda_j})=(\g{h}_\Phi\oplus\g{a}_{\Phi,\Lambda_j}\oplus\g{n}_{\Phi,\Lambda_j})\oplus\g{a}_{\Lambda_j}\oplus\g{n}_{\Lambda_j},
	\]
	which means that the action is a composition of canonical extensions, firstly from $B_\Phi$ to $B_{\Lambda_j}\cong M_j$, and secondly from $B_{\Lambda_j}\cong M_j$ to $M$. Again by Lemma~\ref{lemma:product} we get that the action of the connected subgroup of $G$ with Lie algebra $\g{h}_\Phi^\Lambda$ has the same orbits as the action of $H_j\times \prod_{\begin{subarray}{c}i=1\\i\neq j\end{subarray}}^s G_i$ on $M$, where $H_j$ is the connected subgroup of $G$ with Lie algebra $\g{h}_\Phi\oplus\g{a}_{\Phi,\Lambda_j}\oplus\g{n}_{\Phi,\Lambda_j}$. This fits again into case (Prod) in the statement. 
	
	Finally, case (NC) of Theorem~\ref{th:classification} describes a nilpotent construction from a subspace $\g{v}$ of $\g{n}^1_{\Lambda\setminus\{\beta\}}$ for some $\beta\in \Lambda$, $\dim\g{v}\geq 2$. Let $j\in\{1,\dots, s\}$ such that $\beta\in\Lambda_j$. Then 
	\[
	\g{n}_{\Lambda\setminus\{\beta\}}^1 = \g{n}_{\Lambda\setminus\{\beta\},\Lambda_j}^1 \subset \g{n}_{\Lambda\setminus\{\beta\}} = \g{n}_{\Lambda\setminus\{\beta\},\Lambda_j} \subset \g{n}^{\Lambda_j}=\g{n}_j,
	\]
	since any root not spanned by $\Lambda\setminus\{\beta\}$ must be spanned by roots in $\Lambda_j$. Note that 
	$\g{l}_{\Lambda\setminus\{\beta\}}
	=\big(\bigoplus_{\begin{subarray}{c}i=1\\i\neq j\end{subarray}}^s\g{g}_i\bigr)\oplus\g{l}_{\Lambda_j\setminus\{\beta\},\Lambda_j}$.
	Hence the Lie algebra of the group $H_{\Lambda_j\setminus\{\beta\},\g{v}}$ built by nilpotent construction from the choice $\g{v}\subset \g{n}_{\Lambda\setminus\{\beta\}}^1=\g{n}_{\Lambda\setminus\{\beta\},\Lambda_j}^1$ is
	\begin{equation}\label{eq:productNC}
	N_{\g{l}_{\Lambda\setminus\{\beta\}}}(\g{n}_{\Lambda\setminus\{\beta\}}\ominus\g{v})\oplus(\g{n}_{\Lambda\setminus\{\beta\}}\ominus\g{v})
=
	\big(\bigoplus_{\begin{subarray}{c}i=1\\i\neq j\end{subarray}}^s\g{g}_i\bigr)
	\oplus N_{\g{l}_{\Lambda_j\setminus\{\beta\},\Lambda_j}}(\g{n}_{\Lambda_j\setminus\{\beta\},\Lambda_j}\ominus\g{v})\oplus(\g{n}_{\Lambda_j\setminus\{\beta\},\Lambda_j}\ominus\g{v}),
	\end{equation}
	where the two last direct addends of the right-hand term constitute a Lie subalgebra of $\g{g}_j$. (Indeed, it is not difficult to show that the associated connected subgroup of $G_j$ yields the cohomogeneity one action on $M_j$ obtained by nilpotent construction from the choice of $\g{v}$ as a subspace of $\g{n}_{\Lambda\setminus\{\beta\},\Lambda_j}^1$.) We conclude that the group $H_{\Lambda_j\setminus\{\beta\},\g{v}}$  splits nicely with respect to the decomposition of $G$, and hence corresponds again to an action of type (Prod).
	\end{proof}

\subsection{Cohomogeneity one actions on products of rank one spaces}\label{subsec:rank1}\hfill

In this subsection, $M=M_1\times\dots\times M_r$ will be a product of symmetric spaces of noncompact type and rank one, $M_i=G_i/K_i=\mathbb{F}_i \s{H}^{n_i}$, where $\mathbb{F}_i\in\{\R,\C,\H,\mathbb{O}\}$, $i=1,\dots, r$. We will use the other notations stated at the beginning of~\S\ref{subsec:reducible}. 
Moreover, we have $\Lambda=\{\alpha_1,\dots,\alpha_r\}$ with $\Lambda_i=\{\alpha_i\}$, $\Sigma^+=\Lambda\cup\{2\alpha_i:\mathbb{F}_i\neq\R\}$, and $\g{a}_i\cong\R$, for each $i=1,\dots, r$.

\begin{proof}[Proof of Theorem~\ref{th:rank1}]
	We will go through the three types of actions in Theorem~\ref{th:reducible}.
	
	First, assume we have an action of (Prod) type, that is, the action of a subgroup $H=H_j\times\prod_{\begin{subarray}{c}i=1\\i\neq j\end{subarray}}^r G_i$ of $G$, where $H_j$ is a  connected subgroup of $G_j$ acting with cohomogeneity one on the rank one space $M_j=B_{\{\alpha_j\}}=\mathbb{F}_j\s{H}^{n_j}$. By the classification of cohomogeneity one actions on rank one spaces~\cite{BT:tams}, \cite{DDR:crelle}, we can distiguish four cases:
	\begin{enumerate}
		\item $H_j$ produces a foliation of horospherical type. In this case, up to orbit equivalence, we can assume $\g{h}_j=\g{n}_j$ (since $\dim\g{a}_j=1$), and it is easy to realize that $H$ induces a foliation of horospherical type on $M$, with the same orbits as the action of the connected subgroup of $G$ with Lie algebra $(\g{a}\ominus\g{a}_j)\oplus\g{n}$. This corresponds to item (FH) in the statement.
		\item $H_j$ produces a foliation of solvable type. In this case we can assume $\g{h}_j=\g{a}_j\oplus(\g{n}_j\ominus\ell)$, for some one-dimensional subspace $\ell$ of $\g{g}_{\alpha_j}$. Similarly as above, one can see that the $H$-action is orbit equivalent to the action described in item (FS) of the statement.
		\item $H_j$ acts with cohomogeneity one and a totally geodesic singular orbit on $M_j$, which translates directly into type (CEI) of the statement.
		\item $H_j$ acts with cohomogeneity one and a non-totally geodesic singular orbit on $M_j$. In this case, $\g{h}_j$ can be taken of the form $N_{(\g{k}_j)_0}(\g{v})\oplus\g{a}_j\oplus(\g{n}_j\ominus\g{v})$, for some protohomogeneous subspace $\g{v}$ of $\g{g}_{\alpha_j}$ with $\dim\g{v}\geq 2$. This yields an action of type (NC) in the statement.
	\end{enumerate}

	Now, clearly an action of (FH) type in Theorem~\ref{th:reducible} corresponds to an action of the same type in Theorem~\ref{th:rank1}.
	
	Finally, actions of type (CER) are induced by groups $H_\Phi^\Lambda$ with Lie algebras of the type $\g{h}_\Phi\oplus\g{a}_\Phi\oplus\g{n}_\Phi$, where $\Phi\subset\Lambda$ determines a reducible rank two boundary component $B_\Phi$, which, in the current context, is of the form $B_\Phi=M_j\times M_k$, for $\Phi=\{\alpha_j,\alpha_k\}$,  $j,k\in\{1,\dots,r\}$, $j\neq k$. Hence, $\g{s}_{\{\alpha_j\}}=\g{g}_j$ and $\g{s}_{\{\alpha_k\}}=\g{g}_k$, so the Lie algebra of the group acting diagonally on $B_\Phi$ is $\g{h}_\Phi=\{X+\sigma X: X\in\g{g}_j\}=\g{g}_{j,k,\sigma}$, for some Lie algebra isomorphism $\sigma\colon\g{g}_j\to \g{g}_k$. Since $\Phi$ and $\Lambda\setminus\Phi$ are the sets of simple roots associated with $\g{g}_j\oplus\g{g}_k$ and $\bigoplus_{\begin{subarray}{c}i=1\\i\neq j,k\end{subarray}}\g{g}_i$, respectively, we can apply Lemma~\ref{lemma:product} to conclude that the $H_\Phi^\Lambda$-action is orbit equivalent to the action of the connected subgroup of $G$ with Lie algebra $\bigoplus_{\begin{subarray}{c}i=1\\i\neq j,k\end{subarray}}\g{g}_i\oplus\g{g}_{j,k,\sigma}$, as in item (CER) of the statement.
\end{proof}


\end{document}